\documentclass{amsart}

\usepackage{mathrsfs}
\usepackage{amsfonts}
\usepackage{indentfirst,latexsym,bm}
\usepackage{amssymb, amsthm}
\usepackage{amsopn}
\usepackage{epsfig}
\usepackage{latexsym}
\usepackage{graphicx}



\newtheorem{theorem}{Theorem}[section]
\newtheorem{lemma}[theorem]{Lemma}
\newtheorem{corollary}[theorem]{Corollary}
\newtheorem{proposition}[theorem]{Proposition}


\addtolength{\textwidth}{1.5cm} \setlength{\hoffset}{-.5cm}

\title[Intersections of homogeneous Cantor sets and
beta-expansions]{\bf Intersections of homogeneous Cantor sets and
beta-expansions }

\author[D. Kong]{Derong Kong}
\author[W. Li]{Wenxia Li}
\author[M. Dekking]{Michel Dekking}

\address{3TU Applied Mathematics Institute and Delft
University of Technology, Faculty EWI, P.O.~Box 5031, 2600 GA
Delft, The Netherlands.}

\address{Wenxia Li.  Department of Mathematics, East China Normal University, Shanghai 200241,
People's Republic of China}

\email{D.Kong@tudelft.nl,\quad wxli@math.ecnu.edu.cn,\quad F.M.Dekking@tudelft.nl}

\thanks{The second author is supported by the National Natural Science
Foundation of China no 10971069,  Shanghai Leading Academic
Discipline Project no B407 and Shanghai Education Committee Project no 11ZZ41.}

\date{\today}


\newcommand{\N}{\ensuremath{\infty}}
\newcommand{\ep}{\varepsilon}
\newcommand{\pz}{\pi_{N}}
\newcommand{\pf}{\pi_{\pm N}}
\newcommand{\ps}{\pi_{2N-1}}
\newcommand{\ON}{\Omega_{N}}
\newcommand{\OPN}{\Omega_{\pm N}}
\newcommand{\OTN}{\Omega_{2N-1}}
\newcommand{\UPN}{\mathcal{U}_{\beta,\pm N}}
\newcommand{\UTN}{\mathcal{U}_{\beta,2N-1}}
\newcommand{\SPN}{\mathcal{S}_{\beta,\pm N}}
\newcommand{\STN}{\mathcal{S}_{\beta,2N-1}}


\begin{document}

\begin{abstract}
 Let $\Gamma_{\beta,N}$ be the $N$-part homogeneous
Cantor set with $\beta\in(1/(2N-1),1/N)$. Any string $(j_\ell)_{\ell=1}^\N$ with $j_\ell\in\{0,\pm 1,\dots,\pm(N-1)\}$
 such that $t=\sum_{\ell=1}^\N j_\ell\beta^{\ell-1}(1-\beta)/(N-1)$ is called a code of $t$. Let $\UPN$ be
 the set of $t\in[-1,1]$ having a unique code, and let $\SPN$ be the set of $t\in\UPN$ which
 make the intersection $\Gamma_{\beta,N}\cap(\Gamma_{\beta,N}+t)$ a self-similar set. We characterize the set $\UPN$ in a geometrical
 and algebraical way, and give a sufficient and necessary condition for $t\in\SPN$.
 Using techniques from beta-expansions, we show that there is a critical point $\beta_c\in(1/(2N-1),1/N)$,
 which is a transcendental number, such that $\UPN$ has positive Hausdorff dimension if $\beta\in(1/(2N-1),\beta_c)$,
 and contains countably infinite many elements if
 $\beta\in(\beta_c,1/N)$. Moreover, there exists a second critical point $\alpha_c=\big[N+1-\sqrt{(N-1)(N+3)}\,\big]/2\in(1/(2N-1),\beta_c)$
  such that $\SPN$ has positive
 Hausdorff dimension if $\beta\in(1/(2N-1),\alpha_c)$, and contains countably infinite many
 elements if $\beta\in[\alpha_c,1/N)$.

\medskip

 \noindent{\it Keywords\/}: Homogeneous Cantor set; self-similarity;
iterated function system; critical point; beta-expansion; Thue-Morse
sequence.

\medskip

\noindent{\bf{MSC}:  28A80, 28A78}
\end{abstract}
\maketitle

\section{ Introduction}

Let $\{f_i(x)=r_i x+b_i\}_{i=1}^p$ be a family of
 functions on $\mathbb R$ with $0<|r_i|<1$. It is well known (cf.~\cite{Falconer}) that there exists a unique
 nonempty compact set $\Gamma \subseteq \mathbb R$ such that
 \begin{equation*}
 \Gamma=\bigcup_{i=1}^p f_i(\Gamma) .
 \end{equation*}
 In this case, $\Gamma$ is called the {\it self-similar set} generated by
the  {\it iterated function system} (IFS) $\{f_i(\cdot)\}_{i=1}^p$.

We will be interested in the self-similar set $\Gamma_{\beta,\Omega}$ generated by an IFS $\{\phi_d(\cdot): d\in\Omega\}$, where $\Omega$ is a finite set of integers, and
\begin{equation*}
 \phi_d(x)=\beta x+d(1-\beta)/(N-1), \quad x\in\mathbb R
\end{equation*}
for some $N\ge 2$ and $\beta\in(0,1/N)$.
It is well known that one can establish a surjective map
$\pi_{\Omega}:\Omega^\N\rightarrow\Gamma_{\beta, \Omega }$ by
letting
\begin{equation}\label{eq:pi}
\pi_\Omega(J)=\sum_{\ell=1}^\infty\frac{j_\ell\beta^{\ell-1}(1-\beta)}{N-1}
\end{equation}
for $J=(j_\ell)_{\ell=1}^\infty\in\Omega^\N$. The infinite string
$J$
 is called an \emph{$\Omega$-code} of $\pi_\Omega(J)$. Note that an element $x\in
 \Gamma _{\beta, \Omega }$ may have multiple $\Omega$-codes.
These $\Omega$-codes are closely related to the classical beta-expansions
 (cf.~\cite{Erdos,Glendinning Sidorov,Komornik  Loreti,Parry,Renyi,Sidorov,Vries Komornik}).
A sequence $(s_\ell)_{\ell=1}^\N\in\Omega^\N$ is called a
   \emph{$\beta$-expansion of $x$ with digit set $\Omega$}  if we can write
\begin{equation*}
     x=\sum_{\ell=1}^\infty s_\ell\beta^\ell,\quad s_\ell\in\Omega.
\end{equation*}

 Let $\ON:=\{0,1,\dots,N-1\}$. We simplify the notation $\Gamma
_{\beta , \ON }$ to $\Gamma _{\beta , N}$, so this set satisfies
\begin{equation*}
\Gamma _{\beta , N}=\bigcup _{d\in\ON}\phi _d(\Gamma _{\beta , N}).
\end{equation*}
 The set $\Gamma _{\beta , N}$ is
called the {\it $N$-part homogeneous Cantor set}. Thus
$\Gamma_{1/3,2}$ is the \emph{classical
 middle-third Cantor set} and $\Gamma_{\beta , 2}$ is the
 middle-$\alpha $ Cantor set with $\alpha =1-2\beta $.

 In terms of (\ref{eq:pi}), let $\pz:=\pi_{\ON}$. Thus we can rewrite
$\Gamma_{\beta,N}$ as
\begin{equation}\label{eq:expansion of Gamma}
\Gamma_{\beta, N}=\pz
\big(\ON^\N\big)=\left\{\sum_{\ell=1}^\infty\frac{j_\ell\beta^{\ell-1}(1-\beta)}{N-1}:j_\ell\in\ON\right\}.
\end{equation}
We consider the intersection of $\Gamma _{\beta , N}$ with its
translation by $t$. It is easy to check that
\begin{equation*}
\Gamma_{\beta,N}\cap(\Gamma_{\beta,N}+t)\ne\emptyset \quad\mbox{\rm
if and only if}\quad t\in \Gamma_{\beta,N}-\Gamma_{\beta,N}.
\end{equation*}
Here we denote for a real number $a$, and sets
$A,B\subseteq \mathbb R$, $aA:=\{ax: x\in A\}$, $A+B:=\{x+y: x\in A,
y\in B\}$, and $A+a:=A+\{a\}$.

It follows from Equation
(\ref{eq:expansion of Gamma}) that the difference set
$\Gamma_{\beta,N}-\Gamma_{\beta,N}$ can be written as
\begin{equation*}
\Gamma_{\beta,N}-\Gamma_{\beta,N}=\left\{\sum_{k=1}^\infty\frac{t_\ell\beta^{\ell-1}(1-\beta)}{N-1}:t_\ell\in\OPN\right\}
=\pf\big(\OPN^\infty\big)=\Gamma_{\beta,\OPN},
\end{equation*}
where $\OPN:=\ON-\ON=\{0,\pm 1,\dots,\pm(N-1)\}$ and
$\pf:=\pi_{\OPN}$. Since $\Omega_{2
N-1}=\{0,1,\dots,2N-2\}=\Omega_{\pm N}+N-1$, it is easy to see that
$ (t_\ell)_{\ell=1}^\infty$ is a $\Omega_{\pm N}$-code of
$t\in\Gamma_{\beta,N}-\Gamma_{\beta,N}$
 if and only if
$(t_\ell+N-1)_{\ell=1}^\infty$ is an $\beta$-expansion of
$(t+1)\beta(N-1)/(1-\beta)$ with digit set $\Omega_{2N-1}.$ Thus
some results and techniques from beta-expansions can be used to deal
with the difference set $\Gamma_{\beta,N}-\Gamma_{\beta,N}$.

In the past two decades, intersections of Cantor sets  have been
studied by several authors (cf.~\cite{Davis, Kenyon, Kraft1, Kraft2,
Kraft3,LiW2}). Recently, Deng et al.~\cite{Deng} gave
 a necessary and sufficient condition for  $t\in [-1,1]$ such that
  $\Gamma_{1/3,2}\cap(\Gamma_{1/3,2}+t)$ is a
 self-similar set. Their results were extended to the case
 $\Gamma_{\beta ,N}\cap(\Gamma_{\beta , N}+t)$ with
 $\beta\in(0,1/(2N-1)]$ by Li et al.~\cite{LiW1}, and to the
 case $\Gamma_{\beta,2}\cap (\Gamma_{\beta,2}+t)$ with
 $\beta\in(1/3,1/2)$ and $t$ having a unique $\Omega_{\pm 2}$-code by
 Zou et al.~\cite{Zou2}.

In this paper we consider arbitrary $N\ge 2$, and $\beta \in (1/(2N-1), 1/N)$.
Then Lebesgue a.a. $t\in\Gamma_{\beta,N}-\Gamma_{\beta,N}=[-1,1]$
have a continuum of distinct $\OPN$-codes. This gives the set
$\Gamma_{\beta,N}\cap(\Gamma_{\beta,N}+t)$ a more complicated
structure. We summarize the results in the following. In Section 2,
an algebraical and geometrical description of the set
\begin{equation*}
\UPN:=\big\{t\in [-1,1]: |\pf^{-1}(t)|=1\big\}
\end{equation*}
 (i.e., the set of $t\in[-1,1]$ having a unique $\OPN$-code) is given in Theorem \ref{th:1a},
 where throughout the paper $|A|$ denotes the number of members in
the set $A$. Section 3 is mainly devoted to investigating the
self-similar structure of $\Gamma _{\beta , N}\cap (\Gamma _{\beta ,
N}+t)$. Let
\begin{equation*}
\SPN:=\big\{t\in\UPN :\Gamma _{\beta ,
 N}\cap (\Gamma _{\beta , N}+t) \; \textrm{is a self-similar set}\big\}.
\end{equation*}
Theorem \ref{th:2a} gives a sufficient and necessary condition for
$t\in \SPN$. In Section 4, we study the set $\UPN$ for different
$\beta\in(1/(2N-1),1/N)$ culminating in Theorem \ref{th:critical
point}. Using techniques from beta-expansions,
 we obtain a \emph{critical point} $\beta_{c}\in(1/(2N-1),1/N)$
 such that $\UPN$ has positive Hausdorff dimension if $\beta\in(1/(2N-1),\beta_{c})$,
 and contains countably infinite many elements if
 $\beta\in(\beta_{c},1/N)$.
 We point out that the critical point $\beta_{c}$ is a transcendental number which is related to the famous
  Thue-Morse sequence (cf.~\cite{Komornik  Loreti}). In Section 5 we
  find the second critical point $\alpha_c=[N+1-\sqrt{(N-1)(N+3)}\,]/2\in(1/(2N-1),\beta_c)$ (see Theorem \ref{th:critical point
  S}) such that $\SPN$ has positive
 Hausdorff dimension if $\beta\in(1/(2N-1),\alpha_c)$, and contains countably infinite many
 elements if $\beta\in[\alpha_c,1/N)$.  In the following table, we give the critical points $\beta_{c}=\beta_c(N)$ and $\alpha_c=\alpha_c(N)$
  calculated for different integers $N$ by means of Mathematica.
\begin{center}
  \begin{tabular}{|c|c|c|c|c|c|c|c|c|}
  \hline
  $N$&2&3&4&5&6&7&8&9\\\hline
  $\beta_{c}\approx$ &0.39433&  0.27130 & 0.21004&  0.17221    &   0.14625&   0.12722  &  0.11265 &   0.10111
  \\\hline
 $\alpha_c\approx$&  0.38197& 0.26795 & 0.20871 & 0.17157  & 0.14590&0.12702
 &0.11252 &0.10102\\       \hline
  \end{tabular}
\end{center}
Thus for $\beta\in[\alpha_c,\beta_c)$, the set $\UPN$ (the set of
$t\in[-1,1]$ having a unique $\OPN$-code) has positive Hausdorff
dimension, but only countably many $t\in\UPN$ make the intersection
$\Gamma_{\beta,N}\cap(\Gamma_{\beta,N}+t)$ a self-similar set.

\section{Geometrical description of
$\Gamma_{\beta,N}\cap(\Gamma_{\beta,N}+t)$}

We say that the IFS $\{f_i(\cdot)\}_{i=1}^p$ satisfies the
\emph{open set condition }(OSC) if there exists a nonempty bounded
open set $O\subseteq \mathbb R$ such that $O\supseteq \bigcup
_{i=1}^p f_i(O)$, with a disjoint union on the right side. An IFS
$\{f_i(\cdot)\}_{i=1}^p$ is said to satisfy the \emph{strong separation
condition} (SSC) if the union $\Gamma=\bigcup_{i=1}^p f_i(\Gamma)$
is disjoint.

When $\beta\in(0,1/(2N-1))$ the IFS $\{\phi _d(\cdot):
d\in\OPN\}$ satisfies the SSC, so each point in
$\Gamma_{\beta,\OPN}$ has a unique $\OPN$-code. In case $\beta
=1/(2N-1)$, the IFS $\{\phi_d(\cdot):d\in\OPN\}$ fails to satisfy the
SSC but satisfies the OSC, so each point has a unique $\OPN$-code
except for countably many points having two $\OPN$-codes. However,
for the case $\beta\in(1/(2N-1), 1/N)$ the IFS $\{\phi _d(\cdot):
d\in\OPN\}$ fails to satisfy the OSC and $\Gamma_{\beta,\OPN}=[-1,1]$. In this case,
Lebesgue a.a.~$t\in[-1,1]$ have a continuum of distinct $\OPN$-codes
(cf.~\cite{Sidorov}). This gives
$\Gamma_{\beta,N}\cap(\Gamma_{\beta,N}+t)$ a more complicated
structure, since it follows (\cite{LiW2}) that for
$t\in\Gamma_{\beta,\OPN}$
\begin{equation}\label{eq:1}
\Gamma_{\beta,N}\cap(\Gamma_{\beta,N}+t)=\bigcup_{\tilde{t}}\pz\left(\prod_{\ell=1}^\infty
D_{\ell,\tilde{t}}\right)
\end{equation}
where the union is taken over all $\OPN$-codes of $t$, and for each
code $\tilde{t}=(t_\ell)_{\ell=1}^\infty \in \OPN^{\N}$
\begin{equation*}
 D_{\ell,\tilde{t}}=\ON\cap(\ON+t_\ell)=\{0,1,\dots, N-1\}
\cap(\{0,1,\dots, N-1\}+t_\ell).
\end{equation*}
Moreover, $\Gamma_{\beta,N}\cap(\Gamma_{\beta,N}+t)$  has the
following properties:

\textbf{(P1)} { the union on the right side of {\rm (\ref{eq:1})}
consists of pairwise
  disjoint sets};

\textbf{(P2)} { for each $\OPN$-code
$\tilde{t}=(t_\ell)_{\ell=1}^\infty$ of $t$, we have
\begin{equation*}
1+t-\pz\left(\prod_{\ell=1}^\infty
D_{\ell,\tilde{t}}\right)=\pz\left(\prod_{\ell=1}^\infty
  D_{\ell,\tilde{t}}\right),
\end{equation*}
  i.e., $\pz(\prod_{\ell=1}^\infty D_{\ell,\tilde{t}})$
  is centrally  symmetric. Furthermore, $1+t- \Gamma_{\beta,N}\cap(\Gamma_{\beta,N}+t)=
  \Gamma_{\beta,N}\cap(\Gamma_{\beta,N}+t)$.}

These properties can be obtained as follows. Let
$(t_\ell)_{\ell=1}^\infty$ be  a $\OPN$-code
 of $t$ and let $J=(j_\ell)_{\ell=1}^\infty \in \ON^\N$.
 If
\begin{equation*}
 \pz(J)=\sum _{\ell=1}^\infty \frac{j_\ell\beta ^{\ell-1}(1-\beta )}{N-1}\in \pz
 \left (\prod _{\ell=1}^\infty \ON \cap(\ON+t_\ell)\right
),
 \end{equation*}
 then $(j_\ell-t_\ell)_{\ell=1}^\infty \in
\ON^\N$. Note that the IFS $\{\phi_d(\cdot): d\in\ON\}$ satisfies the
SSC (since $\beta<1/N$). This implies that each point
$x\in\Gamma_{\beta,N}$ has a unique $\ON$-code. Thus
$(j_\ell-t_\ell)_{\ell=1}^\infty $ is the unique $\ON$-code of
$\pz(J)-t$, implying (P1). In addition, one can check that for each
$ \ell\ge 1$,
\begin{equation*}
 N-1+t_\ell-\ON \cap(\ON+t_\ell) = \ON
\cap(\ON+t_\ell),
\end{equation*}
 implying (P2).

Let $\Omega $ be a nonempty finite subset of $\mathbb Z$. Denote by
$\ep$ the empty word and put $\Omega^0=\{\ep\}$. For
$I\in\bigcup_{\ell=0}^\N\Omega^\ell$ and
$J\in\Omega^\N\cup\bigcup_{\ell=0}^\N\Omega^\ell$,
 let $IJ\in\Omega^\N\cup\bigcup_{\ell=0}^\N\Omega^\ell$ be the concatenation of $I$ and
 $J$. So in particular $\ep J=J$. For a nonnegative integer $k$ and a finite string $I\in\bigcup_{\ell=1}^\N\Omega^\ell$,
  let $I^{k}:=\overbrace{I\dots I}^k$ be the $k$ times
repeating of $I$ and $I^\N:=III\dots\in\Omega^\N$ be the infinite
repeating of $I$. In particular, $I^0=\ep$. For
$J=(j_\ell)_{\ell=1}^\infty\in\Omega^\N$ and $k\in\mathbb{N}$, let
$J|_k=(j_\ell)_{\ell=1}^k\in\Omega^k$. We define the algebraic
difference between two infinite strings $I=(i_\ell)_{\ell=1}^\N,
J=(j_\ell)_{\ell=1}^\N\in\Omega^\N$ by
$I-J=(i_\ell-j_\ell)_{\ell=1}^\N$, and for a positive integer $k$
let $I|_k-J|_k=(I-J)|_{k}=(i_\ell-j_\ell)_{\ell=1}^k$.

Given $\beta\in(1/(2N-1),1/N)$ and $t\in [-1,1]$, for an integer
$d\in\mathbb{Z}$, let
\begin{equation*}
\psi_d(x)=\beta
x+d(1-\beta)/(N-1)+t(1-\beta),\quad x\in\mathbb{R}.
\end{equation*}
 Then
\begin{equation*}
\Gamma_{\beta,N}+t=\bigcup_{d\in\ON}\psi_d(\Gamma_{\beta,N}+t).
\end{equation*}
For $J=(j_\ell)_{\ell=1}^k \in \ON^k$ with $k\in\mathbb{N}$, let
$\psi_J:=\psi _{j_1}\circ \cdots \circ \psi _{j_k}$ (the same for
$\phi _J$). For a real number $x$, it is easy to see that
$\psi_d(t+x)=\phi_d(x)+t$ for all $d\in\ON$. Thus by induction we
obtain
\begin{equation}\label{eq:psi-phi}
  \psi_J(t+x)=\phi_J(x)+t\quad\mbox{for all}~J\in\bigcup_{\ell=1}^\N\ON^\ell,~ x\in\mathbb{R}.
\end{equation}
The sets $\Gamma_{\beta,N}$ and $\Gamma_{\beta,N}+t$ can be
represented in a geometrical way as (cf.~\cite{Falconer})
\begin{equation*}
\Gamma_{\beta,N}=\bigcap_{k=1}^\infty\bigcup_{J\in
\ON^k}\phi_J([0,1])\quad\mbox{and}\quad
\Gamma_{\beta,N}+t=\bigcap_{k=1}^\infty\bigcup_{J\in
\ON^k}\psi_J([t,1+t]).
\end{equation*}
We call $\phi_J([0,1]),\psi_J([t,1+t])$ with $J\in \ON^k$ the\emph{
$k$-level components} of $\Gamma_{\beta,N}$ and
$\Gamma_{\beta,N}+t$, respectively. The $1$-level components of
$\Gamma_{\beta,N}$ are $\phi_0([0,1]), \phi_1([0,1]), \dots
,\phi_{N-1}([0,1])$ of length $\beta $. All gaps between them have
the same length $(1-\beta )/(N-1)-\beta $. The left endpoint of
$\phi_0([0,1])$ is $0$ and the right endpoint of $\phi_{N-1}([0,1])$
is $1$. For a $\ell$-level component $\phi_J([0,1]), J\in \ON^\ell$,
the $(\ell+1)$-level components $\phi_{J0}([0,1]), \phi_{J1}([0,1]),
\dots , \phi_{J(N-1)}([0,1])$ have the same length $\beta ^{\ell+1}$
and all gaps (called $(\ell+1)$-level gaps)
 between them have the same length $\beta
^\ell(1-\beta )/(N-1)-\beta ^{\ell+1}$. The left endpoint of
$\phi_{J0}([0,1])$ coincides with the left endpoint of
$\phi_{J}([0,1])$ and the right endpoint of $\phi_{J(N-1)}([0,1])$
coincides with the right endpoint of $\phi_{J}([0,1])$. The
requirement $\beta \in (1/(2N-1), 1/N)$ implies the following simple
properties:

 \textbf{(P3)} the length of a $k$-level gap is less than the length
of a $k$-level component, i.e.,
\begin{equation*}
 \beta
^{k-1}(1-\beta )/(N-1)-\beta ^{k}<\beta ^k;
\end{equation*}

 \textbf{(P4)} if $\phi _I([0,1])\cap \psi _J([t, t+1])\ne \emptyset
$ for  $I,J \in  \ON^k$ with $k\in\mathbb{N}$, then
\begin{equation*}
\phi _I([0,1])\cap \psi _J([t, 1+t])\cap
\Gamma_{\beta,N}\cap (\Gamma_{\beta,N}+t)\ne \emptyset.
\end{equation*}

\begin{figure}[ht]
\begin{center}
 \includegraphics[width=425pt]{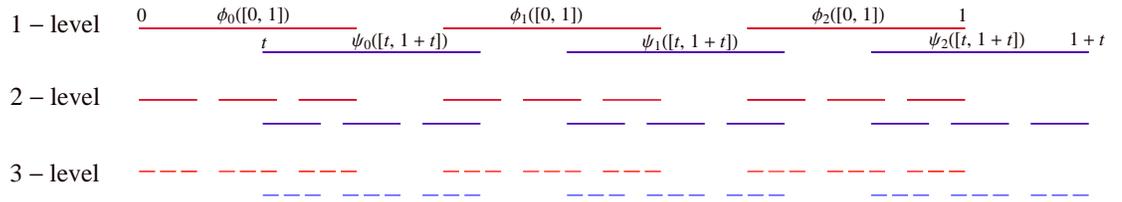}\\
\vspace{-0.4cm}\caption{{\footnotesize $N=3,~  \beta=0.28, ~t=0.19$.
The $1$-level components of $\Gamma_{\beta,N}$ are $\phi_0([0,1]),~
\phi_1([0,1])$ and $ \phi_2([0,1])$.
  The $1$-level
  components of $\Gamma_{\beta,N}+t$ are $\psi_0([t,1+t]),~ \psi_1([t,1+t])$ and $ \psi_2([t,1+t])$. Here $\mathcal{N}_t(0)=\{\psi_0([t,1+t])\}$,
  $\mathcal{N}_t(1)=\{\psi_0([t,1+t]),~ \psi_1([t,1+t])\}$ and $\mathcal{N}_t(2)=\{\psi_1([t,1+t]),~\psi_2([t,1+t])\}$.}\label{fig:1}}
  \end{center}
\end{figure}

For $J\in \ON^k$ with $k\in \mathbb N$, the \emph{neighborhood} of
$\phi_J([0,1])$ with respect to the $k-$level components of
$\Gamma_{\beta,N}+t$ is defined as (see Figure \ref{fig:1})
\begin{equation*}
\mathcal{N}_{t}(J):=\Big\{\psi_I([t,1+t]):I\in
\ON^k,~\phi_J([0,1])\cap\psi_I([t,1+t])\ne\emptyset\Big\}.
\end{equation*}
 The set
$\mathcal{N}_{t}(J)$ may be empty and $|\mathcal{N}_{t}(J)|\in
\{0,1,2\}$. For $k\ge 1$ let
\begin{equation*}
\Lambda_k:=\Big\{J\in \ON^k:|\mathcal{N}_{t}(J)|\ge
1\Big\}\quad\mbox{and}\quad\Lambda:=\Big\{J\in
\ON^\N:J|_k\in\Lambda_k \;\textrm{for all}\; k\in\mathbb {N}\Big\}.
\end{equation*}
Then  $\Gamma_{\beta,N}\cap(\Gamma_{\beta,N}+t) $ can be rewritten
in a geometrical way as
\begin{equation*}
\Gamma_{\beta,N}\cap(\Gamma_{\beta,N}+t)=\pz(\Lambda)=\bigcap_{k=1}^\infty\bigcup_{J\in\Lambda_k}\phi_J([0,1]).
\end{equation*}
A set $D\subseteq \ON$ is said to be \emph{consecutive} if
$D=\ON\cap (\ON+d)$ for some $d\in \OPN$.
\begin{proposition}\label{prop:1}
 Given $N\ge 2$ and $\beta\in(1/(2N-1),1/N)$, let $t\in [-1,1]$. If $|\mathcal{N}_{t}(J)|\le 1$ for all
  $J\in \bigcup_{\ell=1}^\N\ON^\ell$, then
\begin{equation*}
\Lambda=\prod_{\ell=1}^\infty D_\ell
\end{equation*}
  with each $D_\ell$ consecutive.
\end{proposition}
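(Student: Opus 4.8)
The plan is to convert the geometric neighbour condition into an arithmetic one and then to reduce the whole statement to the \emph{uniqueness} of an $\OPN$-code of $t$. Writing a word $A=(a_\ell)_{\ell=1}^k\in\ON^k$ and using $\psi_A(t+x)=\phi_A(x)+t$, both relevant components have length $\beta^k$, namely $\phi_A([0,1])=[c_A,c_A+\beta^k]$ and $\psi_A([t,1+t])=[c_A+t,c_A+t+\beta^k]$ with $c_A=\frac{1-\beta}{N-1}\sum_{\ell=1}^k a_\ell\beta^{\ell-1}$. Hence, for $A,B\in\ON^k$, the component $\psi_B([t,1+t])$ belongs to $\mathcal N_t(A)$ if and only if
\begin{equation*}
\Big|\,t-\tfrac{1-\beta}{N-1}\sum_{\ell=1}^k (a_\ell-b_\ell)\beta^{\ell-1}\Big|\le\beta^k.\tag{$\ast$}
\end{equation*}
Since $a_\ell-b_\ell\in\OPN$ and $\Gamma_{\beta,\OPN}=[-1,1]$, criterion $(\ast)$ says precisely that the finite word $(a_\ell-b_\ell)_{\ell=1}^k$ is a length-$k$ prefix of some $\OPN$-code of $t$. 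The plan is then: (i) show the hypothesis forces $t$ to have a unique $\OPN$-code $\tilde t=(t_\ell)$; and (ii) identify $\Lambda$ with $\prod_\ell D_\ell$ where $D_\ell=\ON\cap(\ON+t_\ell)$, consecutive by definition.

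First I would prove uniqueness. Suppose on the contrary that $t$ has two $\OPN$-codes $(t_\ell)$ and $(t_\ell')$ agreeing for $\ell<m$ and with, say, $t_m>t_m'$. The decisive point is that $\beta<1/N$ makes the first discrepancy minimal: from $t_m-t_m'=\sum_{\ell>m}(t_\ell'-t_\ell)\beta^{\ell-m}$ and $|t_\ell'-t_\ell|\le 2(N-1)$ we get $t_m-t_m'\le 2(N-1)\beta/(1-\beta)<2$, so $t_m=t_m'+1$. A short case analysis on the sign of $t_m'$ (using $t_m,t_m'\in\OPN$) then gives $\ON\cap(\ON+t_m)\cap(\ON+t_m')\ne\emptyset$. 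Picking $j_\ell\in\ON\cap(\ON+t_\ell)$ for $\ell<m$ (nonempty since $t_\ell\in\OPN$) and $j_m$ in this triple intersection, criterion $(\ast)$ shows that $\phi_{J|_m}([0,1])$, with $J|_m=j_1\cdots j_m$, meets both components $\psi_{I}([t,1+t])$ and $\psi_{I'}([t,1+t])$, where $I=(j_\ell-t_\ell)_{\ell=1}^m$ and $I'=(j_\ell-t_\ell')_{\ell=1}^m$ lie in $\ON^m$. As these two words differ in their $m$-th letter, they give distinct components (distinct $\ON$-words yield distinct components since $\beta<1/N$), whence $|\mathcal N_t(J|_m)|\ge2$, contradicting the hypothesis. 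This uniqueness step—both the inequality pinning the first discrepancy to $1$ (where $\beta<1/N$ enters) and the combinatorial nonemptiness of the triple intersection—is the main obstacle.

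Granting uniqueness, let $\tilde t=(t_\ell)$ be the unique code and set $D_\ell=\ON\cap(\ON+t_\ell)$; since $t_\ell\in\OPN$, each $D_\ell$ is nonempty and consecutive. For $\Lambda\subseteq\prod_\ell D_\ell$, take $J\in\Lambda$; then $|\mathcal N_t(J|_k)|=1$ for every $k$ (it is $\ge1$ because $J\in\Lambda$, and $\le1$ by hypothesis). Because $\phi_{J|_{k+1}}([0,1])\subseteq\phi_{J|_k}([0,1])$, any component meeting $\phi_{J|_{k+1}}$ sits inside a component meeting $\phi_{J|_k}$, so the unique neighbouring words are nested and assemble into a single $I=(i_\ell)\in\ON^\N$. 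Applying $(\ast)$ at each level and letting $k\to\infty$ shows that $(j_\ell-i_\ell)_\ell$ is an $\OPN$-code of $t$, hence equals $\tilde t$ by uniqueness; thus $i_\ell=j_\ell-t_\ell\in\ON$, i.e. $j_\ell\in D_\ell$ for all $\ell$. Conversely, given $J$ with every $j_\ell\in D_\ell$, put $i_\ell=j_\ell-t_\ell\in\ON$; the tail bound $\big|\tfrac{1-\beta}{N-1}\sum_{\ell>k}t_\ell\beta^{\ell-1}\big|\le\beta^k$ together with $(\ast)$ yields $\psi_{I|_k}([t,1+t])\in\mathcal N_t(J|_k)$ for all $k$, so $J\in\Lambda$. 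This gives $\Lambda=\prod_\ell D_\ell$ with each $D_\ell$ consecutive, as required.
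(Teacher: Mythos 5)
Your proof is correct, but it takes a genuinely different route from the paper's. The paper argues by a purely geometric induction on levels: property (P3) (each gap is shorter than a component) forces the set $D_{k+1}=\{d\in\ON:|\mathcal{N}_{t}(Jd)|=1\}$ to be nonempty, the hypothesis $|\mathcal{N}_t(\cdot)|\le 1$ forces it to be consecutive, and the homogeneous spacing of level-$k$ components makes $D_{k+1}$ independent of the chosen $J\in\Lambda_k$; crucially, uniqueness of the $\OPN$-code of $t$ is never invoked there --- in the paper's logical order, that uniqueness (the implication (B)$\Rightarrow$(A) of Theorem \ref{th:1a}) is \emph{deduced from} this proposition together with (\ref{eq:1}). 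You invert this order: you first show the neighbourhood hypothesis forces the code to be unique, via a direct contrapositive argument (two codes must first differ by exactly $1$ because $\beta<1/N$, and the nonempty triple intersection $\ON\cap(\ON+t_m)\cap(\ON+t_m')$ then produces a single $m$-level component of $\Gamma_{\beta,N}$ meeting two distinct components of $\Gamma_{\beta,N}+t$), and only then read off $\Lambda=\prod_{\ell}\big(\ON\cap(\ON+t_\ell)\big)$ from the unique code via your arithmetic criterion $(\ast)$, whose two directions correctly use $\beta<1/N$ (disjointness of components) and $\beta>1/(2N-1)$ (so that $\Gamma_{\beta,\OPN}=[-1,1]$ and tails fill $[-\beta^k,\beta^k]$). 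Both arguments are sound. Your route buys two things: it yields the explicit identification $D_\ell=\ON\cap(\ON+t_\ell)$ immediately (the paper only obtains this later, inside the proof of (B)$\Rightarrow$(C) of Theorem \ref{th:1a}), and it sidesteps the paper's tersely justified claim that $D_{k+1}$ does not depend on $J\in\Lambda_k$, since in your setup the product structure flows from the single code rather than from an offset-propagation argument. The price is redundancy relative to the paper's architecture: you have in effect re-proved (B)$\Rightarrow$(A) of Theorem \ref{th:1a} from scratch, whereas the paper gets that implication for free as a corollary of the proposition.
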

\begin{proof}
 The
condition $\beta\in(1/(2N-1),1/N)$ implies (P3), i.e., all gaps
between the intervals $\phi _d([0,1]), d\in\ON$ have the same length
strictly less than $\beta $, the length of $\phi _d([0,1])$ (see
Figure \ref{fig:1}). Thus since $t\in[-1,1]$, either
$|\mathcal{N}_t(0)|=1$ or $|\mathcal{N}_t(N-1)|=1$, which implies
that
\begin{equation*}
 D_1:=\Big\{d\in \ON:
|\mathcal{N}_{t}(d)|=1\Big\}\ne \emptyset.
\end{equation*}
It follows from $|\mathcal{N}_t(d)|\le 1$ for all $d\in\ON$ that
$D_1$ is consecutive and $\Lambda _1=D_1$.

Now for $k\in \mathbb N$ let the consecutive sets $D_1, \dots, D_k$
be chosen such that $\Lambda_k=\prod_{\ell=1}^k D_\ell$. Fix a
$J\in\Lambda_k$ and take
\begin{equation*}
D_{k+1}:=\Big\{d\in \ON:
|\mathcal{N}_{t}(J d)|=1\Big\}.
\end{equation*}
Then $D_{k+1}$ is nonempty by (P3), and is consecutive by the same
argument as above. Note that $D_{k+1}$ is independent of the choice
of $J\in\Lambda_k$. Thus $\Lambda_{k+1}=\prod_{\ell=1}^{k+1} D_\ell$
which implies $\Lambda=\prod_{\ell=1}^\infty D_\ell$ by induction.
\end{proof}
 The following theorem characterizes the set of $t\in[-1,1]$ having a unique $\OPN$-code from
a geometrical and an algebraical aspect.
\begin{theorem}\label{th:1a}
 Given $N\ge 2$ and $\beta\in (1/(2N-1),1/N)$, let $\UPN$ be the set of $t\in[-1,1]$ which have a unique
$\OPN$-code. Then the
  following conditions are equivalent.

{\rm (A)} $t\in\UPN$;

{\rm (B)} $|\mathcal{N}_{t}(J)|\le1$ for all
$J\in\bigcup_{\ell=1}^\N\ON^\ell$;

{\rm (C)} $t$ has a $\OPN$-code $(t_\ell)_{\ell=1}^\infty$ such
  that for all $k\ge 1$
  \begin{equation}\label{tunique}
\left\{
\begin{array}{lcr}
  \sum_{\ell=1}^\infty t_{k+\ell}\beta^\ell<\frac{1-N \beta}{1-\beta},& \mbox{if}&t_k<N-1\\
  \sum_{\ell=1}^\infty t_{k+\ell}\beta^\ell>-\frac{1-N \beta}{1-\beta}, &\mbox{if}& t_k>1-N.
\end{array}
\right.
\end{equation}

\end{theorem}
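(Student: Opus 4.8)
The plan is to treat the equivalence (A)$\Leftrightarrow$(C) as a purely arithmetic statement about $\OPN$-codes and to link it to the geometric condition (B) through Proposition \ref{prop:1}, the decomposition (\ref{eq:1}) and the disjointness (P1). Concretely I will establish (A)$\Leftrightarrow$(C) directly, together with (A)$\Rightarrow$(B) and (B)$\Rightarrow$(A); these three facts yield all the equivalences. The guiding observation is that, after cancelling the common factor $(1-\beta)/(N-1)$, a string $(t_\ell)$ is a code of $t$ exactly when $\sum_\ell t_\ell\beta^{\ell-1}$ has a fixed value, and that the threshold in (\ref{tunique}) is $\frac{1-N\beta}{1-\beta}=1-\frac{(N-1)\beta}{1-\beta}$, i.e.\ precisely the deficit created when one raises a single digit by $1$ and simultaneously pushes the entire tail to its extreme. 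Because $\beta>1/(2N-1)$ forces $\Gamma_{\beta,\OPN}=[-1,1]$, every value in the range of a tail is actually attained, and it is this surjectivity that makes single-digit perturbations both necessary and sufficient to detect non-uniqueness.

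For (C)$\Rightarrow$(A), suppose $(t_\ell)$ satisfies (\ref{tunique}) and let $(t_\ell')$ be any other code, first differing from $(t_\ell)$ at some index $k$; without loss of generality $t_k'>t_k$, so $t_k'\ge t_k+1$ and hence $t_k\le N-2<N-1$. Cancelling the common prefix gives $(t_k'-t_k)\beta^{k-1}=\sum_{\ell>k}(t_\ell-t_\ell')\beta^{\ell-1}$. Bounding the left side below by $\beta^{k-1}$ and the right side above via $t_\ell'\ge-(N-1)$, then dividing by $\beta^{k-1}$, yields $\sum_{m=1}^\infty t_{k+m}\beta^m\ge\frac{1-N\beta}{1-\beta}$, contradicting the first line of (\ref{tunique}); the case $t_k'<t_k$ is symmetric and contradicts the second line. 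Hence the code is unique and $t\in\UPN$. Note this direction does not use $\beta>1/(2N-1)$.

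For (A)$\Rightarrow$(C) I argue by contraposition. If the code $(t_\ell)$ of $t$ violates (\ref{tunique}), say $t_k<N-1$ while $\sum_{m\ge1}t_{k+m}\beta^m\ge\frac{1-N\beta}{1-\beta}$, I build a second code. Keep the digits before $k$, set $t_k'=t_k+1$, and seek a tail $(t_{k+m}')_{m\ge1}\subseteq\OPN$ with $\sum_{m\ge1}t_{k+m}'\beta^m=\sum_{m\ge1}t_{k+m}\beta^m-1$. The target lies in $\big[-\tfrac{(N-1)\beta}{1-\beta},\tfrac{(N-1)\beta}{1-\beta}\big]$ precisely because the hypothesis forces it to be $\ge-\tfrac{(N-1)\beta}{1-\beta}$, and since $\beta>1/(2N-1)$ gives $\Gamma_{\beta,\OPN}=[-1,1]$, every value of this interval is realized by some tail. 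The resulting string is a genuine $\OPN$-code of $t$ distinct from $(t_\ell)$, so $t\notin\UPN$; the symmetric case handles the second inequality.

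Finally I incorporate (B). For (A)$\Rightarrow$(B) I use the contrapositive: if $|\mathcal{N}_t(J)|\ge2$ for some $J\in\ON^k$, then $\phi_J([0,1])$ meets $\psi_I([t,1+t])$ and $\psi_{I'}([t,1+t])$ for distinct $I,I'\in\ON^k$; by (P4) each intersection contains a point of $\Gamma_{\beta,N}\cap(\Gamma_{\beta,N}+t)$, and reading off the unique $\ON$-codes (available since $\beta<1/N$ gives the SSC) of such a point and of its $t$-translate produces two $\OPN$-codes of $t$ whose first $k$ digits are $J-I$ and $J-I'$; as $I\ne I'$ these differ, so $t\notin\UPN$. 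For (B)$\Rightarrow$(A), Proposition \ref{prop:1} gives $\Lambda=\prod_\ell D_\ell$ with each $D_\ell=\ON\cap(\ON+t_\ell)$ consecutive; a spacing argument (the level-$k$ components of $\Gamma_{\beta,N}$ and of $\Gamma_{\beta,N}+t$ are arithmetic progressions of equal step, so the matching neighbour shifts by the constant $t_\ell$ at each level) shows $(t_\ell)$ is itself a code of $t$. Then in the disjoint decomposition (\ref{eq:1}) the piece indexed by $(t_\ell)$ already exhausts $\pz(\prod_\ell D_\ell)$, so any further code would contribute a nonempty piece that is both contained in and disjoint from this set, which is impossible; hence the code is unique. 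I expect the main obstacle to be the interdependence inside (A)$\Leftrightarrow$(C): one must check that the threshold $\frac{1-N\beta}{1-\beta}$ is sharp and that testing only single-digit changes suffices, which rests on the full-interval property $\Gamma_{\beta,\OPN}=[-1,1]$ and is exactly where $\beta>1/(2N-1)$ enters. The verification that the string read off from Proposition \ref{prop:1} is a bona fide code of $t$ is the secondary technical point.
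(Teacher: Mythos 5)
Your proposal is correct, but it reaches condition (C) by a genuinely different route than the paper. The paper never argues (A)$\Leftrightarrow$(C) directly: it proves (A)$\Rightarrow$(B) geometrically, (B)$\Rightarrow$(A) via Proposition \ref{prop:1} and the disjointness (P1), and then handles (C) entirely through geometry --- (B)$\Rightarrow$(C) by computing the distance $\beta^{-k}|\psi_{J^*|_k}(t)-\phi_{J|_k}(0)|$ between left endpoints of matched $k$-level components, and (C)$\Rightarrow$(B) by an induction establishing the explicit neighborhood formula for $\mathcal{N}_t(J)$. You instead prove (A)$\Leftrightarrow$(C) arithmetically: uniqueness from \eqref{tunique} by cancelling a common prefix and bounding the tails (your (C)$\Rightarrow$(A), which needs no lower bound on $\beta$ at all), and conversely by perturbing a single digit $t_k\mapsto t_k+1$ and re-expanding the tail, using that for $\beta>1/(2N-1)$ the tails sweep out the full interval $\bigl[-\tfrac{(N-1)\beta}{1-\beta},\tfrac{(N-1)\beta}{1-\beta}\bigr]$ (equivalently $\Gamma_{\beta,\OPN}=[-1,1]$). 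This buys a cleaner isolation of where the hypothesis $\beta>1/(2N-1)$ enters and connects \eqref{tunique} directly to the unique-expansion criteria exploited later (Theorem \ref{th:A}); the paper's route buys the sharper geometric byproduct \eqref{eq:7}, i.e.\ the exact identification of the unique neighbor at every level. Your (A)$\Rightarrow$(B) is also a slight streamlining: one $\phi_J$ meeting two components $\psi_I,\psi_{I'}$ immediately yields codes with distinct prefixes $J-I\ne J-I'$, whereas the paper first reduces to two adjacent components of $\Gamma_{\beta,N}$ sharing one neighbor. The one soft spot is the step you yourself flag in (B)$\Rightarrow$(A): that the string $(t_\ell)$ read off from the consecutive sets $D_\ell$ of Proposition \ref{prop:1} is actually a code of $t$. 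This is genuinely needed (without it, the disjointness argument alone cannot exclude a second code, since an infinite product can in general be a disjoint union of several products), but your spacing argument does close it: under (B) all intersecting pairs of $k$-level components inside matched parents have the same digit difference $t_k$, and the left endpoints of matched components are within $\beta^k$ of each other, so $t=\sum_\ell t_\ell\beta^{\ell-1}(1-\beta)/(N-1)$ in the limit. Note the paper is equally terse at exactly this point (``it follows from \eqref{eq:1} that $t$ has a unique $\OPN$-code''), so this is a shared, fillable gap rather than a defect of your approach.
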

\begin{proof}
 ${\rm (A)}\Rightarrow{\rm (B)}$. Suppose that $|\mathcal{N}_{t}(J)|=2$
for some $J=(j_\ell)_{\ell=1}^k\in\ON^k$ with $k\ge 1$. Then either
$|\mathcal{N}_{t}(J|_{k-1}0)|=2$ or
$|\mathcal{N}_{t}(J|_{k-1}(N-1))|=2$. Without loss of generality,
let $|\mathcal{N}_{t}(J|_{k-1}0)|=2$. Then there exists $d\in \ON$
such that $|\mathcal{N}_{t}(J|_{k-1}d)|=1$ by the geometric
structure of $\Gamma_{\beta,N}\cap(\Gamma_{\beta,N}+t)$ (see Figure
\ref{fig:2}).
\begin{figure}[ht]
\centering{  \includegraphics[width=425pt]{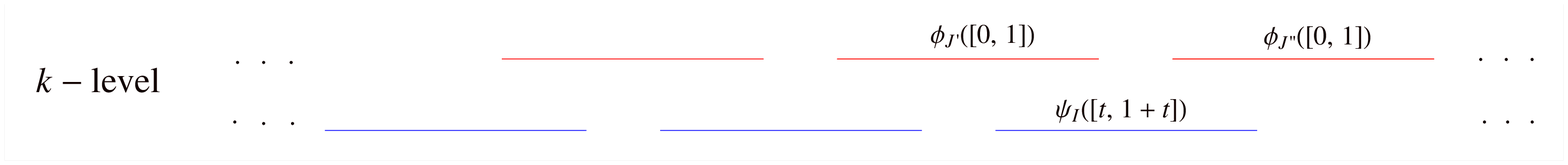}\\
\vspace{-3mm}\caption{{\footnotesize $N=3.$ Here $J'=J|_{k-1}1,
J''=J|_{k-1}2$ and
$\mathcal{N}_t(J')\cap\mathcal{N}_{t}(J'')=\{\psi_I([t,1+t])\}.$}}\label{fig:2}}
\end{figure}

Let $J'=J|_{k-1}(d-1)$ and $J''=J|_{k-1}d$. Then
\begin{equation*}
\mathcal{N}_{t}(J')\cap\mathcal{N}_{t}(J'')=\big\{\psi_{I}([t,1+t])\big\}
\end{equation*}
for some $I=i_1i_2\cdots i_{k-1}(N-1)\in \ON^k$. By (P4) we can pick
\begin{equation*}
x\in\phi_{J'}([0,1])\cap\psi_{I}([t,1+t])\cap\Gamma_{\beta,N}\cap(\Gamma_{\beta,N}+t)
\end{equation*}
and
\begin{equation*}
y\in\phi_{J''}([0,1])\cap\psi_{I}([t,1+t])\cap\Gamma_{\beta,N}\cap(\Gamma_{\beta,N}+t).
\end{equation*}
Let  $(x_\ell)_{\ell=1}^\infty$ and $(y_\ell)_{\ell=1}^\infty$ be
the unique $\ON$-code of $x$ and $y$, respectively. Then $x_k=d-1$
and $y_k=d$. On the other hand, $x-t, y-t\in \Gamma_{\beta,N}$ and
by $(x_\ell^*)_{\ell=1}^\infty, (y_\ell^*)_{\ell=1}^\infty$ we
denote their unique $\ON$-code, respectively. It follows from
(\ref{eq:psi-phi}) that
\begin{equation*}
x\in \psi_{I}([t,1+t])=\phi_{I}([0,1])+t~~\textrm{and}~~ y\in
\psi_{I}([t,1+t])=\phi_{I}([0,1])+t,
\end{equation*}
which imply $x-t, y-t\in \phi_{I}([0,1])$. Thus $x_k^*=y_k^*=N-1$.
Hence $t=x-(x-t)=y-(y-t)$ has two distinct $\OPN$-codes:
$(x_\ell-x^*_\ell)_{\ell=1}^\infty$ and
$(y_\ell-y^*_\ell)_{\ell=1}^\infty$.

 ${\rm (B)}\Rightarrow{\rm (A)}$. By Proposition \ref{prop:1},
  we have $\Gamma_{\beta,N}\cap(\Gamma_{\beta,N}+t)=\pz(\prod_{\ell=1}^\infty D_\ell)$ with $
  D_\ell$ consecutive. Thus, it follows from (\ref{eq:1}) that $t$ has a unique $\OPN$-code $(t_\ell)_{\ell=1}^\N$
  with each $t_\ell$ determined by
  $D_\ell=\ON\cap(\ON+t_\ell)$.

${\rm (B)}\Rightarrow{\rm (C)}$. It follows from Proposition
\ref{prop:1} that
$\Gamma_{\beta,N}\cap(\Gamma_{\beta,N}+t)=\pz(\prod_{\ell=1}^\infty
D_\ell)$ with each $D_\ell$ consecutive. Take
$J=(j_\ell)_{\ell=1}^\infty\in\prod_{\ell=1}^\infty D_\ell$. Then
$\pz(J)\in \Gamma _{\beta ,N}\cap (\Gamma _{\beta ,N}+t)$. Let
$J^*=(j_\ell^*)_{\ell=1}^\infty$ be the unique $\ON$-code of
$\pz(J)-t\in \Gamma _{\beta ,N}$. Thus it follows by
(\ref{eq:psi-phi}) that for each $k\ge 1$
\begin{equation*}
 \pz(J)\in \phi
_{J|_k}([0,1])\cap (\phi _{J^*|_k}([0, 1])+t)=\phi
_{J|_k}([0,1])\cap \psi _{J^*|_k}([t,1+t]),
\end{equation*}
and
\begin{equation*}
J-J^*=(j_\ell-j_\ell^*)_{\ell=1}^\infty =(t_\ell)_{\ell=1}^\infty
\end{equation*}
is the unique $\OPN$-code of $t$ (the uniqueness is given by
"$(B)\Rightarrow(A)$"). We shall prove $(t_\ell)_{\ell=1}^\infty$
satisfies (\ref{tunique}) in the following.

Case I. $t_k\ne \pm (N-1)$.

 In this case, $(j_k,j_k^*)\notin\{(N-1,0),(0,N-1)\}$. This together with the requirements in (B) imply that the
distance between the left endpoints of $\phi _{J|_k}([0,1])$ and
$\psi _{J^*|_k}([t,t+1])$ must be less than the length of the $k$-th
gap (see Figure \ref{fig:3}), i.e., $|\psi_{J^*|_k}(t)-\phi
     _{J|_k}(0)|<\beta ^{k-1}(1-\beta )/(N-1)-\beta ^k$.
\begin{figure}[ht]
 \centering{ \includegraphics[width=425pt]{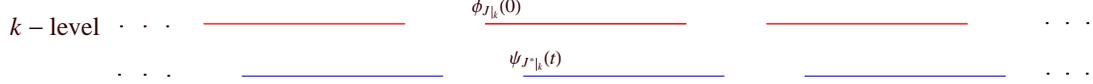}\\
 \vspace{-3mm} \caption{{\footnotesize $N=3.$ Here $\phi_{J|_k}(0)$ is the left endpoint of the $k$-level component $\phi_{J|_k}([0,1])$
 of $\Gamma_{\beta,N}$, and $\psi_{J^*|_k}(t)$ is the left endpoint of $k$-level component $\phi_{J^*|_k}([t,1+t])$ of $\Gamma_{\beta,N}+t$.}}\label{fig:3}}
\end{figure}

Thus (\ref{tunique}) follows by the following computation.
\begin{eqnarray*}
\left|\sum_{\ell=1}^\infty
     \frac{t_{k+\ell}\beta^{\ell-1}(1-\beta)}{N-1}\right|\quad &&=\beta ^{-k}\left|\sum_{\ell=k+1}^\infty
     \frac{t_{\ell}\beta^{\ell-1}(1-\beta)}{N-1}\right|=\beta ^{-k}
\left|t-\sum_{\ell=1}^k
     \frac{t_{\ell}\beta^{\ell-1}(1-\beta)}{N-1}\right|\\
     &&=\beta^{-k}\left|t-\left(\sum_{\ell=1}^k\frac{j_\ell\beta^{\ell-1}(1-\beta)}{N-1}-\sum_{\ell=1}^k\frac{j^*_\ell\beta^{\ell-1}(1-\beta)}{N-1}\right)\right|\\
     &&=\beta ^{-k}|t-(\phi_{J|_k}(0)-\phi _{J^*|_k}(0))|=\beta ^{-k}|\psi_{J^*|_k}(t)-\phi
     _{J|_k}(0)|\\
     &&<\frac{1-N\beta}{\beta(N-1)}.
    \end{eqnarray*}

 Case II. $t_k=N-1$.

In this case, $(j_k, j_k^*)=(N-1, 0)$. This together with the
requirements in (B) imply that $\phi
     _{J|_k}(0)-\psi_{J^*|_k}(t)<\beta ^{k-1}(1-\beta )/(N-1)-\beta
     ^k$. By a similar argument as in Case I, we have
\begin{equation*}
\sum_{\ell=1}^\infty
     \frac{t_{k+\ell}\beta^{\ell-1}(1-\beta)}{N-1}
    =\beta ^{-k}(\psi_{J^*|_k}(t)-\phi
     _{J|_k}(0))>-\frac{1-N\beta}{\beta(N-1)},
\end{equation*}
 leading to (\ref{tunique}).

The final case $t_k=1-N$ can be done in the same way as above.

${\rm (C)}\Rightarrow{\rm (B)}$. We will prove by induction that for
any $k\ge 1$ and $J\in \ON^k$
\begin{equation}\label{eq:7}
  \mathcal{N}_{t}(J)=\left\{
\begin{array}{ll}
 \big\{\psi_{J-(t_\ell)_{\ell=1}^k}([t,1+t])\big\},& {\rm if} ~J\in\prod_{\ell=1}^k \big(\ON\cap(\ON+t_\ell)\big)\\
 \emptyset, & {\rm otherwise}.
\end{array}
  \right.
\end{equation}
For $k=1$, let $J\in \ON\cap(\ON+t_1)$. In view of the proof of
$(B)\Rightarrow(C)$, (\ref{tunique}) becomes
\begin{equation*}
\left\{
\begin{array}{ll}
\psi_{J-t_1}(t)-\phi
     _{J}(0)<(1-\beta )/(N-1)-\beta,&~{\rm if}~t_1<N-1\\
\phi
     _{J}(0)-\psi_{J-t_1}(t)<(1-\beta )/(N-1)-\beta,& ~{\rm if}~t_1>1-N.
\end{array}
\right.
\end{equation*}
This implies (\ref{eq:7}) from the geometrical structure of $\Gamma
_{\beta , N}\cap (\Gamma _{\beta , N}+t)$.

Suppose that (\ref{eq:7}) is true for $k=n$. Let
$J=(j_\ell)_{\ell=1}^{n+1}\in \ON^{n+1}$. Then
$\mathcal{N}_{t}(J)=\emptyset$ if $J|_n\notin\prod_{\ell=1}^n
\big(\ON\cap(\ON+t_\ell)\big)$.
 Thus we assume
$J|_n\in\prod_{\ell=1}^n \big(\ON\cap(\ON+t_\ell)\big)$. For
$j_{n+1}\in \ON\cap(\ON+t_{n+1})$, (\ref{tunique}) becomes
\begin{equation*}
\left\{
\begin{array}{ll}

\psi_{J-(t_\ell)_{\ell=1}^{n+1}}(t)-\phi
     _{J}(0)<\beta ^n(1-\beta )/(N-1)-\beta ^{n+1}, &~{\rm if}~t_{n+1}<N-1\\
\phi
     _{J}(0)-\psi_{J-(t_\ell)_{\ell=1}^{n+1}}(t)<\beta ^n(1-\beta )/(N-1)-\beta ^{n+1},& ~{\rm if}~t_{n+1}>1-N,
\end{array}
\right.
\end{equation*}
 which implies (\ref{eq:7}) for $k=n+1$.
\end{proof}

 \section{The Self-similar structure of $\Gamma_{\beta,N}\cap(\Gamma_{\beta,N}+t)$}

 Let $\Omega$ be a nonempty finite subset of $\mathbb{Z}$. An
infinite string $K\in\Omega^\N$ is called \emph{strongly periodic
with period $q$} (or simply, strongly periodic) if there exist two
finite strings $I=(i_\ell)_{\ell=1}^q,
J=(j_\ell)_{\ell=1}^q\in\Omega^q$ with $q\ge 1$ such that
$K=I\,J^\N$ and $I\preccurlyeq J$, where $I\preccurlyeq J$ means
$i_\ell\le j_\ell, 1\le \ell\le q$. For two infinite strings
$I,J\in\Omega^\N$, we say $I\preccurlyeq J$ if $I|_k\preccurlyeq
J|_k$ for all $k\in\mathbb{N}$. The following lemma (cf.~\cite[Lemma
3.1]{LiW1}) gives a description of strongly periodic infinite
strings.

\begin{lemma}\label{lemma:2}
 Let $(j_\ell)_{\ell=1}^\infty\in
\ON^\N$. If there exists a
  positive integer $q$ such that $j_{\ell+q}\ge j_\ell$ for all
  $\ell\in\mathbb{N}$,
  then $(j_\ell)_{\ell=1}^\infty$ is strongly periodic with period $q$.
\end{lemma}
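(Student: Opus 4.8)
The plan is to exploit two features of the hypothesis separately: the \emph{monotonicity with lag $q$} encoded in $j_{\ell+q}\ge j_\ell$, and the \emph{finiteness} of the alphabet $\ON=\{0,1,\dots,N-1\}$. First I would split $(j_\ell)_{\ell=1}^\infty$ into its $q$ residue classes modulo $q$. For each fixed $r\in\{1,\dots,q\}$ the subsequence $(j_{r+mq})_{m\ge 0}$ is non-decreasing by hypothesis and takes values in the finite set $\ON$; being a bounded, non-decreasing sequence of integers, it must be eventually constant. Carrying this out for each of the finitely many residue classes and taking the largest stabilization index yields an integer $M$ with $j_{\ell+q}=j_\ell$ for every $\ell\ge M$; that is, the tail of the sequence is exactly periodic with period $q$.

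Second, I would convert this eventual periodicity into the required decomposition $K=I\,J^\N$ with $I\preccurlyeq J$. Choose a block length $p$ that is a multiple of $q$ and satisfies $p\ge M$, and set $I=(j_\ell)_{\ell=1}^{p}$ and $J=(j_{p+\ell})_{\ell=1}^{p}$. Because $p$ is a multiple of $q$ and the sequence is $q$-periodic beyond $M$, iterating $j_\ell=j_{\ell+q}$ along $\ell>p\ge M$ gives $j_{\ell+p}=j_\ell$ for all $\ell>p$, so $K=I\,J^\N$. The order relation $I\preccurlyeq J$ is exactly where the monotonicity hypothesis reenters: for $1\le\ell\le p$ one applies $j_{\cdot+q}\ge j_\cdot$ repeatedly along the chain $j_\ell\le j_{\ell+q}\le\cdots\le j_{\ell+p}$ (which has $p/q$ steps) to conclude $j_\ell\le j_{\ell+p}$, which is precisely the coordinatewise inequality $I\preccurlyeq J$. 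This exhibits $(j_\ell)_{\ell=1}^\infty$ as strongly periodic, the repeating block having length a multiple of $q$.

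I expect the main obstacle to be delivering \emph{both} the exact periodicity and the comparison $I\preccurlyeq J$ at one and the same block length. Finiteness of $\ON$ supplies the periodicity, but only past the a priori uncontrolled threshold $M$, while $I\preccurlyeq J$ requires comparing across a full block; reconciling the two forces the choice of $p$ as a multiple of $q$ exceeding $M$, after which the lag-$q$ monotonicity propagates into a lag-$p$ monotonicity and the period-$q$ tail is repackaged as equal-length blocks. Lemma~\ref{lemma:2} then follows, the conceptual point being that it is exactly the combination ``bounded and monotone along each residue class'' that upgrades the one-sided inequality $j_{\ell+q}\ge j_\ell$ to genuine strong periodicity.
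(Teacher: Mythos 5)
There is no proof in the paper to compare yours against: Lemma~\ref{lemma:2} is quoted from \cite[Lemma 3.1]{LiW1} without proof. Judged on its own, your argument is correct and is the natural one: split into residue classes modulo $q$, use monotonicity plus finiteness of $\ON$ to get a stabilization index $M$ with $j_{\ell+q}=j_\ell$ for all $\ell\ge M$, then repackage into blocks of common length $p$ (a multiple of $q$ with $p\ge M$), where $K=I\,J^\N$ follows from the periodic tail and $I\preccurlyeq J$ follows by iterating the lag-$q$ inequality across a full block. Each of these steps is valid.

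The one discrepancy is the period: you prove strong periodicity with period $p$, a sufficiently large multiple of $q$, while the lemma as printed claims period exactly $q$. This is a defect of the statement, not of your proof. With the paper's definition (both blocks $I,J$ of length $q$), the claim with period exactly $q$ is false: take $N=3$, $q=1$ and $K=012222\dots$; then $j_{\ell+1}\ge j_\ell$ for every $\ell$, but $K$ is not of the form $ij^\N$ with $i,j\in\{0,1,2\}$, since its second and third digits differ; it is, however, strongly periodic with period $2$, as $K=01(22)^\N$ and $01\preccurlyeq 22$. So any correct proof must settle for a multiple of $q$, exactly as yours does. The weakening is also harmless for the paper: the only place the lemma is invoked (the necessity part of the proof of Theorem~\ref{th:2a}) uses nothing beyond the fact that $(N-1-|t_\ell|)_{\ell=1}^\infty$ is strongly periodic with \emph{some} period, which your version supplies.
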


When $t$ has a unique $\OPN$-code $(t_\ell)_{\ell=1}^\N$, from the
proof of Theorem \ref{th:1a} it follows that there exists a sequence
of consecutive subsets $\ON\cap(\ON+t_\ell)$ such that
\begin{equation*}
\Gamma_{\beta,N}\cap(\Gamma_{\beta,N}+t)=\pz\left
(\prod_{\ell=1}^\infty \ON\cap(\ON+t_\ell)\right).
\end{equation*}
  Let $\gamma_*$ be the smallest member of
$\Gamma_{\beta,N}\cap(\Gamma_{\beta,N}+t)$. It is easy to check that
\begin{equation}\label{eq:Gamma}
\Gamma_t:=\Gamma_{\beta,N}\cap(\Gamma_{\beta,N}+t)-\gamma_*=
\pz\left(\prod_{\ell=1}^\infty\{0, \dots , N-1-|t_\ell|\}\right).
\end{equation}
Thus the Hausdorff and packing dimensions of
$\Gamma_{\beta,N}\cap(\Gamma_{\beta,N}+t)$ are given by
(cf.~\cite{LiW3})
\begin{eqnarray*}
    \dim_H\Gamma_{\beta,N}\cap(\Gamma_{\beta,N}+t)&&=\dim_H\Gamma_t=-\frac{1}{\log\beta}\underline{\lim}_{k\rightarrow
    \N}\frac{\sum_{\ell=1}^k(N-|t_\ell|)}{k};\\
     \dim_P\Gamma_{\beta,N}\cap(\Gamma_{\beta,N}+t)&&=\dim_P\Gamma_t=-\frac{1}{\log\beta}\overline{\lim}_{k\rightarrow
    \N}\frac{\sum_{\ell=1}^k(N-|t_\ell|)}{k}.
\end{eqnarray*}
The following properties make it easier to deal with $\Gamma_t$.

 \textbf{(P5)} For $I, J\in\ON^\N$, if $I\preccurlyeq J$ and $\pz(J)\in\Gamma_t$, then $\pz(I)\in\Gamma_t$;

 \textbf{(P6)} $\Gamma_t=\gamma^*-\Gamma_t$ where
$\gamma^*=\pz\big((N-1-|t_\ell|)_{\ell=1}^\infty\big)$ is the
largest member in $\Gamma_t$.

Thus, when $\Gamma_t$ is generated
  by an IFS, say $\{f_i(x)=r_i x+b_i\}_{i=1}^p$, we can
  require all $r_i>0$\,: if $r_i<0$ we can replace $f_i(x)$ by $f_i^*(x)=-r_i
  x+b_i+r_i\gamma^*.$ This follows from a simple computation (cf.~\cite{Deng, LiW1})
\begin{equation*}
f_i^*(\Gamma_t)=-r_i\Gamma_t+b_i+r_i\gamma^*=r_i(\gamma^*-\Gamma_t)+b_i=r_i\Gamma_t+b_i=f_i(\Gamma_t).
\end{equation*}
 Furthermore, we can assume $0=b_1\le b_2\le\cdots\le b_p$
since $0=\pz(0^\N)\in\Gamma_t$ by (P5).

The following theorem gives a sufficient and necessary condition for
$t\in\SPN$, i.e., the set of $t\in[-1,1]$ which have a unique
$\OPN$-code and at the same time make the intersection
$\Gamma_{\beta,N}\cap(\Gamma_{\beta,N}+t)$ a self-similar set.

\begin{theorem}\label{th:2a}
  Given $N\ge 2$ and $\beta\in(1/(2N-1),1/N)$, let $(t_\ell)_{\ell=1}^\infty$ be
   the unique $\OPN$-code of $t\in \UPN$. Then $t\in\SPN$ if and only if $(N-1-|t_\ell|)_{\ell=1}^\infty$ is strongly
  periodic.
\end{theorem}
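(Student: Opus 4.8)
Throughout I would work with the normalized set $\Gamma_t$ of \eqref{eq:Gamma}; writing $n_\ell:=N-1-|t_\ell|$ and $A:=\Gamma_t=\pz\big(\prod_{\ell=1}^\infty\{0,\dots,n_\ell\}\big)$, the assertion to prove is that $A$ is self-similar if and only if $(n_\ell)_{\ell=1}^\infty$ is strongly periodic. By Lemma \ref{lemma:2} the latter is equivalent to the existence of an integer $q\ge 1$ with $n_{\ell+q}\ge n_\ell$ for all $\ell$, and this is the form I would use. The single algebraic fact linking the two sides is the following: because $\beta<1/N$ the IFS $\{\phi_d:d\in\ON\}$ satisfies the SSC, so $\pz$ is injective on $\ON^\N$; since $\beta^qA=\pz\big(\prod_\ell\{0,\dots,n'_\ell\}\big)$ with $n'_\ell=0$ for $\ell\le q$ and $n'_{q+\ell}=n_\ell$, injectivity gives
\begin{equation*}
\beta^q A\subseteq A\qquad\Longleftrightarrow\qquad n_{\ell+q}\ge n_\ell\ \text{for all}\ \ell\ge 1 .
\end{equation*}

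For sufficiency, write $(n_\ell)=I\,J^\N$ with $I=(i_\ell)_{\ell=1}^q$, $J=(j_\ell)_{\ell=1}^q$ and $I\preccurlyeq J$, and let $B:=\pz\big(\prod_{\ell=1}^\infty\{0,\dots,(J^\N)_\ell\}\big)$ be the purely periodic companion. Splitting off the first $q$ coordinates yields $A=\bigcup_{\mathbf d\preccurlyeq I}\big(\pz(\mathbf d\,0^\N)+\beta^q B\big)$. The decisive remark is that $A$ and $B$ have the \emph{same} digit bound in every coordinate $\ell>q$ (both equal $(J^\N)_\ell$, by periodicity of $J^\N$) and differ only in the first block, where $i_\ell\le j_\ell$; since $\{0,\dots,i_\ell\}+\{0,\dots,j_\ell-i_\ell\}=\{0,\dots,j_\ell\}$, this gives $B=\bigcup_{\mathbf s\preccurlyeq J-I}\big(A+\pz(\mathbf s\,0^\N)\big)$, a finite union of genuine translates of $A$ (nonnegativity of the shifts $\mathbf s$ is exactly the hypothesis $I\preccurlyeq J$). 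Substituting this into the previous display writes $A=\bigcup_{\mathbf d,\mathbf s}\big(\beta^q A+c_{\mathbf d,\mathbf s}\big)$ as a finite union of affine copies of itself with common ratio $\beta^q\in(0,1)$; as $A$ is compact and satisfies this self-referential identity, it is the attractor of that IFS, hence self-similar.

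For necessity, suppose $A=\bigcup_{i=1}^p f_i(A)$ with $f_i(x)=r_i x+b_i$. By (P6) I may assume every $r_i>0$, and by (P5) I may order $0=b_1\le\cdots\le b_p$; removing an initial run of forced zeros I may also assume $n_1\ge1$. The leftmost map then fixes $0=\min A$, so $f_1(A)=r_1A\subseteq A$, and by the equivalence above it suffices to show $r_1=\beta^q$ for some $q\ge 1$, after which $\beta^qA\subseteq A$ gives $n_{\ell+q}\ge n_\ell$ and Lemma \ref{lemma:2} concludes. To pin down $r_1$ I would use the gap geometry: since $n_1\ge1$, every bounded complementary gap of $A$ has length $\beta^{k-1}g_1$ for some $k$ with $n_k\ge1$, where $g_1:=(1-\beta)/(N-1)-\beta>0$, so the gap lengths form the strictly decreasing sequence $\{\beta^{k-1}g_1\}$ and the largest gap has length $g_1$. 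No image $f_i(A)$ can straddle this largest gap $G^*$, for the largest gap of $f_i(A)$ is only $r_i g_1<g_1$; hence each piece lies entirely on one side of $G^*$, confining $r_1A$ to the leftmost level-$1$ cluster. Matching the largest gap $r_1g_1$ of $r_1A$ against the gap spectrum of $A$ then forces $r_1g_1=\beta^{k-1}g_1$, i.e. $r_1=\beta^{q}$ with $q=k-1\ge1$.

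The step I expect to be genuinely delicate is this last identification of $r_1$ as a power of $\beta$. The gap-length matching is transparent when the generating IFS satisfies the open set condition, but an a priori arbitrary IFS for $A$ may have overlapping images, and then one must rule out that the largest gap of the leftmost piece $r_1A$ is covered by some other $f_i(A)$ before it can be recognized as a gap of $A$. Establishing this --- that pieces cannot leak across the confining largest gaps, so that the $\beta$-adic gap spectrum of $A$ is respected by \emph{every} self-map --- is where the strong separation of $\{\phi_d:d\in\ON\}$ and a careful level-by-level analysis of $A$ must be brought in. Once the relevant ratio is known to be an integer power of $\beta$, both implications close through the single equivalence $\beta^q A\subseteq A\Leftrightarrow n_{\ell+q}\ge n_\ell$ together with Lemma \ref{lemma:2}.
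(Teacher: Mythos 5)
Your sufficiency half is correct and is essentially the paper's own argument: your maps $x\mapsto \beta^qx+\pz(\mathbf d\,0^\infty)+\beta^q\pz(\mathbf s\,0^\infty)$ with $\mathbf d\preccurlyeq I$, $\mathbf s\preccurlyeq J-I$ are precisely the paper's IFS $\{f_s(x)=\beta^q(x+s):s\in\mathcal S\}$ (your pairs $(\mathbf d,\mathbf s)$ correspond to the $2q$-blocks $\mathbf d\,\mathbf s\preccurlyeq\sigma\tau$), and your companion-set computation supplies the verification that the paper only cites from \cite{LiW1}. Your scaling equivalence $\beta^qA\subseteq A\Leftrightarrow n_{\ell+q}\ge n_\ell$ for all $\ell$ is also correct; it is exactly the computation in the paper's Case I of the necessity proof.

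The necessity half, however, contains a genuine gap --- precisely the step you flag as ``delicate'' and do not carry out --- and the sketch you offer for closing it would fail because its premise is false. The bounded gaps of $A=\Gamma_t$ are not of length $\beta^{k-1}g_1$: a gap created at level $k$ (between digits $j$ and $j+1\le n_k$) has length
\begin{equation*}
\frac{\beta^{k-1}(1-\beta)}{N-1}-\beta^{k}\,\pz\big((n_{k+\ell})_{\ell\ge1}\big),
\end{equation*}
which equals $\beta^{k-1}g_1$ only when $n_{k+\ell}=N-1$ for all $\ell\ge1$, and is strictly larger otherwise (e.g.\ $N=2$ and $(n_\ell)=(1,0,1,0,\dots)$: the level-$1$ gap has length $(1-2\beta^2)/(1+\beta)>1-2\beta=g_1$). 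So the ``$\beta$-adic gap spectrum'' $\{\beta^{k-1}g_1\}$ you want to match against does not exist, and nothing in your argument pins $r_1$ to an integer power of $\beta$. Note also that you are aiming at a stronger statement than is needed: the paper never proves that $r_1$ itself is a power of $\beta$. Its route is: since $b_1=0$, one has $r_1^kA\subseteq A$ for every $k\ge1$; pick $q$ with $r_1\in[\beta^{q+1},\beta^q)$ and write $r_1=\beta^{q+\gamma}$. If $\gamma$ is rational (including the endpoint case $r_1=\beta^{q+1}$), then some power $r_1^k$ is an integer power of $\beta$, and your own scaling equivalence applied to $f_1^k$ gives $n_{\ell+m}\ge n_\ell$ for all $\ell$, hence strong periodicity by Lemma \ref{lemma:2}. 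If $\gamma$ is irrational, density of $\{k\gamma-[k\gamma]\}$ in $(0,1)$ produces $k$ with $\beta<\beta^{1-k\gamma+[k\gamma]}<(1-\beta)/(N-1)$, and then $r_1^k\cdot\beta^{\ell-1}(1-\beta)/(N-1)$ (for any $\ell$ with $n_\ell\ge1$) lies in the interval $\big(\beta^{m},\beta^{m-1}(1-\beta)/(N-1)\big)$, $m=kq+[k\gamma]+\ell$, which is disjoint from the ambient Cantor set $\Gamma_{\beta,N}\supseteq A$ --- a contradiction. The decisive difference from your sketch is that this gap argument uses the explicitly located gaps of $\Gamma_{\beta,N}$, not the gaps of $A$ itself, which is exactly what makes it immune to the overlap/leakage problem you acknowledge you cannot handle. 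As written, your proof of necessity is incomplete; it can be repaired by adopting the iterate-and-dichotomize strategy above.
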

\begin{proof}
 It suffices to prove that $\Gamma_t$, given by (\ref{eq:Gamma}), is a self-similar set if and only if
$(N-1-|t_\ell|)_{\ell=1}^\infty$ is strongly periodic. Firstly, we
prove the sufficiency. If $(N-1-|t_\ell|)_{\ell=1}^\infty\in \ON^\N$
is strongly periodic, it can be written as
$(N-1-|t_\ell|)_{\ell=1}^\infty=\sigma\,(\sigma+\tau)^\N\in \ON^\N$
where $\sigma=(\sigma_\ell)_{\ell=1}^q,
\tau=(\tau_\ell)_{\ell=1}^q\in \ON^q$ for some $q\in{\mathbb N}$ and
$\sigma+\tau=(\sigma_\ell+\tau_\ell)_{\ell=1}^q\in \ON^q$. Let
\begin{equation*}
\mathcal{S}:=\left\{\beta^{-q}\sum_{\ell=1}^{2q}\frac{j_\ell\beta^{\ell-1}(1-\beta)}{N-1}:\ON^{2q}\ni(j_\ell)_{\ell=1}^{2q}\preccurlyeq\sigma\tau\right
\}.
\end{equation*}
One can check that $\Gamma_t $ can be generated by the IFS
$\{f_s(x)=\beta ^q(x+s): s\in \mathcal{S}\}$ (cf.~\cite{LiW1}).

 Next, we will prove the necessity. By (P6),
 we can assume that $\Gamma_t$ is generated by an IFS
 $\{f_i(x)=r_ix+b_i\}_{i=1}^p$ with $r_i\in(0,1)$ and $0=b_1\le b_2\le\cdots\le
 b_p$. Note that the union
 $(0,1)=\bigcup_{q=0}^\N[\beta^{q+1},\beta^q)$ is disjoint, there
 exist some $q\ge 0$ such that $r_1\in[\beta^{q+1},\beta^q)$.

Case I. $r_1=\beta ^{q+1}$. Then for each $\ell\ge 1$, it follows
from (P5) that
\begin{equation*}
\frac{(N-1-|t_\ell|)\beta^{\ell-1}(1-\beta)}{N-1}=\pz\big(0^{\ell-1}(N-1-|t_\ell|)0^\N\big)\in\Gamma_t.
\end{equation*}
 Thus
\begin{equation*}
f_1\left(\frac{(N-1-|t_\ell|)\beta ^{\ell-1}(1-\beta
)}{N-1}\right)=\frac{(N-1-|t_\ell|)\beta ^{\ell+q}(1-\beta
)}{N-1}\in \Gamma_t
\end{equation*}
which implies that $N-1-|t_\ell|\leq N-1-|t_{\ell+q+1}|$ for each
$\ell\ge 1$. So $(N-1-|t_\ell|)_{\ell=1}^\infty$ is strongly
  periodic with period $q+1$ by Lemma \ref{lemma:2}.

Case II. $\beta ^{q+1}<r_1<\beta ^q$. Let $r_1=\beta ^{q+\gamma }$
with $0<\gamma <1$.

(IIa)  $\gamma $ is rational. Take $k\in \mathbb N$ such that
$k\gamma \in \mathbb N$. Note that the IFS
 $\{f_0(x)=r_1^kx,  f_i(x)=r_ix+b_i, 1\leq i\leq p\}$ generates
 $\Gamma_t $. Thus the conclusion can be proved in the same way as that in Case I.

(IIb) $\gamma $ is irrational. Take $k\in \mathbb N$ such that
\begin{equation}\label{kgamma}
 \beta <\beta ^{1-k\gamma
+[k\gamma ]}<\frac{1-\beta }{N-1}.
\end{equation}
This is possible since the set $\{k\gamma -[k\gamma
]:k\in\mathbb{N}\}$ is dense in the interval $(0,1)$. Let
$f_0(x)=r_1^kx$. Then for some $\beta ^{\ell-1}(1-\beta )/(N-1)\in
\Gamma_t $ we have
\begin{equation*}
 f_0\left(\frac{\beta ^{\ell-1}(1-\beta
)}{N-1}\right)=\frac{\beta ^{kq+k\gamma +\ell-1}(1-\beta
)}{N-1}<\xi:=\frac{\beta ^{kq+[k\gamma ]+\ell-1}(1-\beta )}{N-1}.
\end{equation*}

\vspace{-4.0mm}
\begin{figure}[ht]
\centering{   \includegraphics[width=425pt]{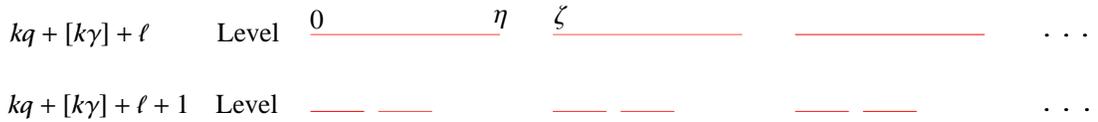}\\
 \vspace{-3mm} \caption{\footnotesize{ $\xi=(\beta ^{kq+[k\gamma
]+\ell-1}(1-\beta ))/(N-1), ~\eta=\beta ^{kq+[k\gamma ]+\ell}$. From
the geometrical construction of $\Gamma_t$, it is easy to see that
$(\eta,\xi)\cap\Gamma_t=\emptyset$.}}\label{fig:4}}
\end{figure}
 On the other hand, from (\ref{kgamma}) it follows that
\begin{equation*}
\frac{\beta ^{kq+k\gamma +\ell-1}(1-\beta )}{N-1}>\eta:=\beta
^{kq+[k\gamma ]+\ell}.
\end{equation*}
Thus $f_0(\frac{\beta ^{\ell-1}(1-\beta )}{N-1})\notin \Gamma_t$
(see Figure \ref{fig:4}), leading to a contradiction.
\end{proof}
In fact, the above proof gives a general result on the structure of
a class of subsets of the  $N$-part homogeneous Cantor set.
\begin{corollary}
Given $N\ge 2$ and $\beta \in(0, 1/N)$, let $(i_\ell)_{\ell
=1}^\infty , (j_\ell)_{\ell =1}^\infty \in \ON^\N$ satisfying
$(i_\ell)_{\ell =1}^\infty \preccurlyeq (j_\ell)_{\ell =1}^\infty$.
Then $\pz(\prod _{\ell =1}^\infty \{i_\ell, i_\ell +1,\dots , j_\ell
\})$ is a self-similar set if and only if $(j_\ell-i_\ell)_{\ell
=1}^\infty$ is strongly
  periodic.
\end{corollary}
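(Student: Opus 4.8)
The plan is to reduce the statement to Theorem \ref{th:2a}, observing that the proof of that theorem uses nothing specific about the sequence $(N-1-|t_\ell|)$, nor the lower bound $\beta>1/(2N-1)$. First I would normalise by translation. Writing $E:=\pz(\prod_{\ell=1}^\infty\{i_\ell,i_\ell+1,\dots,j_\ell\})$, the smallest member of $E$ is $\pz((i_\ell)_{\ell=1}^\infty)$, and subtracting it gives
\begin{equation*}
E-\pz\big((i_\ell)_{\ell=1}^\infty\big)=\pz\Big(\prod_{\ell=1}^\infty\{0,1,\dots,m_\ell\}\Big)=:G,\qquad m_\ell:=j_\ell-i_\ell\in\ON .
\end{equation*}
A set is self-similar if and only if every translate of it is (conjugating each $f_i(x)=r_ix+b_i$ by the translation by $c$ produces $g_i(x)=r_ix+b_i+(r_i-1)c$, again a contracting affine map), so $E$ is self-similar iff $G$ is. Since $(j_\ell-i_\ell)_{\ell=1}^\infty=(m_\ell)_{\ell=1}^\infty$, it suffices to show that $G$ is self-similar iff $(m_\ell)_{\ell=1}^\infty$ is strongly periodic.

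Next I would observe that $G$ has precisely the form (\ref{eq:Gamma}) of the set $\Gamma_t$, with $m_\ell$ playing the role of $N-1-|t_\ell|$, and that it inherits the two structural properties on which the proof of Theorem \ref{th:2a} rests. Property (P5) holds because each factor $\{0,1,\dots,m_\ell\}$ is downward closed: if $I\preccurlyeq J$ and $\pz(J)\in G$ then $i_\ell\le j_\ell\le m_\ell$, so $\pz(I)\in G$. Property (P6) holds because $\gamma^*:=\pz((m_\ell)_{\ell=1}^\infty)$ is the largest element of $G$, and $m_\ell-k_\ell$ ranges over $\{0,\dots,m_\ell\}$ as $k_\ell$ does, whence $\gamma^*-G=G$. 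With (P5) and (P6) available, I may, exactly as in Theorem \ref{th:2a}, assume $G$ is generated by an IFS $\{f_i(x)=r_ix+b_i\}_{i=1}^p$ with $r_i\in(0,1)$ and $0=b_1\le\cdots\le b_p$.

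I would then rerun the argument of Theorem \ref{th:2a} verbatim. For sufficiency, a strongly periodic $(m_\ell)_{\ell=1}^\infty=\sigma(\sigma+\tau)^\N$ with $\sigma,\tau,\sigma+\tau\in\ON^q$ yields the explicit generating system $\{f_s(x)=\beta^q(x+s):s\in\mathcal S\}$ for $G$, with $\mathcal S$ defined exactly as in the sufficiency part of Theorem \ref{th:2a}. For necessity, one locates $r_1\in[\beta^{q+1},\beta^q)$: if $r_1=\beta^{q+1}$, then applying $f_1$ to $\pz(0^{\ell-1}m_\ell0^\N)\in G$ forces $m_\ell\le m_{\ell+q+1}$ for all $\ell$ via (P5), so $(m_\ell)_{\ell=1}^\infty$ is strongly periodic with period $q+1$ by Lemma \ref{lemma:2}; and $\beta^{q+1}<r_1<\beta^q$ is excluded by the gap argument of Case II, the rational subcase reducing to the first and the irrational subcase producing a genuine gap $(\eta,\xi)$ of $G$ into which $f_0(x)=r_1^kx$ maps a point of $G$, a contradiction.

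The point I would verify most carefully — and the only place the hypothesis on $\beta$ enters — is that the irrational subcase (IIb) survives for \emph{every} $\beta\in(0,1/N)$. The contradiction there rests on the interval in (\ref{kgamma}) being nonempty, i.e.\ on $\beta<(1-\beta)/(N-1)$, which is equivalent to $\beta N<1$; equivalently, the interval $(\eta,\xi)$ in Figure \ref{fig:4} is a true gap of $G$ exactly because the level-gap length $\beta^{\ell-1}(1-\beta)/(N-1)-\beta^\ell$ is positive, again precisely when $\beta<1/N$. Thus the stronger bound $\beta>1/(2N-1)$ invoked elsewhere in the paper is never used, and the proof of Theorem \ref{th:2a} extends to the whole range $\beta\in(0,1/N)$, which completes the argument.
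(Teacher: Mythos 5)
Your proposal is correct and is essentially the paper's own argument: the paper proves this corollary simply by remarking that the proof of Theorem \ref{th:2a} carries over, and your write-up supplies exactly the details implicit in that remark (translation normalization, the persistence of (P5)--(P6) for $G=\pz(\prod_\ell\{0,\dots,m_\ell\})$, and the key observation that the gap argument in Case IIb only needs $\beta<1/N$, i.e.\ $\beta^{m+1}<\beta^m(1-\beta)/(N-1)$). The only blemish is a sign typo in your conjugation formula, which should read $g_i(x)=r_ix+b_i+(1-r_i)c$ rather than $b_i+(r_i-1)c$; this does not affect the argument.
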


\section{The critical point for $\UPN$}

According to a result of Sidorov~\cite[Proposition 3.8]{Sidorov}
pertaining to the general digit sets, we have that Lebesgue a.a.
$t\in[-1,1]$ have a continuum of distinct $\OPN$-codes if
$\beta\in(1/(2N-1),1/N)$. However, we will show in this section, for
the same set of $\beta$'s, that there are infinitely many
$t\in[-1,1]$ having a unique $\OPN$-code. Note that these $t$ form
exactly the set $\UPN$ defined earlier. Moreover, there is a
critical point $\beta_c\in(1/(2N-1),1/N)$ such that $\UPN$ has
positive Hausdorff dimension if $\beta\in(1/(2N-1),\beta_c)$,
 and contains countably infinite many elements if
 $\beta\in(\beta_c,1/N)$. This can be seen in Theorem \ref{th:critical
 point} which is proved by using techniques from beta-expansions.

 Given $m\ge 2$ and $\beta\in(1/m,1)$, let
  $
  \Omega_m:=\{0,1,\dots,m-1\}.
  $
    Recall that the sequence $(s_\ell)_{\ell=1}^\N\in\Omega_m^\N$ is called a
   \emph{$\beta$-expansion of $x$ with digit set $\Omega_m$}  if we can write $
  x=\sum_{\ell=1}^\infty s_\ell\beta^\ell$ with $s_\ell\in\Omega_m.$
 The largest number we can obtain in this way is
$x_{\max}:=(m-1)\beta/(1-\beta)$. Now for any $x\in(0,x_{\max}]$,
let us define a sequence $(s_\ell)_{\ell=1}^\N\in\Omega_m^\infty$
recursively by the \emph{quasi-greedy algorithm} (cf.~\cite{Vries
Komornik}): let $s_0=0$, and if $s_\ell$ is already defined for all
$\ell<n$, then let $s_n$ be the largest element in $\Omega_m$
satisfying $ \sum_{\ell=1}^n s_\ell\beta^\ell<x.$ Obviously,
$\sum_{\ell=1}^\N s_\ell\beta^\ell=x$, and we call
$(s_\ell)_{\ell=1}^\N$ the \emph{quasi-greedy $\beta$-expansion of
$x$ with digit set $\Omega_m$}. We always call
$(s_\ell)_{\ell=1}^\N$ a quasi-greedy expansion of $x$ if there is
no confusion about $\beta$ and the digit set $\Omega_m$. It is easy
to see that $(s_\ell)_{\ell=1}^\N$ is an \emph{infinite expansion}
(i.e., infinitely many $s_\ell$ are non-zeros).

We use systematically the lexicographical order between sequences:
we write $(a_\ell)_{\ell=1}^\N<(b_\ell)_{\ell=1}^\N$ or
$(b_\ell)_{\ell=1}^\N>(a_\ell)_{\ell=1}^\N$ if there exists an
$n\in\mathbb{N}$ such that $a_\ell=b_\ell$ for $\ell<n$ and
$a_n<b_n$. Furthermore, we write
$(a_\ell)_{\ell=1}^\N\le(b_\ell)_{\ell=1}^\N$ or
$(b_\ell)_{\ell=1}^\N\ge(a_\ell)_{\ell=1}^\N$ if we also allow the
equality of the two sequences. Similarly, for two $s$-blocks
$c_1\dots c_s$ and $d_1\dots d_s$, we write
$(c_\ell)_{\ell=1}^s<(d_\ell)_{\ell=1}^s$ if there exists $1\le n\le
s$ such  that $c_1\dots c_{n-1}=d_1\dots d_{n-1}$ and $c_n< d_n$.
Moreover, we write $(c_\ell)_{\ell=1}^s\le (d_\ell)_{\ell=1}^s$ if
we allow the equality of the two blocks.

Therefore, the quasi-greedy expansion of $x\in(0,x_{\max}]$ is the
largest infinite expansion among all the $\beta$-expansions of $x$
in the sense of lexicographical order. Note that $1\in(0,x_{\max}]$
since $\beta>1/m$. In the remainder of the paper we will reserve the
notation
$(\delta_\ell)_{\ell=1}^\N=(\delta_\ell(\beta))_{\ell=1}^\N$ for the
quasi-greedy $\beta$-expansion of $1$ with digit set $\Omega_m$. The
following important properties of the quasi-greedy expansion of $1$,
will be used in the proof of Theorem \ref{th:critical point}.

\begin{proposition}[Parry~\cite{Parry}]\label{prop: strictly decreasing of quasi expansion}
  Given $m\ge 2$, the map $\beta\rightarrow(\delta_\ell(\beta))_{\ell=1}^\N\in\Omega_m^\N$, with $\beta\in(1/m,1)$, is strictly decreasing
  in the sense of lexicographical order. Moreover, the map is continuous
  w.r.t. the topology in $\Omega_m^\N$ induced by the metric $d\big((a_\ell)_{\ell=1}^\N, (b_\ell)_{\ell=1}^\N\big)=2^{-\min\{j: a_j\ne
  b_j\}}$.
\end{proposition}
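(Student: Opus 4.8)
The plan is to handle the two assertions separately, and the key that makes everything work is a quantitative tail estimate for the quasi-greedy expansion of $1$ that is valid precisely because $\beta>1/m$. Write $(\delta_\ell)=(\delta_\ell(\beta))_{\ell=1}^\infty$ and set $R_n:=1-\sum_{\ell=1}^n\delta_\ell\beta^\ell$, so $R_n>0$ for all $n$ by the quasi-greedy algorithm. The first step is to prove the sharper bound
\[
R_n\le\beta^n\quad(n\ge 0),\qquad\text{equivalently}\qquad \sum_{\ell=1}^\infty\delta_{n+\ell}\beta^\ell=\beta^{-n}R_n\le 1 .
\]
I would prove this by induction, with $R_0=1=\beta^0$. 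If $\delta_n<m-1$, the maximality of $\delta_n$ forces $\sum_{\ell=1}^{n-1}\delta_\ell\beta^\ell+(\delta_n+1)\beta^n\ge 1$, i.e.\ $R_n\le\beta^n$ at once. If $\delta_n=m-1$ then $R_n=R_{n-1}-(m-1)\beta^n\le\beta^{n-1}\big(1-(m-1)\beta\big)\le\beta^n$, where the last inequality is exactly the hypothesis $m\beta\ge 1$. This is the only point where $\beta>1/m$ is used, and it is decisive.

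Next I prove strict monotonicity. Fix $1/m<\beta_1<\beta_2<1$ and suppose for contradiction that $(\delta_\ell(\beta_1))\le(\delta_\ell(\beta_2))$ in the lexicographic order. Equality is impossible, since for a fixed infinite sequence the map $x\mapsto\sum_\ell\delta_\ell x^\ell$ is strictly increasing, hence injective, forcing $\beta_1=\beta_2$. So the two sequences first differ at some index $n$ with $\delta_n(\beta_1)<\delta_n(\beta_2)$. Evaluating $(\delta_\ell(\beta_2))$ at $\beta_1$ and using $\delta_\ell(\beta_2)\ge0$, $\delta_n(\beta_2)-\delta_n(\beta_1)\ge 1$, together with the tail estimate of the first step applied to $(\delta_\ell(\beta_1))$, gives
\[
\sum_{\ell=1}^\infty\big(\delta_\ell(\beta_2)-\delta_\ell(\beta_1)\big)\beta_1^\ell\;\ge\;\beta_1^n-\sum_{\ell>n}\delta_\ell(\beta_1)\beta_1^\ell\;=\;\beta_1^n\Big(1-\sum_{\ell=1}^\infty\delta_{n+\ell}(\beta_1)\beta_1^\ell\Big)\;\ge\;0,
\]
so $\sum_\ell\delta_\ell(\beta_2)\beta_1^\ell\ge 1$. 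On the other hand, since $\beta_1<\beta_2$ and $(\delta_\ell(\beta_2))$ is infinite (hence has infinitely many positive coefficients), $\sum_\ell\delta_\ell(\beta_2)\beta_1^\ell<\sum_\ell\delta_\ell(\beta_2)\beta_2^\ell=1$, a contradiction. Therefore $(\delta_\ell(\beta_1))>(\delta_\ell(\beta_2))$, which is the claimed strict decrease.

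Finally, for continuity let $\beta_k\to\beta_0$ in $(1/m,1)$. Since $\Omega_m^\infty$ is compact in the metric $d$, it suffices to show that every convergent subsequence $(\delta_\ell(\beta_{k_i}))\to\eta=(\eta_\ell)$ has limit $(\delta_\ell(\beta_0))$. Because the digits are bounded by $m-1$ and $\beta_{k_i}\to\beta_0<1$, the value $\sum_\ell a_\ell x^\ell$ depends continuously on $(a,x)$ uniformly on the relevant range, so $\sum_\ell\eta_\ell\beta_0^\ell=\lim_i\sum_\ell\delta_\ell(\beta_{k_i})\beta_{k_i}^\ell=1$; and each admissibility inequality $(\delta_{n+\ell}(\beta_{k_i}))_\ell\le(\delta_\ell(\beta_{k_i}))_\ell$ is a closed condition (a strict reversal is detected on a finite prefix), so it passes to the limit and $\eta$ inherits it. If $\eta$ is an infinite expansion, the uniqueness and lexicographic maximality of the quasi-greedy expansion of $1$ force $\eta=(\delta_\ell(\beta_0))$, completing the argument. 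The one delicate point, which I expect to be the main obstacle, is to rule out that the limit $\eta$ collapses to a \emph{finite} expansion of $1$: this can happen only at the countable set of parameters whose greedy expansion of $1$ is finite, and there one must invoke the infinite-expansion convention built into the quasi-greedy algorithm together with the monotonicity just established to identify the one-sided limits and recover convergence in the metric $d$.
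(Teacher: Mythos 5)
Your strict-monotonicity argument is correct and complete: the inductive tail bound $1-\sum_{\ell=1}^n\delta_\ell(\beta)\beta^\ell\le\beta^n$ (valid exactly because $m\beta>1$), followed by the two-sided comparison of $\sum_{\ell}\delta_\ell(\beta_2)\beta_1^\ell$ with $1$, is airtight. Since the paper offers no proof of this proposition at all (it is quoted from Parry), this half of your write-up is a genuine, self-contained substitute for the citation.

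The continuity half, however, has a real gap at precisely the point you flagged, and the gap is not repairable, because the continuity assertion is false as stated. Take $m=2$ and $\beta_0=(\sqrt{5}-1)/2$, so that $\beta_0+\beta_0^2=1$. The quasi-greedy expansion of $1$ in base $\beta_0$ is $\delta(\beta_0)=(10)^\infty$, since choosing $\delta_2=1$ would make the second partial sum equal to $1$ rather than strictly less than $1$. On the other hand, every $\beta\in(1/2,\beta_0)$ satisfies $\beta+\beta^2<1$, so $\delta(\beta)$ begins with $11$; in fact, as $\beta\uparrow\beta_0$ one finds $\delta(\beta)=110^{k(\beta)}1\cdots$ with $k(\beta)\to\infty$. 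Hence $d\big(\delta(\beta),\delta(\beta_0)\big)=2^{-2}$ for all $\beta\in(1/2,\beta_0)$, and the left limit of the map at $\beta_0$ is the finite sequence $110^\infty\ne(10)^\infty$: the map jumps. So in your compactness argument the limit $\eta$ really can collapse to a finite expansion, and when it does, no appeal to the infinite-expansion convention can ``recover convergence in the metric $d$'' --- the one-sided limits genuinely differ. What your argument actually proves is the correct version of the statement: the map is continuous at every $\beta$ for which $1$ admits no finite expansion with digits in $\Omega_m$ (the only case your compactness argument leaves open), together with one-sided continuity (from the right in $\beta$, equivalently from the left in the base $q=1/\beta$) everywhere; jumps occur exactly at the countably many bases where $1$ has a finite expansion, as at $\beta_0$ above. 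Reassuringly, the flaw lies in the paper's statement, not in your reading of it, and it is harmless downstream: every use the paper makes of this proposition --- deducing $\beta_n\uparrow\beta_c$ from $C_n^\infty\downarrow(\lambda_\ell)_{\ell=1}^\infty$, deducing $\alpha_n\uparrow\alpha_c$, and the lexicographic comparisons of $(\delta_\ell(\beta))_{\ell=1}^\infty$ for $\beta$ on either side of $\beta_c$ or $\alpha_c$ --- requires only strict monotonicity plus an elementary lexicographic limit argument, both of which your first two steps supply.
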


\begin{proposition}[de Vries and Komornik~\cite{Vries Komornik}]\label{prop: quasi greedy expansion}
  Given $m\ge 2$ and $\beta\in(1/m,1)$, let $(\gamma_\ell)_{\ell=1}^\N$ be an infinite $\beta$-expansion
  of $1$ with digit set $\Omega_m$. Then $(\gamma_\ell)_{\ell=1}^\N$ is the quasi-greedy expansion
  of $1$ if and only if for all $k\ge 1$
  \begin{equation}
  \gamma_{k+1}\gamma_{k+2}\dots\le\gamma_1\gamma_2\dots \label{eq:quasi-greedy expansion}
  \end{equation}
  in the lexicographical order.
\end{proposition}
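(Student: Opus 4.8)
The plan is to keep the notation $(\delta_\ell)_{\ell=1}^{\infty}$ for the quasi-greedy $\beta$-expansion of $1$ with digit set $\Omega_m$, to write $v(a)=\sum_{\ell=1}^{\infty}a_\ell\beta^\ell$ for the value of a digit string $a\in\Omega_m^{\infty}$, and to exploit the fact recorded above that the quasi-greedy expansion of a point in $(0,x_{\max}]$ is the lexicographically largest infinite $\beta$-expansion of that point. Two elementary estimates will do all the work. First I would prove that every tail value of $(\delta_\ell)$ is at most $1$, i.e.
\[
y_k:=\sum_{j=1}^{\infty}\delta_{k+j}\beta^j\le 1\qquad(k\ge 0).
\]
This follows by induction on $k$ from the recursion $y_k=y_{k-1}/\beta-\delta_k$ and $y_0=1$: if $\delta_k<m-1$, then the maximality in the quasi-greedy algorithm (raising $\delta_k$ by one would push the $k$-th partial sum up to $1$) gives $y_k\le 1$ at once, while if $\delta_k=m-1$ the inductive bound $y_{k-1}\le 1$ yields $y_k\le 1/\beta-(m-1)<1$, the last step because $\beta>1/m$.

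For the necessity I take $(\gamma_\ell)=(\delta_\ell)$ and must show $\delta_{k+1}\delta_{k+2}\cdots\le\delta_1\delta_2\cdots$ for every $k$. The tail $\delta_{k+1}\delta_{k+2}\cdots$ is an infinite expansion of $y_k\le1$, so it is lexicographically $\le$ the quasi-greedy expansion of $y_k$. It therefore suffices to show that the quasi-greedy expansion $c=(c_\ell)$ of any $y\in(0,1]$ satisfies $c\le(\delta_\ell)$. Were this false, let $p$ be the first index with $c_p>\delta_p$; then, since $c_p-\delta_p\ge1$ and all digits are nonnegative,
\[
y-1=v(c)-v(\delta)\ge\beta^p-\sum_{\ell>p}\delta_\ell\beta^\ell=\beta^p(1-y_p)\ge0,
\]
using the estimate $y_p\le1$ just established. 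Combined with $y\le1$ this forces $y=1$ and equality throughout, which in turn forces $c_\ell=0$ for all $\ell>p$, contradicting that $c$ is an infinite expansion. Hence $c\le(\delta_\ell)$, and the necessity follows.

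For the sufficiency, let $(\gamma_\ell)$ be an infinite expansion of $1$ with $\gamma_{k+1}\gamma_{k+2}\cdots\le\gamma_1\gamma_2\cdots$ for all $k\ge1$. Since $(\delta_\ell)$ is the largest infinite expansion of $1$ we have $(\gamma_\ell)\le(\delta_\ell)$, whence every tail of $(\gamma_\ell)$ is $\le(\gamma_\ell)\le(\delta_\ell)$; call a string with this property (all its tails lexicographically $\le(\delta_\ell)$) \emph{admissible}. The decisive lemma is that an admissible string $a$ has $v(a)\le1$. I would prove this by compactness: the set $S$ of admissible strings is closed and invariant under the shift in $\Omega_m^{\infty}$, hence compact, and $v$ is continuous, so $v$ attains a maximum $M$ at some $a^*\in S$; if $a^*\ne(\delta_\ell)$, comparing at the first index $p$ where $a^*_p<\delta_p$ and using that the $p$-th tail of $a^*$ again lies in $S$ gives $M-1\le-\beta^p+\beta^p M=\beta^p(M-1)$, which forces $M\le1$ because $0<\beta^p<1$; and $M=v(\delta)=1$ in the remaining case. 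Granting the lemma, suppose $(\gamma_\ell)<(\delta_\ell)$ with first discrepancy at $n$, so $\gamma_n<\delta_n$. Subtracting the two expansions of $1$ gives
\[
(\delta_n-\gamma_n)\beta^n=\sum_{\ell>n}(\gamma_\ell-\delta_\ell)\beta^\ell\le\sum_{\ell>n}\gamma_\ell\beta^\ell\le\beta^n,
\]
where the last inequality is the lemma applied to the admissible $n$-th tail of $(\gamma_\ell)$. Thus $\delta_n-\gamma_n=1$ and both inequalities are equalities; equality in the last one forces $\delta_\ell=0$ for all $\ell>n$, contradicting that $(\delta_\ell)$ is infinite. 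Hence $(\gamma_\ell)=(\delta_\ell)$.

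The step I expect to be the real obstacle is exactly the passage from the lexicographic hypotheses to honest estimates on the real values $v(\cdot)$: lexicographic order and numerical order do not agree for redundant digit strings, and a direct induction stalls because each tail estimate refers to a later rather than a smaller index. Both places where this bites — the comparison $c\le(\delta_\ell)$ in the necessity and the value bound $v(a)\le1$ in the sufficiency — are resolved by the same device, namely combining the uniform tail bound $y_k\le1$ with a first-difference computation (packaged, in the sufficiency, as a compactness-and-maximality argument that breaks the circular dependence on later indices).
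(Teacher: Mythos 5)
Your proof is correct, but there is nothing in the paper to compare it against: the paper does not prove this proposition at all, it imports it from de Vries and Komornik \cite{Vries Komornik}. So what you have produced is a self-contained proof of a quoted result, and it holds up under scrutiny. The tail bound $y_k\le 1$ is airtight (the case $\delta_k<m-1$ follows from maximality in the quasi-greedy algorithm, the case $\delta_k=m-1$ from $\beta>1/m$); the monotonicity step in the necessity is a valid first-difference computation whose equality case is excluded because quasi-greedy expansions are infinite; and the compactness-and-maximality argument in the sufficiency correctly breaks the circular dependence that a naive tail-by-tail induction runs into, since shift-invariance of the admissible set lets you bound $v(\sigma^p a^*)$ by the maximum $M$ itself and deduce $M-1\le\beta^p(M-1)$, hence $M\le 1$. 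This is a genuinely different route from the published one: in de Vries--Komornik (and in the earlier Parry/Komornik--Loreti tradition) the analogous value-domination statements are obtained by elementary inductive estimates on finite partial sums, whereas you trade that induction for compactness of $\Omega_m^{\infty}$ and attainment of the maximum; your version is arguably more transparent, at the cost of being less elementary. Two cosmetic repairs: in your last display the conclusion $\delta_\ell=0$ for $\ell>n$ follows from equality in the \emph{first} of the two inequalities (the one where the $-\delta_\ell\beta^\ell$ terms are dropped), not ``the last one'' --- harmless, since you correctly note that both must be equalities; and you should state explicitly that ``admissible'' includes the $k=0$ tail, i.e. $a\le(\delta_\ell)_{\ell=1}^{\infty}$ itself, since that is exactly what you use to locate the first discrepancy index $p$ for the maximizer $a^*$.
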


Given $m\ge 2$, let $\overline{d}=m-1-d$ be the \emph{reflection of
the digit $d\in\Omega_m$}. For a sequence
$(a_\ell)_{\ell=1}^\N\in\Omega_m^\N$, let
$\overline{(a_\ell)_{\ell=1}^\N}=(\overline{a_\ell})_{\ell=1}^\N=(m-1-a_\ell)_{\ell=1}^\N$
be the \emph{reflection of the sequence
$(a_\ell)_{\ell=1}^\N\in\Omega_m^\N$}. A sequence
$(a_\ell)_{\ell=1}^\N\in\Omega_{m}^\N$ is said to be
\emph{admissible} if for all $k\ge 1$
\begin{equation*}
    \left\{
    \begin{array}{lcl}
      a_{k+1} a_{k+2}\dots<a_1 a_2\dots, &\mbox{if}&a_k<m-1\\
      \overline{a_{k+1}a_{k+2}\dots}<a_1 a_2\dots,&\mbox{if}& a_k>0.
    \end{array}
    \right.
  \end{equation*}
Let $(\tau_\ell)_{\ell=0}^\N\in\Omega_2^\N$ be the classical
Thue-Morse sequence, i.e., $\tau_0=0$, and if $\tau_\ell$ is already
defined for some $\ell\ge 0$, set $\tau_{2\ell}=\tau_\ell$ and
$\tau_{2\ell+1}=\overline{\tau_\ell}=1-\tau_\ell$. Then the sequence
$(\tau_\ell)_{\ell=0}^\infty$ begins as follows
\begin{equation*}
0~1101~0011~0010~1101~0010~1100~1101~0011~0010~1100\dots.
\end{equation*}
We construct a sequence
$(\lambda_\ell)_{\ell=1}^\N=(\lambda_\ell(m))_{\ell=1}^\N\in\Omega_{m}^\N$
for the even and odd numbers $m$ respectively.
\begin{equation}\label{eq:general lambda_ell}
\begin{array}{lcl}
{\rm(I)}.\quad\lambda_\ell=q-1+\tau_\ell
 ~~\mbox{for} ~~\ell\ge 1, &\mbox{if}& ~m=2q ~~\mbox{with}~~ q\ge 1;\\
{\rm(II)}.\quad\lambda_\ell=q+\tau_\ell-\tau_{\ell-1} ~~\mbox{for} ~~\ell\ge 1,
&\mbox{if}& ~m=2q+1~ ~\mbox{with} ~~q\ge 1.
\end{array}
\end{equation}
Komornik and Loreti~\cite{Komornik  Loreti} showed that
$(\lambda_\ell)_{\ell=1}^\N$ is the smallest admissible sequence in
$\Omega_{m}^\N$ in the sense of lexicographical order. Moreover,
they gave the following proposition.

\begin{proposition}[Komornik and Loreti~\cite{Komornik  Loreti}]\label{prop:small admissible}
  Let $(\lambda_\ell)_{\ell=1}^\N\in\Omega_m^\N$ be defined in \rm{(\ref{eq:general lambda_ell})}. Then for all $k\ge 1$
\begin{equation*}
  \lambda_{k+1}\lambda_{k+2} \dots <\lambda_1\lambda_2\dots,\quad\overline{\lambda_{k+1}\lambda_{k+2}\dots}<\lambda_{1}\lambda_2\dots.
\end{equation*}
\end{proposition}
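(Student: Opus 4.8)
The whole statement will be reduced to purely lexicographical properties of the Thue--Morse fixed point $\mathbf t=(\tau_\ell)_{\ell=0}^\N$, exploited through its substitution structure. Write $\mu$ for the substitution $0\mapsto 01,\ 1\mapsto 10$; then $\mathbf t=\mu(\mathbf t)$ (applied letterwise), $\mu$ is \emph{strictly} order preserving for the lexicographical order (since $\mu(0)=01<10=\mu(1)$ and both blocks have length $2$), and $\overline{\mu(x)}=\mu(\overline x)$ for every $x\in\Omega_2^\N$. The engine of every step is the splitting of a shift by the parity of its length: writing $\sigma$ for the shift, one has $\sigma^{2i}\mathbf t=\mu(\sigma^i\mathbf t)$ and $\sigma^{2i+1}\mathbf t=\overline{\tau_i}\,\mu(\sigma^{i+1}\mathbf t)$. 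These identities let me compare a shifted sequence with a reference sequence by peeling off one or two leading letters and then invoking a strictly shorter instance, so all inequalities below follow by strong induction on the shift length, the base cases being settled by inspecting two or three initial digits.

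\emph{Case (I): $m=2q$.} Since $\lambda_\ell=q-1+\tau_\ell$ is an order isomorphism onto $\{q-1,q\}$ and $\overline{\lambda_\ell}=(q-1)+\overline{\tau_\ell}$, both inequalities are exactly the $m=2$ statements for $u:=(\tau_\ell)_{\ell=1}^\N=\sigma\mathbf t$, namely $\sigma^k u<u$ and $\overline{\sigma^k u}<u$ for all $k\ge 1$. I will establish the two auxiliary facts that for every $j\ge 0$ one has $\sigma^j\mathbf t\le u$ (with equality only for $j=1$) and $\overline{\sigma^j\mathbf t}<u$. Each follows by the parity splitting: for even $j$ the leading block of $\mu(\sigma^{j/2}\mathbf t)$, respectively of its reflection, is compared against the first two letters $1\,1$ of $u=1\mu(u)$ and the strict inequality is read off at once; for odd $j$ one either wins on the first letter or, using the monotonicity of $\mu$, reduces $\sigma^j\mathbf t\lessgtr u$ to $\sigma^{(j+1)/2}\mathbf t\lessgtr u$ (and likewise for the reflected version), a strictly shorter shift. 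Applying these with $j=k+1\ge 2$ gives Case (I).

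\emph{Case (II): $m=2q+1$.} Here $\lambda_\ell=q+d_\ell$ with $d_\ell:=\tau_\ell-\tau_{\ell-1}\in\{-1,0,1\}$, and $\overline{\lambda_\ell}=q-d_\ell$. As $d\mapsto q+d$ is order preserving, the two inequalities become $\sigma^k d<d$ and $-\sigma^k d<d$ (component-wise negation, then lexicographical comparison), where $d=(d_\ell)_{\ell=1}^\N$. The decisive tool is the telescoping identity $\sum_{\ell=1}^n d_\ell=\tau_n$. If $\tau_k=1$ then $d_{k+1}\le 0<1=d_1$ and $\sigma^k d<d$ is immediate; if $\tau_k=0$, then agreement $d_{k+\ell}=d_\ell$ for $\ell<\ell^\ast$ telescopes to $\tau_{k+\ell}=\tau_\ell$ for $\ell<\ell^\ast$, and the first discrepancy carries the same sign as $\tau_{k+\ell^\ast}-\tau_{\ell^\ast}$, whence $\sigma^k d<d\iff\sigma^k\mathbf t<\mathbf t$. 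Symmetrically, $-\sigma^k d<d$ is immediate unless $\tau_k=1$, in which case the same telescoping turns the agreement into $\tau_{k+\ell}=\overline{\tau_\ell}$ for $\ell<\ell^\ast$ and gives $-\sigma^k d<d\iff\sigma^k\mathbf t>\overline{\mathbf t}$. Thus Case (II) reduces to two extremal properties of $\mathbf t$: (A) $\tau_k=0\Rightarrow\sigma^k\mathbf t<\mathbf t$, and (B) $\tau_k=1\Rightarrow\sigma^k\mathbf t>\overline{\mathbf t}$ (equivalently $\overline{\sigma^k\mathbf t}<\mathbf t$), for all $k\ge 1$. Both are proved by the same parity induction as in Case (I), now comparing against $\mathbf t=01\,\mu(\sigma\mathbf t)$ and $\overline{\mathbf t}=10\,\mu(\sigma\overline{\mathbf t})$ respectively; in each parity one either decides on the first one or two letters or reduces to a strictly shorter shift via the monotonicity of $\mu$.

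\emph{Main obstacle and strictness.} The genuinely delicate part is Case (II): because the digit $\lambda_\ell$ records the \emph{increment} $\tau_\ell-\tau_{\ell-1}$ rather than $\tau_\ell$ itself, a lexicographical comparison of the $\lambda$'s does not transfer to a single comparison of Thue--Morse sequences, and it is precisely the telescoping identity that repairs this, converting first-difference information on $d$ into first-difference information on $\mathbf t$ (for (A)) or on $\overline{\mathbf t}$ (for (B)). Finally, all inequalities are strict: the parity inductions terminate at base cases that are already strict, and strictness propagates upward because $\mu$ is strictly monotone; alternatively one notes that $\mathbf t$ is aperiodic, so no proper shift or reflected shift can equal the reference sequence, upgrading every $\le$ to $<$.
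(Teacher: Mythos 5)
The paper offers no internal proof of Proposition \ref{prop:small admissible} to compare against: it is quoted from Komornik and Loreti, and the only trace of its proof in the paper is the later appeal to their finite-block Lemmas 5.4 and 5.5 inside Lemma \ref{Lemma:6}. So the question is whether your self-contained argument stands on its own, and it does. Your reductions are sound: for $m=2q$ both inequalities are exactly $\sigma^{k+1}\mathbf t<\sigma\mathbf t$ and $\overline{\sigma^{k+1}\mathbf t}<\sigma\mathbf t$; for $m=2q+1$ the telescoping identity $\sum_{\ell=1}^{n}(\tau_\ell-\tau_{\ell-1})=\tau_n$ really does make the first discrepancy of the increment sequences coincide (in place and in sign) with the first discrepancy of $\sigma^k\mathbf t$ against $\mathbf t$ when $\tau_k=0$, and against $\overline{\mathbf t}$ when $\tau_k=1$, the remaining cases being immediate because $d_1=1$ is the maximal digit. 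The parity splittings $\sigma^{2i}\mathbf t=\mu(\sigma^i\mathbf t)$, $\sigma^{2i+1}\mathbf t=\overline{\tau_i}\,\mu(\sigma^{i+1}\mathbf t)$, the strict monotonicity of $\mu$, and $\overline{\mu(x)}=\mu(\overline x)$ are all correct; I checked that every branch of your inductions either closes on a prefix of length at most three or recurses to a strictly smaller shift with the side condition propagating (since $\tau_{2i}=\tau_i$), and strictness survives since all base cases are strict (aperiodicity gives it independently). This is a genuinely different route from Komornik--Loreti's original argument, which is an induction on dyadic blocks of length $2^n$ driven by $\lambda_{2^n+\ell}=\overline{\lambda_\ell}$ and $\lambda_{2^{n+1}}=\overline{\lambda_{2^n}}+1$. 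Their block form is stronger in one respect the present paper exploits: Lemma \ref{Lemma:6} needs inequalities between finite segments, which your infinite-sequence statement does not immediately yield; in exchange, your substitution-plus-telescoping proof is shorter, treats even and odd $m$ uniformly through two extremal properties of the Thue--Morse word, and avoids all bookkeeping with $2$-adic block positions.
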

 For a more general digit set $\Omega$, there also
exist some results on the smallest admissible
 sequence which is related to the Thue-Morse sequence (cf.~\cite{Allouche
 Frougny}). 

The following important theorem on the set
\begin{equation*}
\mathcal{A}_{\beta,m}:=\big\{x\in [0,x_{\max}]:~
x=\sum_{\ell=1}^\N\ep_\ell\beta^\ell,~\ep_\ell\in\Omega_m~\mbox{ has
a unique}~ \beta\mbox{-expansion}\big\}
\end{equation*}
 is due to Parry~\cite{Parry}, Erd\"{o}s et al.~\cite{Erdos}, Komornik et
al.~\cite{Komornik  Loreti} and de Vries et al.~\cite{Vries
Komornik}.
\begin{theorem}\label{th:A}
  Given $m\ge 2$ and $\beta\in(1/m,1)$, let $(\delta_\ell)_{\ell=1}^\N$ be the quasi-greedy
   $\beta$-expansion of $1$ with digit set $\Omega_m$. Then $\sum_{\ell=1}^\N \ep_\ell\beta^\ell\in\mathcal{A}_{\beta,m}$
   if and only if for all $k\ge 1$
  \begin{equation*}
    \left\{
    \begin{array}{lcl}
      \ep_{k+1}\ep_{k+2}\dots<\delta_1\delta_2\dots, &\mbox{if}&\ep_k<m-1\\
      \overline{\ep_{k+1}\ep_{k+2}\dots}<\delta_1\delta_2\dots,&\mbox{if}& \ep_k>0.
    \end{array}
    \right.
  \end{equation*}
\end{theorem}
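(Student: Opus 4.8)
The plan is to show that $x=\sum_{\ell=1}^\infty\epsilon_\ell\beta^\ell$ lies in $\mathcal{A}_{\beta,m}$ if and only if $(\epsilon_\ell)_{\ell=1}^\infty$ is \emph{simultaneously} the greedy and the lazy $\beta$-expansion of $x$. Among all $\beta$-expansions of a fixed $x$, the greedy expansion is the lexicographically largest and the lazy expansion the lexicographically smallest; hence $x$ has a unique expansion precisely when these two coincide, which (since the given sequence lies lexicographically between them) happens exactly when $(\epsilon_\ell)_{\ell=1}^\infty$ equals both. So it suffices to translate the property ``greedy'' into the first displayed inequality and ``lazy'' into the second.

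First I would treat the greedy direction in terms of values. Because $\beta\in(1/m,1)$, we have $x_{\max}\ge 1$ and the attractor of the underlying IFS is the \emph{full} interval $[0,x_{\max}]$, so every value in $[0,\beta^k x_{\max}]$ is realized by some tail starting at level $k+1$. A carrying argument then shows that a digit with $\epsilon_k<m-1$ can be raised by $1$ while compensating in the tail if and only if $\sum_{\ell=1}^\infty\epsilon_{k+\ell}\beta^\ell\ge 1$. Consequently $(\epsilon_\ell)_{\ell=1}^\infty$ is greedy if and only if, for every $k$ with $\epsilon_k<m-1$, the value inequality $\sum_{\ell=1}^\infty\epsilon_{k+\ell}\beta^\ell<1$ holds.

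The hard part will be converting this family of value inequalities into the single lexicographic condition $\epsilon_{k+1}\epsilon_{k+2}\dots<\delta_1\delta_2\dots$. The naive term-by-term equivalence ``value ${}<1\iff{}$ lexicographically ${}<\delta$'' is genuinely false (one can build $(c_\ell)<(\delta_\ell)$ with value exceeding $1$), so the passage must exploit the global structure. Here I would use Proposition~\ref{prop: quasi greedy expansion}, namely $\delta_{k+1}\delta_{k+2}\dots\le\delta_1\delta_2\dots$ for all $k$, together with the monotonicity principle that on the class of sequences all of whose shifts are $\preccurlyeq(\delta_\ell)_{\ell=1}^\infty$ the lexicographic order agrees with the order of values. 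The key point that makes this run is that, at the first index $n$ where a tail disagrees with $(\delta_\ell)_{\ell=1}^\infty$, the offending digit is strictly below $m-1$, so the greedy hypothesis may be reapplied to the shifted tail; a telescoping of these comparisons, combined with $\sum_{\ell=1}^\infty\delta_\ell\beta^\ell=1$, yields both implications. This monotonicity lemma is exactly where the main obstacle lies, and it is the content furnished by the cited results of Parry~\cite{Parry} and de Vries--Komornik~\cite{Vries Komornik}.

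Finally, the lazy characterization follows from the greedy one by the reflection symmetry $d\mapsto\overline{d}=m-1-d$: the lazy expansion of $x$ is the reflection of the greedy expansion of $x_{\max}-x$. Applying the greedy characterization just obtained to $(\overline{\epsilon_\ell})_{\ell=1}^\infty$ and observing that $\overline{\epsilon_k}<m-1$ is the same as $\epsilon_k>0$, the greedy condition on the reflected sequence becomes precisely $\overline{\epsilon_{k+1}\epsilon_{k+2}\dots}<\delta_1\delta_2\dots$ whenever $\epsilon_k>0$. Combining the greedy and lazy characterizations gives the stated equivalence.
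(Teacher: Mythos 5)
The paper contains no proof of Theorem \ref{th:A} at all: it is imported verbatim from the literature, attributed to Parry, Erd\H{o}s--Jo\'o--Komornik, Komornik--Loreti and de Vries--Komornik. So the real comparison is between your sketch and the argument in those cited works --- and your sketch is, in outline, exactly that argument. The decomposition is the standard one: uniqueness of the expansion is equivalent to the sequence being simultaneously the greedy (lexicographically largest) and the lazy (lexicographically smallest) expansion of its value; because $\beta>1/m$ makes every value in $[0,\beta^k x_{\max}]$ realizable by a tail, greediness is equivalent to the value inequalities $\sum_{\ell\ge 1}\varepsilon_{k+\ell}\beta^\ell<1$ at indices with $\varepsilon_k<m-1$; these are converted into the lexicographic inequalities by a Parry-type lemma; and the lazy half follows from the greedy one via the reflection $d\mapsto m-1-d$. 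You also correctly isolate the only delicate point (lexicographic order and order of values do not agree termwise), and the mechanism you describe for it is the right one: at the first index where a tail disagrees with $(\delta_\ell)_{\ell=1}^\infty$ the relevant digit is automatically $<m-1$, so the hypothesis reapplies to the shifted tail, and a telescoping/infinite-descent argument using $\sum_{\ell=1}^\infty\delta_\ell\beta^\ell=1$ together with the strict inequalities $\sum_{\ell=1}^{n}\delta_\ell\beta^\ell<1$ furnished by the quasi-greedy algorithm closes both implications. Two caveats. First, as written you ultimately invoke this ``monotonicity lemma'' by citing Parry and de Vries--Komornik rather than proving it, so your proposal is a correct reduction to the cited results rather than a self-contained replacement of them --- which is no worse than what the paper itself does, but should be acknowledged as such. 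Second, two slips of terminology: $\preccurlyeq$ in this paper denotes the coordinatewise partial order of Section 3, not the lexicographic order you intend; and in the direction ``lexicographic $\Rightarrow$ value'' it is the lexicographic hypothesis, not the ``greedy hypothesis,'' that is reapplied at the first disagreement (the greedy value hypothesis is what gets reapplied in the converse direction, where one instead needs the bound $\sum_{j\ge 1}\delta_{n+j}\beta^j\le 1$ on the tails of the quasi-greedy expansion, itself a consequence of Proposition \ref{prop: quasi greedy expansion} and the same descent).
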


For $m\ge 2$, let $\beta_{c,m}$ be the unique positive solution of
the following equation
 \begin{equation}\label{eq:critical point}
  1=\sum_{\ell=1}^\N \lambda_\ell\beta^\ell,
\end{equation}
where
$(\lambda_\ell)_{\ell=1}^\N=(\lambda_{\ell}(m))_{\ell=1}^\N\in\Omega_m^\N$
is defined in (\ref{eq:general lambda_ell}).
 We remark here that $\beta_{c,m}$ is a transcendental number for all $m\ge 2$ (cf.~\cite{Komornik  Loreti}).
For $m=2$, Glendinning and Sidorov~\cite{Glendinning Sidorov} have
shown that the critical point for $\mathcal{A}_{\beta,2}$ is
$\beta_{c,2}$, i.e., $\mathcal{A}_{\beta,2}$ has positive Hausdorff
dimension if $\beta<\beta_{c,2}$ and $\mathcal{A}_{\beta,2}$
contains at most countably many elements if $\beta>\beta_{c,2}$.
Their results can be generalized to the even number case, i.e., for
an even number $m\ge 2$, the critical point for
$\mathcal{A}_{\beta,m}$ is $ \beta_{c,m}$. However, it is more
intricate to find the critical point for $\mathcal{A}_{\beta,m}$ for
an odd number $m$. Inspired by \cite{Glendinning Sidorov} we show
that for an odd number $m\ge 3$, the critical point for
$\mathcal{A}_{\beta,m}$ is still $\beta_{c,m}$, the unique positive
solution of Equation $(\ref{eq:critical point})$.

Given $N\ge 2$ and $\beta\in(1/(2N-1),1/N)$, we will find the
critical point for $\UPN$, which is the set of $t\in[-1,1]$ having a
unique $\OPN$-code.

To make the connection with the theory of beta-expansions we shift
$\OPN$ to the set
\begin{equation*}
\OPN+N-1=\{0,1,\dots,2N-2\}=\OTN.
\end{equation*}
 Thus from
$[-1,1]=\pf\big(\OPN^\N\big)$ it follows that
\begin{equation*}
[0,2]=\ps\big(\OTN^\infty\big)=\left\{ \sum_{\ell=1}^\infty
\frac{\ep_\ell\beta^{\ell-1}(1-\beta)}{N-1}:
\ep_\ell\in\{0,1,\dots,2N-2\}\right\},
\end{equation*}
where $\ps:=\pi_{\Omega_{2N-1}}$ is as in (\ref{eq:pi}). Let
\begin{equation*}
\UTN:=\left\{t\in [0,2]: |\ps^{-1}(t)|=1\right\},
\end{equation*}
i.e., the set of $t\in[0,2]$ having a unique $\OTN$-code. Thus, it
is easy to see that
\begin{equation*}
 \UTN=\UPN+1.
\end{equation*}
  For $\beta\in(1/(2N-1),1/N)$,
note that
\begin{equation*}
x\in\mathcal{A}_{\beta,2N-1}\quad
\Longleftrightarrow\quad\frac{1-\beta}{\beta(N-1)}x\in\UTN.
\end{equation*}
Thus Theorem \ref{th:A} yields the the following important theorem
which could also be shown in a different way by using
(\ref{tunique}).

\begin{theorem}\label{th:tunique_shift}
  Given $N\ge 2$ and $\beta\in(1/(2N-1),1/N)$, let $(\delta_\ell)_{\ell=1}^\N$ be the quasi-greedy $\beta$-expansion of $1$ with digit set
  $\OTN$. Then $(\ep_\ell)_{\ell=1}^\N\in\ps^{-1}(\UTN)$ if and only if
for all $k\ge 1$
\begin{equation}\label{tunique_shift}
\left\{
\begin{array}{lcl}
      \ep_{k+1}\ep_{k+2}\dots<\delta_1\delta_2\dots, &\mbox{if}& \ep_k\in\{0,\dots,2N-3\}\\
      \overline{\ep_{k+1}\ep_{k+2}\dots}<\delta_1\delta_2\dots,&\mbox{if}& \ep_k\in\{1,\dots,2N-2\},
\end{array}
\right.
    \end{equation}
 where $\overline{\ep_{k+1}\ep_{k+2}\dots}$ is the
reflection of $\ep_{k+1}\ep_{k+2}\dots\in\OTN^\N$.
\end{theorem}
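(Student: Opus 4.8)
The plan is to deduce the theorem from Theorem~\ref{th:A} applied with $m=2N-1$, by exploiting the linear identification between $\OTN$-codes and $\beta$-expansions with digit set $\OTN$ recorded just before the statement. The point is that, after the rescaling $x=\frac{\beta(N-1)}{1-\beta}\,t$, the property ``$t$ has a unique $\OTN$-code'' is \emph{literally} the same condition as ``$x$ has a unique $\beta$-expansion,'' so the characterization of $\mathcal{A}_{\beta,2N-1}$ in Theorem~\ref{th:A} transfers verbatim.

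First I would spell out the dictionary. For a string $(\ep_\ell)_{\ell=1}^\N\in\OTN^\N$, the definition (\ref{eq:pi}) of $\ps$ gives
\begin{equation*}
\ps\big((\ep_\ell)_{\ell=1}^\N\big)=\sum_{\ell=1}^\N\frac{\ep_\ell\beta^{\ell-1}(1-\beta)}{N-1}=\frac{1-\beta}{\beta(N-1)}\sum_{\ell=1}^\N\ep_\ell\beta^\ell,
\end{equation*}
so that $\ps\big((\ep_\ell)_{\ell=1}^\N\big)=t$ is equivalent to $\sum_{\ell=1}^\N\ep_\ell\beta^\ell=x$ with $x=\frac{\beta(N-1)}{1-\beta}\,t$. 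Hence the $\OTN$-codes of $t$ are exactly the $\beta$-expansions (with digit set $\OTN$) of $x$; these are the same set of strings, not merely a correspondence. In particular $|\ps^{-1}(t)|$ equals the number of $\beta$-expansions of $x$, and since $t\mapsto x$ is a linear bijection of $[0,2]=\ps(\OTN^\N)$ onto $[0,x_{\max}]$, we get $t\in\UTN$ iff $x\in\mathcal{A}_{\beta,2N-1}$, i.e. $(\ep_\ell)_{\ell=1}^\N\in\ps^{-1}(\UTN)$ iff $\sum_{\ell=1}^\N\ep_\ell\beta^\ell\in\mathcal{A}_{\beta,2N-1}$. Because the quasi-greedy $\beta$-expansion $(\delta_\ell)_{\ell=1}^\N$ of $1$ depends only on $\beta$ and on $\OTN$, it is the same sequence in both theorems.

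With this identification in place I would simply invoke Theorem~\ref{th:A} for $\Omega_m$ with $m=2N-1$. That theorem characterizes membership in $\mathcal{A}_{\beta,2N-1}$ by the two lexicographic inequalities under the case split $\ep_k<m-1$ and $\ep_k>0$; substituting $m-1=2N-2$ rewrites these as $\ep_k\in\{0,\dots,2N-3\}$ and $\ep_k\in\{1,\dots,2N-2\}$, which is exactly (\ref{tunique_shift}). In this route there is essentially no obstacle beyond bookkeeping, precisely because the $\OTN$-codes of $t$ and the $\beta$-expansions of $x$ are the same strings, so uniqueness needs no separate transfer argument.

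Finally, for the alternative self-contained route mentioned in the text (via (\ref{tunique})), I would begin from condition (C) of Theorem~\ref{th:1a}, substitute $\ep_\ell=t_\ell+N-1$ to pass from $\OPN$ to $\OTN$, and simplify. Using $\sum_{\ell=1}^\N t_{k+\ell}\beta^\ell=\sum_{\ell=1}^\N\ep_{k+\ell}\beta^\ell-(N-1)\beta/(1-\beta)$, the inequality $\sum_{\ell=1}^\N t_{k+\ell}\beta^\ell<(1-N\beta)/(1-\beta)$ for the case $t_k<N-1$ becomes $\sum_{\ell=1}^\N\ep_{k+\ell}\beta^\ell<1$, and the case $t_k>1-N$ is symmetric after reflecting the tail. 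The genuine work in this route is translating these \emph{value} inequalities into the \emph{lexicographic} inequalities of (\ref{tunique_shift}), where the properties of the quasi-greedy expansion of $1$ (Propositions~\ref{prop: strictly decreasing of quasi expansion}--\ref{prop: quasi greedy expansion}) are needed; this is the step I expect to be the most delicate, and it is exactly the reason the route through Theorem~\ref{th:A} is preferable.
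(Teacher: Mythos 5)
Your proposal is correct and follows exactly the paper's own route: the paper proves Theorem~\ref{th:tunique_shift} precisely by noting the rescaling dictionary $x\in\mathcal{A}_{\beta,2N-1}\iff\frac{1-\beta}{\beta(N-1)}x\in\UTN$ (the identification of $\OTN$-codes of $t$ with $\beta$-expansions of $x$ that you spell out) and then invoking Theorem~\ref{th:A} with $m=2N-1$. Even your closing remark mirrors the paper, which notes that the theorem ``could also be shown in a different way by using (\ref{tunique})'' but does not pursue that more delicate route.
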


Therefore, dealing with the set $\UPN$ is equivalent to dealing with
the set of sequences $(\ep_\ell)_{\ell=1}^\N\in\OTN^\N$ which
satisfy (\ref{tunique_shift}). Substituting $m=2N-1$ in
(\ref{eq:general lambda_ell}), we get the smallest admissible
sequence $(\lambda_\ell)_{\ell=1}^\infty\in\OTN^\N$ which starts
with
\begin{equation*}
 N(N-1)(N-2)N\quad (N-2)(N-1)N(N-1)\quad (N-2)(N-1)N(N-2)\dots.
\end{equation*}
 It is helpful to give another equivalent definition of the sequence $(\lambda_\ell)_{\ell=1}^\infty\in\OTN^\N$
  (cf.~\cite{Komornik  Loreti}), i.e.,
\begin{equation}\label{lambda sequence}
\begin{array}{l}
 \lambda_1=N,\quad \lambda_{2^{n+1}}=\overline{\lambda_{2^n}}+1=2N-1-\lambda_{2^n}\quad\mbox{for}~ n=0,1,\dots,\\
\lambda_{2^n+\ell}=\overline{\lambda_\ell}=2N-2-\lambda_\ell \quad\quad\quad\mbox{for}~ 1\le \ell<2^n,~ n=1,2,\dots.
\end{array}
\end{equation}
 So it is easy to see $\lambda_{2^n}=N$ for $n=0,2,4,\dots$ and
$\lambda_{2^n}=N-1$ for $n=1,3,5,\dots$.

\begin{theorem}\label{th:critical point}
Given $N\ge 2, ~\beta\in(1/(2N-1),1/N)$, let $\UPN$ be the set of
$t\in[-1,1]$ having a unique $\OPN$-code and
$\beta_c\in(1/(2N-1),1/N)$ be the unique positive solution of
Equation \rm{(\ref{eq:critical point})} with
$(\lambda_\ell)_{\ell=1}^\N\in\OTN^\N$ defined in \rm{(\ref{lambda
sequence})}. Then

 \rm{(1)} If $\beta\in(1/(2N-1),\beta_c)$, then $\dim_H \UPN>0$;

 \rm{(2)} If $\beta=\beta_c$, then $|\mathcal{U}_{\beta_c,\pm N}|=2^{\aleph_0}$
 and $\dim_H\mathcal{U}_{\beta_c,\pm N}=0$;

\rm{(3)} If $\beta\in(\beta_c,1/N)$, then $|\UPN|=\aleph_0$.
\end{theorem}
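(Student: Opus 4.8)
The plan is to transfer the problem entirely into the symbolic language of $\beta$-expansions developed above. Since $\UTN=\UPN+1$ and the affine map $x\mapsto \frac{1-\beta}{\beta(N-1)}x$ carries $\mathcal{A}_{\beta,2N-1}$ bijectively onto $\UTN$, the Hausdorff dimension and the cardinality of $\UPN$ agree with those of the symbolic set $\Sigma:=\ps^{-1}(\UTN)$, which by Theorem \ref{th:tunique_shift} is precisely the set of $(\ep_\ell)_{\ell=1}^\N\in\OTN^\N$ obeying (\ref{tunique_shift}). As these are unique codes, $\ps$ is a bijection of $\Sigma$ onto $\UTN$ contracting lengths by the factor $\beta$ per symbol; consequently $\dim_H\UPN=\dim_H\ps(\Sigma)$, and $\dim_H\ps(\Sigma)$ is squeezed between $h_{\mathrm{top}}(\Sigma)/(-\log\beta)$ from above and the corresponding quantity for positive-entropy subsystems from below. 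Thus the whole theorem reduces to locating the base at which the entropy of $\Sigma$ drops to zero and at which $|\Sigma|$ passes from uncountable to countable. To pin down $\beta_c$ I first note that, by Komornik--Loreti's Proposition \ref{prop:small admissible}, the sequence $(\lambda_\ell)$ of (\ref{lambda sequence}) satisfies $\lambda_{k+1}\lambda_{k+2}\dots<\lambda_1\lambda_2\dots$ for every $k$; hence by Proposition \ref{prop: quasi greedy expansion} it is the quasi-greedy expansion of $1$ for that base which realizes $1=\sum_\ell\lambda_\ell\beta^\ell$, namely $\beta_c$. Therefore $(\delta_\ell(\beta_c))=(\lambda_\ell)$, and Parry's strict monotonicity and continuity (Proposition \ref{prop: strictly decreasing of quasi expansion}) yield the dichotomy used throughout: $(\delta_\ell(\beta))>(\lambda_\ell)$ when $\beta<\beta_c$ and $(\delta_\ell(\beta))<(\lambda_\ell)$ when $\beta>\beta_c$.

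The dimension statements are governed by $h_{\mathrm{top}}(\Sigma)$. When $\beta<\beta_c$ the strict gap $\delta>\lambda$ leaves room to embed a full two-symbol shift into $\Sigma$: I would construct two distinct blocks $B_0,B_1\in\OTN^p$ of a common length $p$, assembled from an initial segment of $(\lambda_\ell)$ and its reflection, such that every concatenation $B_{i_1}B_{i_2}\cdots$ with $i_j\in\{0,1\}$ satisfies both lines of (\ref{tunique_shift}). Verifying admissibility reduces to comparing finitely many shifts of such a concatenation (and of its reflection) against $\delta_1\delta_2\dots$, which becomes possible once $p$ is large, precisely because $\delta$ sits strictly above $\lambda$. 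This embeds the full $2$-shift, whose image under $\ps$ is a self-similar set satisfying the strong separation condition and generated by two ratio-$\beta^p$ maps; hence $\dim_H\UPN\ge \log 2/(-p\log\beta)>0$, proving (1). When $\beta\ge\beta_c$, on the other hand, $\Sigma$ is contained in the subshift whose sequences, together with their reflections, are lexicographically bounded by the Thue--Morse--type sequence $(\lambda_\ell)$; this subshift has zero topological entropy, so $\dim_H\ps(\Sigma)\le h_{\mathrm{top}}(\Sigma)/(-\log\beta)=0$, giving the dimension assertions in (2) and (3).

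The cardinality statements require separate combinatorial work. At $\beta=\beta_c$ (so $\delta=\lambda$) I would exhibit a Cantor set of admissible sequences to obtain $|\mathcal{U}_{\beta_c,\pm N}|=2^{\aleph_0}$: exploiting the self-referential recursion (\ref{lambda sequence}) for $(\lambda_\ell)$, one can insert a free binary choice at each of a sparse, Thue--Morse--indexed family of positions without violating (\ref{tunique_shift}) with bound $\lambda$, producing continuum-many distinct sequences (all of zero-entropy type, consistent with $\dim_H=0$). For $\beta>\beta_c$ (so $\delta<\lambda$) I would instead prove countability. Since $(\lambda_\ell)$ is the smallest admissible sequence and $\delta$ lies strictly below it, the bound in (\ref{tunique_shift}) is so stringent that no sequence in $\Sigma$ can carry the branching patterns needed to produce uncountably many tails; a comparison of each tail and its reflection against $\delta$ forces every admissible sequence to be eventually periodic. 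As there are only countably many eventually periodic words over $\OTN$---and at least countably many of them lie in $\Sigma$---we get $|\UPN|=\aleph_0$, proving (3).

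The hard part will be the odd alphabet $m=2N-1$. In case (II) of (\ref{eq:general lambda_ell}) the Komornik--Loreti sequence is built from $\tau_\ell-\tau_{\ell-1}$ rather than from $\tau_\ell$ alone, so the interleaving of tails of $(\lambda_\ell)$ with their reflections is subtler than in the even case that merely generalizes Glendinning--Sidorov. Both the explicit block construction behind the full-shift embedding in part (1) and the forced-eventual-periodicity argument in part (3) hinge on a delicate lexicographic analysis of these tails, and this is where the genuinely new effort is concentrated.
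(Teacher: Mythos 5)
Your outline does follow the paper's skeleton --- transfer to $\ps^{-1}(\UTN)$ via Theorem \ref{th:tunique_shift}, identify $(\delta_\ell(\beta_c))_{\ell=1}^\N=(\lambda_\ell)_{\ell=1}^\N$, embed a positive-entropy subshift when $\beta<\beta_c$, and force eventual periodicity when $\beta>\beta_c$ --- but what you have written is a plan in which every load-bearing step is asserted or explicitly deferred, and one of those assertions is false as stated. For Part (1), you claim that two blocks built from a prefix of $(\lambda_\ell)_{\ell=1}^\N$ and its reflection can be \emph{freely} concatenated, the admissibility check ``becoming possible once $p$ is large''. Largeness of $p$ does not suffice: take the natural block $\xi_n=(N-1)\lambda_1\dots\lambda_{2^n-1}$ with $n$ odd (this is exactly the paper's block). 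Since $\lambda_{2^n-1}=N$ and $\lambda_1\lambda_2\lambda_3=N(N-1)(N-2)$, the word $\xi_n\xi_n$ contains, starting two symbols before the seam, the factor $\lambda_{2^n-1}\,(N-1)\,\lambda_1\dots=N(N-1)N\dots$, whose preceding digit is $\lambda_{2^n-2}=N-1<2N-2$; this tail exceeds $\delta_1\delta_2\delta_3$ whenever $\delta(\beta)$ agrees with $\lambda$ in its first three digits (true for all $\beta$ close to $\beta_c$), violating (\ref{tunique_shift}). This is precisely why the paper does \emph{not} use a free concatenation but a subshift of finite type over $\{\xi_n,\eta_n,\overline{\xi_n},\overline{\eta_n}\}$ with forbidden transitions (the matrix $A$, in which $\xi_n\to\xi_n$ is disallowed), and why the lexicographic Lemmas \ref{Lemma:6}, \ref{lemma:7}, \ref{lemma:7'} and \ref{lemma:8} --- including the equality cases $\overline{\lambda_k\dots\lambda_{2^n-1}}=\lambda_1\dots\lambda_{2^n-k}$, which strictness of $\delta>\lambda$ alone does not resolve --- are needed. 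A full $2$-shift on two equal-length words can indeed be extracted afterwards from admissible paths of that SFT, but someone must prove those lemmas, and your proposal does not.

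Parts (2) and (3) rest entirely on two statements you never prove: that for $\beta\ge\beta_c$ the subshift enveloping $\ps^{-1}(\UTN)$ has zero topological entropy, and that for $\beta>\beta_c$ every element of $\ps^{-1}(\UTN)$ is eventually periodic. These are exactly the paper's Lemma \ref{Lemma:next 2^n terms} (the forcing step: after a digit $<2N-2$, an occurrence of $w_n=\lambda_1\dots\lambda_{2^n}$ forces the next $2^n$ digits to be $\overline{w_n}$ or the continuation to $w_{n+1}$) and Lemma \ref{lemma:eventually periodic}; you yourself flag them as ``where the genuinely new effort is concentrated'', i.e.\ as not done. Note also that the paper does not get $\dim_H\mathcal{U}_{\beta_c,\pm N}=0$ from an abstract entropy bound: it classifies the non-eventually-periodic admissible sequences at $\beta_c$ as concatenations of blocks $w_{i}\overline{w_{i}}$ with (essentially) increasing scales and uses that $|w_n|=2^n$ grows exponentially; and the lower bound $|\mathcal{U}_{\beta_c,\pm N}|=2^{\aleph_0}$ comes from the explicit family $(w_0\overline{w_0})^{k_0}(w_1\overline{w_1})^{k_1}\dots$, $k_n\in\mathbb{N}$, together with the unique-parsing fact that $w_n\overline{w_n}$ is not a concatenation of blocks $w_\ell\overline{w_\ell}$ with $\ell<n$. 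Your ``free binary choice at a sparse, Thue--Morse--indexed family of positions'' would have to be instantiated in some such form and its admissibility verified against $\delta=\lambda$ (again Lemma \ref{lemma:8}-type computations); as stated it is a heuristic, not a construction. In short: the skeleton matches the paper, but the combinatorial core --- the analogues of Lemmas \ref{Lemma:6} through \ref{lemma:eventually periodic} --- is missing, and the one shortcut you propose in its place (free concatenation for large $p$) fails.
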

Since $\UPN=\UTN-1$, the critical point of $\UPN$ is equal to the
critical point of $\UTN$. Thus we only need to show the
corresponding conclusions for the set $\UTN$.

Using Proposition \ref{prop: quasi greedy expansion} and Proposition
\ref{prop:small admissible}, we obtain
$(\delta_\ell(\beta_c))_{\ell=1}^\N=(\lambda_\ell)_{\ell=1}^\N$,
i.e., $(\lambda_\ell)_{\ell=1}^\N$ is the quasi-greedy
$\beta_c$-expansion of $1$ with digit set $\OTN$. The proof of
Theorem \ref{th:critical point} will be divided into several lemmas.

\begin{lemma}\label{Lemma:6}
 $\lambda_k\dots\lambda_{k+2^n-2}<\lambda_1\dots\lambda_{2^n-1}$ for
 any $n\ge 2$ and any $k\in\{2,\dots,2^n-1\}$;
 $\overline{\lambda_k\dots\lambda_{k+2^n-2}}<\lambda_1\dots\lambda_{2^n-1}$
 for any $n\ge 2$ and any $k\in\{1,\dots,2^n-1\}$.
\end{lemma}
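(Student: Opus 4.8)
The plan is to deduce the stated finite-block inequalities from the infinite ones in Proposition \ref{prop:small admissible} by controlling \emph{where} the first discrepancy occurs. Reindexing Proposition \ref{prop:small admissible}, for every $k\ge 2$ one has $\lambda_k\lambda_{k+1}\cdots<\lambda_1\lambda_2\cdots$, and for every $k\ge 1$ one has $\overline{\lambda_k\lambda_{k+1}\cdots}<\lambda_1\lambda_2\cdots$ (the case $k=1$ being immediate from $\overline{\lambda_1}=N-2<N=\lambda_1$). In particular the length-$(2^n-1)$ blocks compared in the lemma can never be strictly \emph{larger} than $\lambda_1\cdots\lambda_{2^n-1}$; hence each asserted inequality is equivalent to excluding equality of the two blocks, i.e.\ to showing that the first coordinate at which $\lambda_k\lambda_{k+1}\cdots$ (resp.\ its reflection) departs from $\lambda_1\lambda_2\cdots$ already lies in $\{1,\dots,2^n-1\}$. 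This localization statement is what I will actually prove.

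I would establish it by a simultaneous induction on $n$ of the two assertions, call them $(P_n)$ and $(Q_n)$, using the self-similar structure coming from \rm{(\ref{lambda sequence})}. Writing $B_n:=\lambda_1\cdots\lambda_{2^n-1}$ and using $\lambda_{2^n+\ell}=\overline{\lambda_\ell}$ for $1\le\ell\le 2^n-1$ together with $\lambda_{2^{n+1}}=\overline{\lambda_{2^n}}+1$, one gets the identity $B_{n+1}=B_n\,\lambda_{2^n}\,\overline{B_n}$ and $\lambda_{2^n+1}\cdots\lambda_{2^{n+1}-1}=\overline{B_n}$. The base case $n=2$ (blocks of length $3$, with $\lambda_1\lambda_2\lambda_3=N(N-1)(N-2)$) is a direct check. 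For the inductive step I split the starting position $k$ of a length-$(2^{n+1}-1)$ block into three ranges: $2\le k\le 2^n-1$, the boundary value $k=2^n$, and $2^n+1\le k\le 2^{n+1}-1$.

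In the first range the leading $2^n-1$ symbols of the block are $\lambda_k\cdots\lambda_{k+2^n-2}$, so $(P_n)$ (resp.\ $(Q_n)$) already forces a strict discrepancy inside $B_n$, the prefix of $B_{n+1}$; this settles those $k$ at once. In the third range, writing $k=2^n+i$ with $1\le i\le 2^n-1$ and applying $\lambda_{2^n+\ell}=\overline{\lambda_\ell}$, the relevant block turns into a reflected copy (for $(P_{n+1})$) or an un-reflected copy (for $(Q_{n+1})$) of $\lambda_i\lambda_{i+1}\cdots$, so the comparison with the prefix $B_n$ of $B_{n+1}$ is governed by the \emph{cross} hypothesis $(Q_n)$ resp.\ $(P_n)$. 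Thus $(P_{n+1})$ feeds on $(Q_n)$ and $(Q_{n+1})$ on $(P_n)$, which is exactly why the two assertions must be carried together.

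The main obstacle is the handful of boundary and near-boundary comparisons in which the leading symbols coincide and the decision is deferred to the slot sitting over the index $2^{n+1}$. There the Komornik--Loreti twist $\lambda_{2^{n+1}}=\overline{\lambda_{2^n}}+1$ produces a $\pm1$ mismatch relative to a clean reflection, so the inequality is razor-thin and cannot be read off from the block identities alone. I expect to dispatch these by hand, tracking only the explicit values $\lambda_1=N$, $\lambda_2=N-1$, $\overline{\lambda_1}=N-2$ and $\lambda_{2^n}\in\{N-1,N\}$ from \rm{(\ref{lambda sequence})}: for $k=2^n$ with $n$ even the second symbol already gives $\overline{\lambda_1}=N-2<N-1=\lambda_2$, while for $k=2^n+1$ the reflected block carries $\lambda_{2^n}-1<\lambda_{2^n}$ in the slot over $2^{n+1}$. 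The remaining thin cases either close in the same way or get pushed one block further and reabsorbed into $(P_n)$/$(Q_n)$, with Proposition \ref{prop:small admissible} guaranteeing that the pending difference can never land on the wrong side.
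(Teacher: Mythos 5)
Your route is correct in outline but genuinely different from the paper's. The shared first step is the reduction via Proposition \ref{prop:small admissible}: the infinite strict inequalities make the finite block inequalities hold weakly, so the whole task is to localize the first strict discrepancy inside the window of length $2^n-1$. The paper performs this localization with no induction: it checks $k<7$ by hand, writes every remaining $k$ dyadically as $k=2^s+2^p+j$ (resp.\ $k=2^q+j$), and quotes Lemmas 5.4 and 5.5 of Komornik--Loreti \cite{Komornik Loreti}, which pin the discrepancy down within roughly $2^{p+1}$ (resp.\ $2^q$) symbols, well inside the window since $p+1<n$ (resp.\ $q<n$). Your mutual induction on $(P_n),(Q_n)$ through the identity $B_{n+1}=B_n\,\lambda_{2^n}\,\overline{B_n}$ is self-contained --- nothing is needed beyond (\ref{lambda sequence}) and Proposition \ref{prop:small admissible} --- which is a real gain; the cross-feeding of the two hypotheses and your explicit boundary checks at $k=2^n$ and $k=2^n+1$ are correct. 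What you have not written out is precisely the hard part, and it is asymmetric: in the $(Q_{n+1})$ range $k=2^n+i$ the twist is always benign, since the reflected block carries $\overline{\lambda_{2^{n+1}}}=\lambda_{2^n}-1$ over the slot while $(P_n)$ forces the corresponding symbol of $B_{n+1}$ to be at least $\lambda_{2^n}$ there, so the slot itself decides favorably; but in the $(P_{n+1})$ range the twist $\lambda_{2^{n+1}}=\overline{\lambda_{2^n}}+1$ points against you. There, if $(Q_n)$'s discrepancy lands exactly on the slot with a tie $\lambda_{2^n-i+1}=\overline{\lambda_{2^n}}+1$, the comparison continues as $\overline{\lambda_1\lambda_2\dots}$ versus $\lambda_{2^n-i+2}\lambda_{2^n-i+3}\dots$, which is the reflection of a $(Q_n)$-type comparison with starting index $2^n-i+2\le 2^n-1$ and closes inside the remaining window when $i\ge 3$ (the cases $i=1,2$ close on the first symbols); and the dangerous alternative --- equality across the whole reflection window with $\lambda_{2^n-i+1}=\overline{\lambda_{2^n}}$ --- would force the shifted infinite sequence to exceed $\lambda_1\lambda_2\dots$ and is indeed killed by Proposition \ref{prop:small admissible}, exactly as you anticipated. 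So your plan does close, but these deferred-slot cases are the actual content of the induction (they are what the paper's appeal to Komornik--Loreti buys ready-made), and as submitted they are asserted rather than proved; a complete write-up must carry out this case analysis explicitly.
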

\begin{proof}
Since for $n=2$ the lemma is quickly checked, let $n\ge 3$ and
$k\in\{2,\dots,2^n-1\}$. Then by Proposition \ref{prop:small
admissible} $\lambda_k\lambda_{k+1}\dots<\lambda_1\lambda_2\dots$,
which implies
$\lambda_k\dots\lambda_{k+2^n-2}\le\lambda_1\dots\lambda_{2^n-1}$.
It is easy to check that
$\lambda_k\dots\lambda_{k+2^n-2}<\lambda_1\dots\lambda_{2^n-1}$ for
$k< 7$. For all other $k$ we can write $k=2^s+2^p+j$ with $1\le p<
s<n$ and $1\le j<2^p$. It follows from \cite[Lemma 5.4]{Komornik
Loreti} that
\begin{equation*}
  \lambda_k\dots\lambda_{k+2^{p+1}-j}<\lambda_{j}\dots\lambda_{2^{p+1}}\le\lambda_1\dots\lambda_{2^{p+1}-j+1}
\end{equation*}
  which implies
  $\lambda_k\dots\lambda_{k+2^n-2}<\lambda_1\dots\lambda_{2^n-1}
  $, since
  $n>p+1$.

 For the second inequality, ignoring the trivial cases $k=1$ and $2$, suppose $k=2^q+j$ with $1\le
j<2^q$ and $1\le q<n$. Then it again follows from \cite[Lemma
5.5]{Komornik Loreti} that
\begin{equation*}
\overline{\lambda_k\dots\lambda_{k+2^q-j}}<\lambda_{j}\dots\lambda_{2^q}\le\lambda_1\dots\lambda_{2^q-j+1}.
\end{equation*}
which implies that
$\overline{\lambda_k\dots\lambda_{k+2^n-2}}<\lambda_1\dots\lambda_{2^n-1}$,
since $n>q$.
\end{proof}

\begin{lemma}\label{lemma:7}
Let $n\ge 3$ be an odd integer. If
 $\overline{\lambda_k\dots\lambda_{2^n-1}}=\lambda_1\dots\lambda_{2^n-k}$
 for some $k\in\{1,\dots,2^n-1\}$, then $\lambda_{2^n-k+1}=N$.
  \end{lemma}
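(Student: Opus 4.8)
The plan is to reduce the statement to the two lexicographic inequalities of Proposition \ref{prop:small admissible} together with the block inequality of Lemma \ref{Lemma:6}. I would set $p:=2^n-k+1$, so that the desired conclusion becomes simply $\lambda_p=N$, and the hypothesis reads
\begin{equation*}
\overline{\lambda_{2^n-p+i}}=\lambda_i\qquad(1\le i\le p-1).
\end{equation*}
Since $k\in\{1,\dots,2^n-1\}$ we have $2\le p\le 2^n$, and $p=2^n$ (i.e. $k=1$) is impossible, as it would force $\overline{\lambda_1\cdots\lambda_{2^n-1}}=\lambda_1\cdots\lambda_{2^n-1}$ while $\overline{\lambda_1}=N-2\ne N=\lambda_1$; hence $2\le p\le 2^n-1$. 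The key device is the reflected tail $S:=\overline{\lambda_k\lambda_{k+1}\cdots}$, whose $i$-th symbol is $\overline{\lambda_{2^n-p+i}}$. Using the hypothesis, the defining relations (\ref{lambda sequence}), and crucially that $n$ is \emph{odd} (so $\lambda_{2^n}=N-1$ and $\overline{\lambda_{2^n}}=N-1$), I would record the initial symbols of $S$ as
\begin{equation*}
S_i=\lambda_i\ (1\le i\le p-1),\qquad S_p=\overline{\lambda_{2^n}}=N-1,\qquad S_{p+j}=\overline{\lambda_{2^n+j}}=\lambda_j\ (1\le j<2^n).
\end{equation*}
Writing $L:=\lambda_1\lambda_2\cdots$, the second inequality of Proposition \ref{prop:small admissible}, applied at index $2^n-p\ge1$, gives $S<L$.

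Since $\lambda_p\in\{N-2,N-1,N\}$, it then suffices to exclude the two smaller values. The value $\lambda_p=N-2$ is ruled out immediately: $S$ and $L$ agree on positions $1,\dots,p-1$, whereas $S_p=N-1>N-2=L_p$, forcing $S>L$ and contradicting $S<L$.

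The crux, and the step I expect to be the main obstacle, is excluding $\lambda_p=N-1$, in which case $S$ and $L$ agree through position $p$. The delicate point is that $S$ agrees with the shifted sequence $L$ only over a \emph{bounded} window: one has $S_{p+j}=\lambda_j$ exactly for $1\le j<2^n$, and this coincidence breaks at $j=2^n$ because $\lambda_{2^{n+1}}=N\ne N-1=\overline{\lambda_{2^n}}$. So I would let $P>p$ be the first position witnessing $S<L$ (thus $S_P<L_P$ with agreement on $p+1,\dots,P-1$) and split on its location. If $P\le p+2^n-1$, then with $j_0:=P-p\in\{1,\dots,2^n-1\}$ the identity $S_{p+j}=\lambda_j$ yields $\lambda_j=\lambda_{p+j}$ for $j<j_0$ and $\lambda_{j_0}=S_P<L_P=\lambda_{p+j_0}$, i.e. $\lambda_{p+1}\lambda_{p+2}\cdots>\lambda_1\lambda_2\cdots$, contradicting the first inequality of Proposition \ref{prop:small admissible}. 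If instead $P>p+2^n-1$, then $S$ and $L$ agree on the entire window, i.e. $\lambda_{p+1}\cdots\lambda_{p+2^n-1}=\lambda_1\cdots\lambda_{2^n-1}$, which contradicts the strict inequality of Lemma \ref{Lemma:6} (valid since $p+1\in\{2,\dots,2^n-1\}$). This eliminates $\lambda_p=N-1$ whenever $p\le 2^n-2$; for the remaining value $p=2^n-1$ I would instead observe directly, by iterating $\lambda_{2^m-1}=\overline{\lambda_{2^{m-1}-1}}$ down to $\lambda_1=N$, that $\lambda_{2^n-1}=N$ for odd $n$. Combining all cases gives $\lambda_p=N$, which is exactly $\lambda_{2^n-k+1}=N$.
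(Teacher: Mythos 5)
Your proof is correct, but it runs in the opposite direction from the paper's. The paper \emph{folds downward}: it splits on $k$ versus $2^{n-1}$, rules out $k<2^{n-1}$ by Lemma \ref{Lemma:6} (at scale $n-1$) and $k=2^{n-1}$ by comparing first symbols, and then for $k>2^{n-1}$ uses the reflection identity $\lambda_{2^{n-1}+\ell}=\overline{\lambda_\ell}$ to rewrite the hypothesis as the equality of \emph{unreflected} blocks $\lambda_{k-2^{n-1}}\dots\lambda_{2^{n-1}-1}=\lambda_1\dots\lambda_{2^n-k}$; a single application of Proposition \ref{prop:small admissible} then squeezes $N=\lambda_{2^{n-1}}\le\lambda_{2^n-k+1}\le N$, oddness of $n$ entering only through $\lambda_{2^{n-1}}=N$. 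You instead \emph{extend upward}: you keep the reflected tail $S=\overline{\lambda_k\lambda_{k+1}\cdots}$ intact, continue it past position $p=2^n-k+1$ via $\lambda_{2^n+j}=\overline{\lambda_j}$ (oddness entering through $\lambda_{2^n}=N-1$ and $\lambda_{2^n-1}=N$), and compare $S$ with $L$ using \emph{both} inequalities of Proposition \ref{prop:small admissible} together with Lemma \ref{Lemma:6} at scale $n$, plus a separate direct computation for the edge case $p=2^n-1$. The paper's route is shorter and dispatches all $k>2^{n-1}$ uniformly; yours needs the finer case analysis on the value of $\lambda_p$ and on the location of the first disagreement, but it works directly with the target symbol, avoids the downward-folding trick entirely, and correctly isolates the one value of $p$ where the block-comparison argument cannot apply --- the kind of boundary case that is easy to overlook. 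Both arguments are complete and rest on the same two ingredients (the admissibility inequalities and Lemma \ref{Lemma:6}), so this is a legitimate alternative proof rather than a variant of the paper's.
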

\begin{proof}
 Suppose
$\overline{\lambda_k\dots\lambda_{2^n-1}}=\lambda_1\dots\lambda_{2^n-k}$.
It can not happen that $ k<2^{n-1}$ since then we will obtain that
$\overline{\lambda_k\dots\lambda_{k+2^{n-1}-2}}=\lambda_1\dots\lambda_{2^{n-1}-1}$
which contradicts Lemma \ref{Lemma:6}. It is also impossible that
 $k=2^{n-1}$ since then
$N-2=\overline{\lambda_{2^{n-1}}}=\lambda_1=N$. Thus we must have
$k>2^{n-1}$. From the definition of $(\lambda_\ell)_{\ell=0}^\N$ in
(\ref{lambda sequence}) it follows that
 \begin{equation*}
\lambda_{k-2^{n-1}}\dots\lambda_{2^{n-1}-1}=\overline{\lambda_k\dots\lambda_{2^n-1}}=\lambda_1\dots\lambda_{2^n-k},
\end{equation*}
which implies $N\ge\lambda_{2^n-k+1}\ge \lambda_{2^{n-1}}=N$ by
Proposition \ref{prop:small admissible}.
\end{proof}

We want to approximate $(\lambda_\ell)_{\ell=1}^\N$ by eventually
periodic sequences which satisfy (\ref{eq:quasi-greedy expansion}).
This does not work for the obvious choice
$(\lambda_1\dots\lambda_{2^n})^\N$. Thus we define for $n\ge 0$
\begin{equation*}
 C_n^\N=\lambda_1\dots\lambda_{2^n}(\lambda_{2^n+1}\dots\lambda_{2^{n+1}})^\N.
\end{equation*}
Since for all $n\ge 0$ we have
 $\lambda_{2^{n+1}}>\overline{\lambda_{2^n}}$, we obtain
 that
\begin{equation*}
\lambda_1\dots\lambda_{2^n}(\lambda_{2^n+1}\dots\lambda_{2^{n+1}})^3
 >\lambda_1\dots\lambda_{2^{n+1}}\lambda_{2^{n+1}+1}\dots\lambda_{2^{n+2}},
\end{equation*}
 which implies

 \textbf{(P7)}
$C_0^\N>C_1^\N>\dots>C_{n}^\N>\dots>(\lambda_\ell)_{\ell=1}^\N$ in
the lexicographical order.

\begin{lemma}\label{lemma:7'}
  Let $n\ge 3$ be an odd number. Then for any $k\ge 1$ we have
  $\sigma^k(C_{n}^\N)<C_{n}^\N$, where $\sigma$ is the left-shift
  map.
\end{lemma}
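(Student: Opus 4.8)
The claim is that for odd $n\ge 3$, the infinite string $C_n^\N=\lambda_1\dots\lambda_{2^n}(\lambda_{2^n+1}\dots\lambda_{2^{n+1}})^\N$ is strictly larger, lexicographically, than every one of its left-shifts $\sigma^k(C_n^\N)$. This is precisely the condition (the first half of \eqref{eq:quasi-greedy expansion}) needed for $C_n^\N$ to be the quasi-greedy expansion of some number, which is presumably how these eventually-periodic sequences will be used to approximate $(\lambda_\ell)_{\ell=1}^\N$ from above while still satisfying the admissibility constraints. The string $C_n^\N$ is eventually periodic with pre-period $\lambda_1\dots\lambda_{2^n}$ and period block $P:=\lambda_{2^n+1}\dots\lambda_{2^{n+1}}$ of length $2^n$.

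\medskip

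\noindent\textbf{The plan.} I would split the shifts $\sigma^k$ into three regimes according to how $k$ relates to the pre-period length $2^n$ and the period $2^n$. For shifts with $1\le k\le 2^n-1$ the comparison begins inside the pre-period $\lambda_1\dots\lambda_{2^n}$, and I expect to reduce the comparison to a statement about the $\lambda$-sequence itself and invoke Lemma \ref{Lemma:6} (with $n+1$ in place of $n$, say) together with Proposition \ref{prop:small admissible}. The key structural fact I would exploit is \eqref{lambda sequence}: since $n$ is odd, $\lambda_{2^n}=N-1$ and $\lambda_{2^{n+1}}=2N-1-\lambda_{2^n}=N$, so the period block $P$ begins right after position $2^n$ and the reflection relations in \eqref{lambda sequence} let me rewrite the tail of $C_n^\N$ in terms of $\lambda_1\lambda_2\dots$. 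For shifts that land at or beyond the start of the periodic part, periodicity lets me reduce $\sigma^k$ to $\sigma^{k'}$ with $0\le k'<2^n$, so only finitely many "phases" within one period need to be treated.

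\medskip

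\noindent\textbf{Key steps in order.} First I would record the boundary digit values forced by $n$ odd, namely $\lambda_{2^n}=N-1$ and $\lambda_{2^{n+1}}=N$, and note that $P=\lambda_{2^n+1}\dots\lambda_{2^{n+1}}=\overline{\lambda_1\dots\lambda_{2^n-1}}\,N$ via the reflection rule $\lambda_{2^n+\ell}=\overline{\lambda_\ell}$ for $1\le\ell<2^n$. Second, for a shift starting at position $k$ with $2\le k\le 2^n$, I would compare $\sigma^k(C_n^\N)$ with $C_n^\N$ by comparing initial blocks: by Proposition \ref{prop:small admissible} we have $\lambda_k\lambda_{k+1}\dots<\lambda_1\lambda_2\dots$, and I would upgrade this to a \emph{strict} inequality at a finite prefix using Lemma \ref{Lemma:6}, being careful that $C_n^\N$ and $(\lambda_\ell)$ agree only up to position $2^{n+1}$. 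The delicate point is the possibility of equality on a long prefix: if $\sigma^k(C_n^\N)$ agreed with $C_n^\N$ for a very long stretch, I would trace the agreement into the periodic tail and derive a contradiction with (P7), i.e.\ with $C_n^\N>(\lambda_\ell)_{\ell=1}^\N$, or with the reflection structure; this is where Lemma \ref{lemma:7}, giving $\lambda_{2^n-k+1}=N$ whenever a reflected block matches a prefix, should be the decisive tool for ruling out the equality case. Third, for $k\ge 2^n$ I would use periodicity to replace $\sigma^k$ by a shift within the first period and appeal to the already-treated cases together with (P7).

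\medskip

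\noindent\textbf{Main obstacle.} The hard part will be handling the near-equality shifts: the inequalities coming from Proposition \ref{prop:small admissible} are strict for the genuine $\lambda$-sequence, but $C_n^\N$ differs from $(\lambda_\ell)$ exactly at the seam $\lambda_{2^{n+1}+1}$ where the period repeats, so I must ensure no shift of $C_n^\N$ exploits that seam to catch up to or overtake $C_n^\N$. Controlling precisely where the first disagreement between $\sigma^k(C_n^\N)$ and $C_n^\N$ occurs, and showing the disagreeing digit always goes the right way, is the crux; I expect to push this through by a careful case analysis on the residue of $k$ relative to $2^n$, using that $n$ is odd to guarantee $\lambda_{2^{n+1}}=N$ is the maximal admissible leading digit, so that after any reflection the competing tail cannot exceed $\lambda_1=N$. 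Lemmas \ref{Lemma:6} and \ref{lemma:7} are tailored to exactly these reflected-block comparisons, so the proof should assemble from them without new heavy machinery.
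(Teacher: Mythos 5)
Your plan is essentially the paper's proof: reduce by periodicity to finitely many shifts, dispose of shifts starting inside the prefix $\lambda_1\dots\lambda_{2^n}$ by Lemma \ref{Lemma:6}, and for shifts starting inside the periodic block exploit its reflected form $\overline{\lambda_1\dots\lambda_{2^n-1}}\,\lambda_{2^{n+1}}$, settling the borderline-equality case by Lemma \ref{lemma:7} combined with $\lambda_{2^{n+1}}=N$ (oddness of $n$) and one further application of Lemma \ref{Lemma:6} --- exactly the paper's Case I/Case II split. The one slip is organizational: for prefix shifts Lemma \ref{Lemma:6} already yields strict inequality within the first $2^n-1$ digits, so the delicate equality case you place there never arises; it arises instead for the periodic-part shifts, where your third step cannot literally ``appeal to the already-treated cases together with (P7)'' but must carry out the reflected-block and Lemma \ref{lemma:7} argument you describe elsewhere in your sketch, as the paper does in its Case II.
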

\begin{proof}
Since $C_n^\N$ is an eventually periodic sequence in $\OTN^\N$, we
only have to check the lemma for $k\in\{1,\dots,2^{n+1}-1\}$. For
$k=2^n-1$ or $2^{n+1}-1$, it is easy to check that
$\sigma^{k}(C_n^\N)<C_n^\N$. Then we only need to consider the
following two cases.

 (I) $k\in\{1,\dots,2^n-2\}$.
It follows from Lemma \ref{Lemma:6} that
\begin{equation*}
\sigma^k(C_n^\N)=\lambda_{k+1}\dots\lambda_{2^n+k-1}\dots<\lambda_1\dots\lambda_{2^n-1}\lambda_{2^n}(\lambda_{2^n+1}\dots\lambda_{2^{n+1}})^\N=C_n^\N.
\end{equation*}

(II) $k\in\{2^n,\dots,2^{n+1}-2\}$. Write $k=2^n+\ell$. Then, by the
definition of $(\lambda_\ell)_{\ell=1}^\N$,
\begin{eqnarray*}
\sigma^k(C_n^\N)&=&\lambda_{k+1}\dots\lambda_{2^{n+1}-1}\lambda_{2^{n+1}}(\lambda_{2^n+1}\dots\lambda_{2^{n+1}})^\N\\
&=&\overline{\lambda_{\ell+1}\dots\lambda_{2^n-1}}\lambda_{2^{n+1}}(\lambda_{2^n+1}\dots\lambda_{2^{n+1}})^\N.
\end{eqnarray*}
If
$\overline{\lambda_{\ell+1}\dots\lambda_{2^n-1}}<\lambda_1\dots\lambda_{2^n-\ell-1}$,
we have shown that $\sigma^k(C_n^\N)<C_n^\N$. Otherwise, $\ell\ge 2$
and we have by Proposition \ref{prop:small admissible} that
$\overline{\lambda_{\ell+1}\dots\lambda_{2^n-1}}=\lambda_1\dots\lambda_{2^n-\ell-1}$.
Using Lemma \ref{lemma:7} we obtain that also
$\lambda_{2^{n+1}}=N=\lambda_{2^n-\ell}$. Thus it is enough to show
\begin{equation*}
\lambda_{2^n+1}\dots\lambda_{2^{n+1}-1}<\lambda_{2^n-\ell+1}\dots\lambda_{2^{n+1}-\ell-1}.
\end{equation*}
 Taking reflections on both sides, this is equivalent to
showing
$\lambda_1\dots\lambda_{2^n-1}>\overline{\lambda_{2^n-\ell+1}\dots\lambda_{2^{n+1}-\ell-1}}$,
which is true by Lemma \ref{Lemma:6} since $\ell\ge 2$.
\end{proof}

\begin{lemma}\label{lemma:8}
Let $n\ge 3$ be an odd integer and $
\xi_n=(N-1)\lambda_1\dots\lambda_{2^n-1},
\eta_n=(N-2)\lambda_1\dots\lambda_{2^n-1}. $ Then for any $k\in
\{0,\dots,2^n-1\}$
\begin{eqnarray*}
    &&\sigma^k(\xi_n\eta_n)<\lambda_1\dots\lambda_{2^{n+1}-k},\quad
    \sigma^k(\overline{\xi_n\eta_n})<\lambda_1\dots\lambda_{2^{n+1}-k},\quad\sigma^k(\eta_n\overline{\xi_n})\le\lambda_1\dots\lambda_{2^{n+1}-k},\\
   &&\sigma^k(\overline{\eta_n}\xi_n)<\lambda_1\dots\lambda_{2^{n+1}-k},\quad
   \sigma^k(\xi_n\overline{\xi_n})\le\lambda_1\dots\lambda_{2^{n+1}-k},\quad
\sigma^k(\overline{\xi_n}\xi_n)<\lambda_1\dots\lambda_{2^{n+1}-k}.
\end{eqnarray*}
\end{lemma}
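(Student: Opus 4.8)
The plan is to make the six words totally explicit and then reduce every comparison to the shift inequalities already on hand. Set $B:=\lambda_1\cdots\lambda_{2^n-1}$; since $n$ is odd we have $\lambda_{2^n}=N-1$ and $\lambda_{2^{n+1}}=N$, so (\ref{lambda sequence}) gives $\xi_n=(N-1)B$, $\eta_n=(N-2)B$, $\overline{\xi_n}=(N-1)\overline B$, $\overline{\eta_n}=N\overline B$, and the target block $\lambda_1\cdots\lambda_{2^{n+1}}=B\,(N-1)\,\overline B\,N$, where $\overline d=2N-2-d$ and every $\lambda_\ell\in\{N-2,N-1,N\}$. Each of the six words then has the shape $c_1X_1c_2X_2$ with $X_1,X_2\in\{B,\overline B\}$ and a single middle digit $c_2$. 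For $k=0$ the statement is immediate from the first one or two digits: five words begin with a digit $<\lambda_1=N$, and $\overline{\eta_n}\xi_n=N\overline B(N-1)B$ loses at the second digit because $\overline{\lambda_1}=N-2<N-1=\lambda_2$.

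For $1\le k\le 2^n-1$ the shift falls inside the first block, so $\sigma^k(c_1X_1c_2X_2)$ opens with the length-$(2^n-k)$ suffix of $X_1$, namely $\lambda_k\cdots\lambda_{2^n-1}$ if $X_1=B$ and its reflection if $X_1=\overline B$, followed by $c_2X_2$. The device I would use is to compare this not with $\lambda_1\lambda_2\cdots$ but with the genuine shift $\lambda_k\lambda_{k+1}\cdots$ (or, when $X_1=\overline B$, its reflection $\overline{\lambda_k\lambda_{k+1}\cdots}$), which carries exactly the same opening suffix and has $\lambda_{2^n}=N-1$ (resp. $\overline{\lambda_{2^n}}=N-1$) in the middle slot. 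Proposition \ref{prop:small admissible} gives, on length-$(2^{n+1}-k)$ prefixes, $\lambda_k\cdots\lambda_{2^{n+1}-1}\le\lambda_1\cdots\lambda_{2^{n+1}-k}$ and its reflected analogue, so it is enough to compare $\sigma^k(W)$ with the corresponding shift and then chain the inequality.

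When $X_1=B$ (the words $\xi_n\eta_n$, $\eta_n\overline{\xi_n}$, $\xi_n\overline{\xi_n}$) the opening suffixes agree verbatim and everything is decided at the middle digit: $\xi_n\eta_n$ has $c_2=N-2<N-1$, giving a strict inequality, while $\eta_n\overline{\xi_n}$ and $\xi_n\overline{\xi_n}$ have $c_2=N-1$ and $X_2=\overline B=\lambda_{2^n+1}\cdots\lambda_{2^{n+1}-1}$, so $\sigma^k(W)=\lambda_k\cdots\lambda_{2^{n+1}-1}$ and only the claimed weak inequality $\le$ survives. When $X_1=\overline B$ with $c_2=N-1$ (the words $\overline{\eta_n}\xi_n$ and $\overline{\xi_n}\xi_n$, both with $X_2=B$) the same bookkeeping yields $\sigma^k(W)=\overline{\lambda_k\cdots\lambda_{2^{n+1}-1}}$; here I would upgrade the resulting $\le$ to a strict inequality using the reflected inequality of Lemma \ref{Lemma:6} with $m=n$, which makes $\overline{\lambda_k\cdots\lambda_{k+2^n-2}}<\lambda_1\cdots\lambda_{2^n-1}$ strict already on the first $2^n-1\le 2^{n+1}-k$ digits.

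The delicate case is $\overline{\xi_n\eta_n}=(N-1)\overline B\,N\,\overline B$, whose middle digit $c_2=N$ is maximal, so it cannot be dominated by the reflected shift. Here I would compare $\sigma^k(\overline{\xi_n\eta_n})$ with $\lambda_1\cdots\lambda_{2^{n+1}-k}$ head-on: the reflected inequality of Lemma \ref{Lemma:6} makes the opening suffix strictly smaller unless its $2^n-k$ digits coincide with $\lambda_1\cdots\lambda_{2^n-k}$, and in that tie Lemma \ref{lemma:7} forces $\lambda_{2^n-k+1}=N=c_2$, so the comparison advances into the final block, becoming $\overline B$ against $\lambda_{2^n-k+2}\cdots\lambda_{2^{n+1}-k}$; reflecting and using Proposition \ref{prop:small admissible} gives $\le$, and a last application of the reflected inequality of Lemma \ref{Lemma:6} (legitimate because the tie can occur only for $k\ge 3$) excludes equality and produces the strict bound. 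I expect this tie analysis for $\overline{\xi_n\eta_n}$—the interplay of Lemma \ref{lemma:7} with the restricted digit range $\{N-2,N-1,N\}$—to be the main obstacle; the other five words are routine once the explicit forms and the shift-comparison device are set up.
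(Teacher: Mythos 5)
Your proposal is correct and follows essentially the same route as the paper: the same explicit identification of the six shifted words (in particular $\sigma^k(\eta_n\overline{\xi_n})=\sigma^k(\xi_n\overline{\xi_n})=\lambda_k\dots\lambda_{2^{n+1}-1}$ and $\sigma^k(\overline{\eta_n}\xi_n)=\sigma^k(\overline{\xi_n}\xi_n)=\overline{\lambda_k\dots\lambda_{2^{n+1}-1}}$), the same middle-digit comparison for $\xi_n\eta_n$, and the same tie analysis for $\overline{\xi_n\eta_n}$ via Lemma \ref{lemma:7} followed by the reflected inequality of Lemma \ref{Lemma:6} (valid since the tie forces $k\ge 3$), with Proposition \ref{prop:small admissible} supplying the chaining step. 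The only cosmetic difference is that you treat $k=0$ separately and absorb $k=1,2$ into the general analysis, while the paper checks $k=0,1$ directly.
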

\begin{proof}
  Since the lemma is quickly checked for $k=0$ and $1$, we can assume
  $k\in\{2,\dots,2^n-1\}$. It follows by $\lambda_{2^n}=N-1$ (since $n$ is odd) that
 \begin{equation*}
\sigma^k(\xi_n\eta_n)=\lambda_k\dots\lambda_{2^n-1}(N-2)\lambda_1\dots\lambda_{2^n-1}
  <\lambda_k\dots\lambda_{2^n-1}\lambda_{2^n}\dots\lambda_{2^{n+1}-1}\le\lambda_1\dots\lambda_{2^{n+1}-k}.
\end{equation*}
  For the second inequality, note that
  $\sigma^k(\overline{\xi_n\eta_n})=\overline{\lambda_k\dots\lambda_{2^n-1}}N
  \overline{\lambda_1\dots\lambda_{2^n-1}}.$
   If~
  $\overline{\lambda_k\dots\lambda_{2^n-1}}<\lambda_1\dots\lambda_{2^n-k}$,
  we have shown $\sigma^k(\overline{\xi_n\eta_n})<\lambda_1\dots\lambda_{2^{n+1}-k}$. Otherwise, it follows by Proposition \ref{prop:small admissible}
  that $\overline{\lambda_k\dots\lambda_{2^n-1}}=\lambda_1\dots\lambda_{2^n-k}$ which implies $k>2$.
  Thus we obtain by
  Lemma \ref{lemma:7} that $\lambda_{2^n-k+1}=N$. Hence we only have to show
$\overline{\lambda_1\dots\lambda_{2^n-1}}<\lambda_{2^n-k+2}\dots\lambda_{2^{n+1}-k}$
which is equivalent to showing
$\lambda_1\dots\lambda_{2^n-1}>\overline{\lambda_{2^n-k+2}\dots\lambda_{2^{n+1}-k}}$.
This is true by Lemma \ref{Lemma:6} since $k>2$. Therefore,
$\sigma^k(\overline{\xi_n\eta_n})<\lambda_1\dots\lambda_{2^{n+1}-k}$
for $k\in\{2,\dots,2^n-1\}$. The remaining four inequalities follow
from Lemma \ref{Lemma:6} and the fact that for
$k\in\{2,\dots,2^n-1\}$
\begin{eqnarray*}
&&\sigma^k(\eta_n\overline{\xi_n})=\sigma^k(\xi_n\overline{\xi_n})=\lambda_k\dots\lambda_{2^n-1}\overline{(N-1)\lambda_{1}\dots\lambda_{2^n-1}}
=\lambda_k\dots\lambda_{2^{n+1}-1},\\
&&\sigma^k(\overline{\eta_n}\xi_n)=\sigma^k(\overline{\xi_n}\xi_n)=\overline{\lambda_k\dots\lambda_{2^n-1}}(N-1)\lambda_{1}\dots\lambda_{2^n-1}
=\overline{\lambda_k\dots\lambda_{2^{n+1}-1}}.
\end{eqnarray*}
\end{proof}
From Lemma \ref{lemma:7'} and Proposition \ref{prop: quasi greedy
expansion} it follows that $C_n^\N$ is the quasi-greedy expansion of
$1$ for some base $\beta_n$, i.e.,
$(\delta_\ell(\beta_n))_{\ell=1}^\N=C_n^\N$. Then we obtain from
(P7) and Proposition \ref{prop: strictly decreasing of quasi
expansion} that $\beta_n$ increases to $\beta_c$ as
$n\rightarrow\N$. Thus for $\beta<\beta_c$ there exists a large odd
number $n\ge 3$
 such that $\beta<\beta_n<\beta_c$, which together with Proposition
\ref{prop: strictly decreasing of quasi expansion} imply that
\begin{equation*}
(\delta_\ell(\beta))_{\ell=1}^\N>(\delta_\ell(\beta_n))_{\ell=1}^\N=C_n^\N=\lambda_1\dots\lambda_{2^n}(\lambda_{2^n+1}\dots\lambda_{2^{n+1}})^\N.
\end{equation*}
 It follows from
Lemma \ref{lemma:8} and Theorem \ref{th:tunique_shift} that
\begin{equation*}
X_A^{(n)}\subseteq\ps^{-1}(\mathcal{U}_{\beta,2N-1}),
\end{equation*} where $X_A^{(n)}$ is a subshift of finite type $
X^{(n)}_A:=\big\{(e_\ell)_{\ell=1}^\N\in\mathfrak{A}^\N:~
A({e_\ell,e_{\ell+1}})=1\big\} $ over the alphabet
$\mathfrak{A}=\{\xi_n,\eta_n,\overline{\xi_n},\overline{\eta_n}\}$
 defined by the matrix
\begin{equation*}
A=\left(\begin{array}{cccc} 0&1&1&0\\
0&0&1&0\\
1&0&0&1\\
1&0&0&0
\end{array}\right).
\end{equation*}
It is easy to obtain that $r(A)$, the spectral radius of $A$, equals
$\frac{1+\sqrt{5}}{2}$. Since $\ps(X_A^{(n)})$ is a graph-directed
set satisfying the OSC for large $n$, we conclude from \cite{Mauldin
Williams} that
\begin{equation*}
\dim_H\mathcal{U}_{\beta,2N-1}\ge\dim_H\ps(X_A^{(n)})=\frac{\log
r(A)}{-2^n\log\beta}=
\frac{\log\frac{1+\sqrt{5}}{2}}{-2^n\log\beta}>0,
\end{equation*}
which establishes Part (1) of Theorem \ref{th:critical point}.

In the following we will show Part (2) and (3) simultaneously. Let
\begin{equation*}
w_n:=\lambda_1\dots\lambda_{2^n}.
\end{equation*}
Then by the definition of $(\lambda_\ell)_{\ell=1}^\N$ in
(\ref{lambda sequence}) it is easy to check that
$w_n\overline{w_n}<w_{n+1}$, which implies

\textbf{(P8)} $
(w_0\overline{w_0})^\N<(w_1\overline{w_1})^\N<\dots<(w_n\overline{w_n})^\N<\dots<(\lambda_\ell)_{\ell=1}^\N
$ in the lexicographical order.

\begin{lemma}\label{Lemma:next 2^n terms}
   Given $N\ge2, ~\beta\ge\beta_{c}$ and $(\ep_\ell)_{\ell=1}^\N\in\ps^{-1}(\UTN)$, if $\ep_k<2N-2$ and
   $\ep_{k+1}\cdots\ep_{k+2^n}=w_n$ for some $k, n\ge 0$,
    then $\ep_{k+1}\dots\ep_{k+2^{n+1}}=w_n\overline{w_n}$ or
$w_{n+1}$.
    Similarly, if $\ep_k>0$ and  $\ep_{k+1}\cdots\ep_{k+2^n}=\overline{w_n}$ for some $k, n\ge 0$,
     then $\ep_{k+1}\dots\ep_{k+2^{n+1}}=\overline{w_n}w_n$ or
$\overline{w_{n+1}}$.
\end{lemma}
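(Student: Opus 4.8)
The plan is to pin down the second length-$2^n$ block $\mu:=\ep_{k+2^n+1}\cdots\ep_{k+2^{n+1}}$ by trapping it lexicographically between two candidate blocks that turn out to be lexicographic neighbours. From the recursion (\ref{lambda sequence}) one reads off $\lambda_{2^n+\ell}=\overline{\lambda_\ell}$ for $1\le\ell<2^n$ and $\lambda_{2^{n+1}}=\overline{\lambda_{2^n}}+1$, so that
\begin{equation*}
\overline{w_n}=\overline{\lambda_1}\cdots\overline{\lambda_{2^n-1}}\,\overline{\lambda_{2^n}},\qquad \lambda_{2^n+1}\cdots\lambda_{2^{n+1}}=\overline{\lambda_1}\cdots\overline{\lambda_{2^n-1}}\,\bigl(\overline{\lambda_{2^n}}+1\bigr)
\end{equation*}
share their first $2^n-1$ symbols and differ by exactly one in the last. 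Hence it suffices to establish the two block inequalities $\overline{w_n}\le\mu\le\lambda_{2^n+1}\cdots\lambda_{2^{n+1}}$; since the bounding blocks are lexicographic neighbours, this sandwich forces $\mu$ to equal one of them, giving $\ep_{k+1}\cdots\ep_{k+2^{n+1}}=w_n\overline{w_n}$ or $w_{n+1}$ respectively.

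Both bounds I would extract from Theorem \ref{th:tunique_shift}, using that $\beta\ge\beta_c$ makes every inequality in (\ref{tunique_shift}) readable against the fixed sequence $(\lambda_\ell)$: indeed Proposition \ref{prop: strictly decreasing of quasi expansion} together with $(\delta_\ell(\beta_c))=(\lambda_\ell)$ gives $(\delta_\ell(\beta))\le(\lambda_\ell)$, so the right-hand sides $\delta_1\delta_2\cdots$ may be replaced by $\lambda_1\lambda_2\cdots$. For the upper bound I apply the first (greedy) alternative of (\ref{tunique_shift}) at index $k$: from $\ep_k<2N-2$ we get $\ep_{k+1}\ep_{k+2}\cdots<\lambda_1\lambda_2\cdots$, and since the first $2^n$ symbols already agree with $w_n=\lambda_1\cdots\lambda_{2^n}$, the first disagreement lies beyond position $2^n$, whence $\ep_{k+2^n+1}\ep_{k+2^n+2}\cdots<\lambda_{2^n+1}\lambda_{2^n+2}\cdots$ and therefore $\mu\le\lambda_{2^n+1}\cdots\lambda_{2^{n+1}}$. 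For the lower bound I apply the second (reflected) alternative at the interior index $k+2^n$: here $\ep_{k+2^n}=\lambda_{2^n}\in\{N-1,N\}$ is positive, so $\overline{\ep_{k+2^n+1}\ep_{k+2^n+2}\cdots}<\lambda_1\lambda_2\cdots$, which yields $\overline{\mu}\le\lambda_1\cdots\lambda_{2^n}=w_n$, i.e.\ $\mu\ge\overline{w_n}$ (reflection reverses the lexicographic order). This is exactly the desired sandwich.

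The second assertion is the mirror image, obtained by interchanging the two alternatives. Writing $\nu:=\ep_{k+2^n+1}\cdots\ep_{k+2^{n+1}}$, the hypothesis $\ep_{k+1}\cdots\ep_{k+2^n}=\overline{w_n}$ gives $\ep_{k+2^n}=\overline{\lambda_{2^n}}\le 2N-3<2N-2$, so the greedy alternative at index $k+2^n$ forces $\nu\le w_n$; on the other hand $\ep_k>0$ and $\overline{\ep_{k+1}\cdots\ep_{k+2^n}}=w_n$ let the reflected alternative at index $k$ give $\overline{\nu}\le\lambda_{2^n+1}\cdots\lambda_{2^{n+1}}$, that is $\nu\ge\overline{\lambda_{2^n+1}\cdots\lambda_{2^{n+1}}}=\lambda_1\cdots\lambda_{2^n-1}(\lambda_{2^n}-1)$ after reflecting. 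The upper block $w_n$ and this lower block again differ only in their last symbol, so the sandwich pins $\nu$ to $w_n$ or to $\overline{\lambda_{2^n+1}\cdots\lambda_{2^{n+1}}}$, producing $\overline{w_n}w_n$ or $\overline{w_{n+1}}$ for $\ep_{k+1}\cdots\ep_{k+2^{n+1}}$.

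The steps are all of the same elementary nature; the only points I would take care over are the two routine passages from a strict inequality of infinite sequences to the corresponding (non-strict) block inequality, and the bookkeeping of deciding, in each of the four applications, which alternative of (\ref{tunique_shift}) is in force and at which index. I expect the genuine content — the single fact that makes the argument close — to be the structural identity drawn from (\ref{lambda sequence}): the two candidate continuations of $w_n$ (respectively $\overline{w_n}$) are lexicographic neighbours, so that one greedy bound and one reflected bound together already determine the entire next block.
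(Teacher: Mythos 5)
Your proposal is correct and follows essentially the same route as the paper's proof: the paper also obtains the upper bound from the first alternative of (\ref{tunique_shift}) at index $k$ (after replacing $\delta_1\delta_2\dots$ by $\lambda_1\lambda_2\dots$ via Proposition \ref{prop: strictly decreasing of quasi expansion}), the lower bound from the reflected alternative at index $k+2^n$ using $\ep_{k+2^n}=\lambda_{2^n}>0$, and then closes with the same structural identity $\overline{\lambda_1\dots\lambda_{2^n}}=\lambda_{2^n+1}\dots\lambda_{2^{n+1}-1}(\lambda_{2^{n+1}}-1)$ from (\ref{lambda sequence}), which makes the two bounding blocks lexicographic neighbours. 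The only difference is cosmetic (you shift first and truncate second, the paper truncates first), and you spell out the mirror case that the paper dismisses with ``follows similarly.''
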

\begin{proof}
 Let $(\delta_\ell)_{\ell=1}^\N:=(\delta_\ell(\beta))_{\ell=1}^\N$. It follows from $\beta\ge\beta_c$ and
 Proposition \ref{prop: strictly decreasing of quasi expansion} that
\begin{equation*}
(\delta_\ell)_{\ell=1}^\N\le
(\delta_\ell(\beta_c))_{\ell=1}^\N=(\lambda_\ell)_{\ell=1}^\N .
\end{equation*}
Using (\ref{tunique_shift}) and the assumption $\ep_k<2N-2$, we
obtain that $
\ep_{k+1}\dots\ep_{k+2^{n+1}}\le\delta_1\dots\delta_{2^{n+1}}\le\lambda_1\dots\lambda_{2^{n+1}}.
$ Note that
$\ep_{k+1}\cdots\ep_{k+2^n}=w_n=\lambda_1\dots\lambda_{2^n}$, then
$\ep_{k+2^n+1}\dots
\ep_{k+2^{n+1}}\le\lambda_{2^n+1}\cdots\lambda_{2^{n+1}}.$
  On the other hand, from (\ref{tunique_shift}) and the fact
  $\ep_{k+2^n}=\lambda_{2^n}>0$ it follows that
   $\overline{\ep_{k+2^n+1}\dots\ep_{k+2^{n+1}}}\le\delta_1\dots\delta_{2^n}\le\lambda_1\dots\lambda_{2^n}.$
Thus by the definition of $(\lambda_\ell)_{\ell=1}^\N$ in
(\ref{lambda sequence}), we obtain
\begin{equation*} \lambda_{2^n+1}\cdots\lambda_{2^{n+1}-1}\lambda_{2^{n+1}}\ge\ep_{k+2^n+1}\dots\ep_{k+2^{n+1}}\ge\overline{\lambda_{1}\dots\lambda_{2^{n}}}=\lambda_{2^n+1}\dots\lambda_{2^{n+1}-1}(\lambda_{2^{n+1}}-1),
\end{equation*}
 which implies $\ep_{k+1}\cdots\ep_{k+2^{n+1}}=w_n\overline{w_n}$ or
$w_{n+1}$.

  The result for $\ep_k>0$ and $\ep_{k+1}\dots\ep_{k+2^n}=\overline{\lambda_1\dots\lambda_{2^n}}$ follows similarly.
\end{proof}

\begin{lemma}\label{lemma:eventually periodic}
  Let $N\ge 2$ and $\beta\in(\beta_c,1/N)$. Then there exists some integer $n^*=n^*(\beta)\ge 0$
  such that $\ps^{-1}(\UTN\setminus\{0,2\})$ contains only eventually periodic sequences,
   either with period $1$ and period block $N-1$ or with period $2^{n+1}$ and period block $w_n\overline{w_n}$ for some $n\le n^*$.
\end{lemma}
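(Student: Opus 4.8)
The plan is to prove the equivalent statement for $\UTN$, using Theorem~\ref{th:tunique_shift}: a sequence $(\ep_\ell)_{\ell=1}^\N\in\OTN^\N$ lies in $\ps^{-1}(\UTN)$ exactly when it satisfies both inequalities of (\ref{tunique_shift}) against the quasi-greedy expansion $(\delta_\ell)_{\ell=1}^\N=(\delta_\ell(\beta))_{\ell=1}^\N$. Since $\beta>\beta_c$, Proposition~\ref{prop: strictly decreasing of quasi expansion} gives $(\delta_\ell)_{\ell=1}^\N<(\lambda_\ell)_{\ell=1}^\N$; let $p$ be the least index with $\delta_p<\lambda_p$. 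By (P8) the sequences $(w_n\overline{w_n})^\N$ increase strictly to $(\lambda_\ell)_{\ell=1}^\N$, so all but finitely many exceed $\delta$; I let $n^*$ be the largest $n$ with $(w_n\overline{w_n})^\N<(\delta_\ell)_{\ell=1}^\N$ (and set $n^*=0$, with no such block occurring, if even $n=0$ fails). A first reduction I would make is that, after a finite transient, every admissible $\ep$ has all its digits in $\{N-2,N-1,N\}$: because $\sigma^k(\ep)<\delta\le\lambda$ whenever $\ep_k<2N-2$ forces $\ep_{k+1}\le\lambda_1=N$, a digit exceeding $N$ can only be preceded by $2N-2$, and (for $N\ge 3$) $2N-2$ can only be preceded by $2N-2$, so digits $>N$ survive only in an initial block of $2N-2$'s; the reflected inequality treats digits $<N-2$ symmetrically, leaving the alphabet of $(\lambda_\ell)_{\ell=1}^\N$ itself.

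Two facts then drive the argument. First (the growth bound), every occurrence of the digit $N$ following a digit $<2N-2$ equals $w_0$, and by iterating Lemma~\ref{Lemma:next 2^n terms} its continuation runs through $w_0,w_1,w_2,\dots$ and must eventually close into some $w_m\overline{w_m}$: it cannot grow forever, since that would make $\sigma^k(\ep)$ equal to $(\lambda_\ell)_{\ell=1}^\N>\delta$. Moreover $m\le n^*$, for if the growth reached $w_{n^*+1}$ — whose length exceeds $p$ — then $\sigma^k(\ep)$ would begin with the prefix $\lambda_1\cdots\lambda_p$ and hence exceed $\delta$, contradicting (\ref{tunique_shift}). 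By the reflection symmetry of (\ref{tunique_shift}) the mirror statements hold for the digit $N-2$ and the blocks $\overline{w_m}$.

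Second (the rigidity), once a block $w_m\overline{w_m}$ has appeared I would apply the reflected form of Lemma~\ref{Lemma:next 2^n terms} to its second half $\overline{w_m}$ (which is preceded by $\lambda_{2^m}>0$) to see that the following symbols again begin a copy of $w_m$ or grow the configuration to a higher level; together with the forward dichotomy this shows the blocks are contiguous and that their levels cannot oscillate. One then argues that the level is eventually constant, equal to some $m\le n^*$, so that the forward dichotomy reproduces $w_m\overline{w_m}$ indefinitely and the tail has period block $w_m\overline{w_m}$. Both inequalities of (\ref{tunique_shift}) are indispensable precisely in excluding a junction between blocks of different levels: such a junction produces a shift that, after reflection, matches a long prefix of $(\lambda_\ell)_{\ell=1}^\N$ and then overshoots it, so that the shift exceeds $\delta$, and this is exactly what the lexicographic comparisons of Lemmas~\ref{Lemma:6}, \ref{lemma:7} and~\ref{lemma:8} detect. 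Finally, if after some point the digit $N$ (and, by reflection, the digit $N-2$) no longer occurs, a direct check rules out every constant tail except $(N-1)^\N$, giving period block $N-1$.

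The step I expect to be the main obstacle is the rigidity in the third paragraph: showing that the block levels cannot decrease and that no stray digit can be inserted between consecutive blocks, so that the level genuinely stabilizes and the tail is a single repeated block rather than a mixture — for instance an alternation of $w_0\overline{w_0}$ and $w_1\overline{w_1}$, which is eventually periodic but not of the claimed form and must be excluded. This requires feeding the forward and reflected growth–closure dichotomies of Lemma~\ref{Lemma:next 2^n terms} into the delicate prefix comparisons of Lemmas~\ref{Lemma:6}–\ref{lemma:8}, and separately dispatching the degenerate case $N=2$, where $2N-2=N$ and the digit bookkeeping of the first paragraph must be redone.
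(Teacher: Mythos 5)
Your route is the same as the paper's: after a finite transient reduce to the alphabet $\{N-2,N-1,N\}$, iterate Lemma \ref{Lemma:next 2^n terms} to organize the tail into blocks $w_m,\overline{w_m}$ of non\-decreasing level, cap the level by an $n^*$ read off from where $(\delta_\ell(\beta))_{\ell=1}^\N$ sits in the chain (P8), and conclude that the level stabilizes. The genuine gap is in your justification of the cap $m\le n^*$. You claim that $w_{n^*+1}$ has length exceeding $p$ (the first index with $\delta_p<\lambda_p$), so that a shift beginning with $w_{n^*+1}$ begins with $\lambda_1\cdots\lambda_p$ and hence exceeds $(\delta_\ell)_{\ell=1}^\N$. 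The inequality $2^{n^*+1}\ge p$ is false in general. Concretely, take $N=2$ and the base $\beta$ whose quasi-greedy expansion of $1$ is $(\delta_\ell)_{\ell=1}^\N=(2100)^\N$; this sequence satisfies the condition of Proposition \ref{prop: quasi greedy expansion}, and since $1^\N<(2100)^\N<(\lambda_\ell)_{\ell=1}^\N=2102\ldots$, Proposition \ref{prop: strictly decreasing of quasi expansion} places $\beta$ in $(\beta_c,1/2)$. Here $p=4$, while $(w_0\overline{w_0})^\N=(20)^\N<(2100)^\N<(2101)^\N=(w_1\overline{w_1})^\N$, so $n^*=0$ and $|w_{n^*+1}|=2<p$. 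A shift beginning with $w_1=21$ thus matches only $\lambda_1\lambda_2$ and your prefix argument yields no contradiction; the forbidden tail $(2101)^\N$ is excluded only by comparing all four digits of $w_1\overline{w_1}$ against $\delta_1\ldots\delta_4=2100$. The same defect arises for every $N$ whenever $p>2^{n^*+1}$, e.g.\ when $p=2^{n^*+1}+1$, or when $p=2^{n^*+2}$ and $\delta_p<\lambda_p-1$.

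The correct comparison --- and this is what the paper does --- is not against the finite prefix $\lambda_1\cdots\lambda_p$ but against the periodic sequence $(w_{n^*+1}\overline{w_{n^*+1}})^\N$, which satisfies $(w_{n^*+1}\overline{w_{n^*+1}})^\N\ge(\delta_\ell)_{\ell=1}^\N$ by the very choice of $n^*$. One keeps iterating Lemma \ref{Lemma:next 2^n terms} past the offending block: each application either reproduces the periodic pattern or raises the level, and one shows that for some $\ell$ either $\ep_{\ell+1}\ep_{\ell+2}\ldots\ge(w_{n^*+1}\overline{w_{n^*+1}})^\N$ with $\ep_\ell<2N-2$, or $\overline{\ep_{\ell+1}\ep_{\ell+2}\ldots}\ge(w_{n^*+1}\overline{w_{n^*+1}})^\N$ with $\ep_\ell>0$, contradicting (\ref{tunique_shift}). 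The reflected alternative is unavoidable here: growth occurring at the second half of a block produces $\overline{w_{n^*+2}}$, whose own shift is lexicographically small while its reflection dominates. Note that this same induction is exactly what settles the ``rigidity'' step (levels non\-decreasing, hence eventually constant) that you correctly identify as the main obstacle but leave as a plan; in the paper the cap and the rigidity come out of one and the same iteration. So your outline is reparable, but as written the key step fails and the stabilization step is not carried out.
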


\begin{proof}
For $\beta\in(\beta_c,1/N)$, let
$(\delta_\ell)_{\ell=1}^\N:=(\delta_{\ell}(\beta))_{\ell=1}^\N$. The
proof will be split into two cases: Case I treats
$(\delta_\ell)_{\ell=1}^\N>(w_0\overline{w_0})^\N$, and Case II
treats $(\delta_\ell)_{\ell=1}^\N\le (w_0\overline{w_0})^\N$.

Fix a
sequence $(\ep_\ell)_{\ell=1}^\N\in\ps^{-1}(\UTN)$. In terms of
Theorem \rm{\ref{th:tunique_shift}}, it is easy to see that
$\ps^{-1}(\UTN)$
 is \emph{reflection invariant}, i.e.,
 it contains $(\ep_\ell)_{\ell=1}^\N$
 if and only if it contains $(\overline{\ep_\ell})_{\ell=1}^\N=(2N-2-\ep_\ell)_{\ell=1}^\N$. Note that $\overline{N-1}=N-1$ and
 that the existence of a period block $\overline{w_n}w_n$ implies the existence of a period block $w_n\overline{w_n}$.
  So we can assume by reflection that $\ep_1\in\{0,\dots,N-1\}$. Ignoring
the trivial case $(\ep_\ell)_{\ell=1}^\N=0^\N$, let $j\ge 1$ be the
least integer such that $\ep_j>0$. By Proposition \ref{prop:
strictly decreasing of quasi expansion}, it follows from
$\beta_c<\beta<1/N$ that
\begin{equation*}
(N-1)^\N=(\delta_\ell(1/N))_{\ell=1}^\N<(\delta_\ell)_{\ell=1}^\N<
(\delta_\ell(\beta_c))_{\ell=1}^\N=(\lambda_\ell)_{\ell=1}^\N,
\end{equation*}
which together with (\ref{tunique_shift}) imply
$\ep_j\in\{1,\dots,N\}$. Moreover, we obtain from this with
(\ref{tunique_shift}) that
\begin{equation*}
\ep_{j+1}\ep_{j+2}\dots\in\prod_1^\N\{N-2,N-1,N\}.
\end{equation*}

 {\it Case I.
$(w_0\overline{w_0})^\N<(\delta_\ell)_{\ell=1}^\N<(\lambda_\ell)_{\ell=1}^\N$.}

It then follows from (P8) that there exists an integer $n^*\ge 0$
such that
$(w_{n^*}\overline{w_{n^*}})^\N<(\delta_\ell)_{\ell=1}^\N\le(w_{n^*+1}\overline{w_{n^*+1}})^\N$.

(Ia) $\ep_j\in\{1,\dots,N-1\}$. One case is that
$\ep_{j+1}\ep_{j+2}\dots=(N-1)^\N$, otherwise, let first $s\ge j$ be
the least integer such that
$\ep_{s+1}\in\{N,N-2\}=\{w_0,\overline{w_0}\}$, and then let
$p=p(s)\ge 0$ be the largest integer such that
  $\ep_{s+1}\dots\ep_{s+2^p}=w_p$ or $\overline{w_p}$. Note that when $s>j$, then $0< \ep_s=N-1<2N-2$ or when $s=j$, then
  $0<1\le\ep_s\le N-1<2N-2$. Thus substituting $k=s$
and $n=p$ in Lemma \ref{Lemma:next 2^n terms} we obtain
$\ep_{s+1}\dots\ep_{s+2^{p+1}}\in\{w_p\overline{w_p},\overline{w_p}w_p,
w_{p+1}, \overline{w_{p+1}}\}$.

If $\ep_{s+1}\dots\ep_{s+2^{p+1}}=w_{p+1}$ or $\overline{w_{p+1}}$,
substituting $k=s$ and $n=p+1$ in Lemma \ref{Lemma:next 2^n terms},
we can determine the next $2^{p+1}$ terms as above. Otherwise, using
that $\ep_{s+2^p}=\lambda_{2^p}$ or $\overline{\lambda_{2^p}}$, and
then substituting $k=s+2^p$ and $n=p$ in Lemma \ref{Lemma:next 2^n
terms} we can determine the next $2^p$ terms. This procedure can be
continued.

Note that $\ep_{s+1}\ep_{s+2}\dots$ can not have block $w_{n^*+1}$,
otherwise, it follows from (P8) that for some $\ell\ge s$, either
\begin{equation*}
\ep_{\ell+1}\ep_{\ell+2}\dots\ge
(w_{n^*+1}\overline{w_{n^*+1}})^\N\ge(\delta_\ell)_{\ell=1}^\N
\end{equation*} with $\ep_\ell<N\le 2N-2$, or
\begin{equation*}
 \overline{\ep_{\ell+1}\ep_{\ell+2}\dots}\ge
(w_{n^*+1}\overline{w_{n^*+1}})^\N\ge(\delta_\ell)_{\ell=1}^\N
\end{equation*}
with $\ep_\ell>N-2\ge0$. This is in contradiction with
(\ref{tunique_shift}).

Therefore, $(\ep_\ell)_{\ell=1}^\N$ must be eventually periodic
either with period block $N-1$ or with period block
$w_n\overline{w_n}$ for some $n\le n^*$.

(Ib) $\ep_j=N$. Let $s=j-1$ in (Ia) and then the result follows by
the same argument.

\emph{Case II. $(N-1)^\N<(\delta_\ell)_{\ell=1}^\N\le
(w_0\overline{w_0})^\N$.}

We conclude in this case that $\ep_{j+1}\ep_{j+2}\dots=(N-1)^\N$.
Otherwise, there exists a $s\ge j$ such that $\ep_{s+1}=w_0$ or
$\overline{w_0}$. Thus by the same argument as in Case I, we obtain
for some integer $\ell\ge s$ that either $
\ep_{\ell+1}\ep_{\ell+2}\dots\ge(w_0\overline{w_0})^\N\ge(\delta_\ell)_{\ell=1}^\N$
with $ \ep_\ell<2N-2$, or
$\overline{\ep_{\ell+1}\ep_{\ell+2}\dots}\ge(w_0\overline{w_0})^\N\ge(\delta_\ell)_{\ell=1}^\N$
with $\ep_\ell>0$,
 leading to a contradiction with (\ref{tunique_shift}).
\end{proof}

 Lemma \ref{lemma:eventually periodic} yields Part (3) of Theorem
\ref{th:critical point} directly. Let $\mathcal{G}$ be the set of
sequences in $\OTN^\N$ which are eventually periodic with period
block $N-1$ or $w_n\overline{w_n}$ for some integer $n\ge 0$. Then
the set $\mathcal{G}$ is countable. When $\beta=\beta_c$, it follows
from Lemma \ref{Lemma:next 2^n terms} and the proof of Lemma
\ref{lemma:eventually periodic} that
 $
 \ps^{-1}(\mathcal{U}_{\beta_c,2N-1}\setminus\{0,2\})\setminus\mathcal{G}
 $
 is included in the set of sequences of the form
\begin{equation*}
\tau(w_0\overline{w_0})^{k_0}(w_0\overline{w_{i_1'}})^{k_0'}(w_{i_1}\overline{w_{i_1}})^{k_1}(w_{i_1}\overline{w_{i_2'}})^{k_1'}
 \dots(w_{i_n}\overline{w_{i_n}})^{k_n}(w_{i_n}\overline{w_{i_{n+1}'}})^{k_n'}\dots,
\end{equation*}
 where $\tau\in\bigcup_{k=0}^\N\OTN^k, ~k_n\in\mathbb{N}\cup\{0\},~k_n'\in\{0,1\}$ and $0<i_1'\le i_1< i_2'\le i_2<\dots \le i_n<i_{n+1}'\le i_{n+1}<\dots$,
 together with their reflections. Thus, since the length of the block $w_n$ is growing exponentially, $\dim_H\mathcal{U}_{\beta_c,2N-1}=0$ (cf.~\cite{Froberg, Glendinning Sidorov}). Note that $\ps^{-1}(\mathcal{U}_{\beta_c,2N-1})$ contains the
set of sequences of the form
\begin{equation*}
(w_0\overline{w_0})^{k_0}\dots(w_n\overline{w_n})^{k_n}\dots,\quad
k_n\in\mathbb{N},
\end{equation*}
 and the fact that
$w_n\overline{w_n}$ can not be written as concatenation of two or
more blocks of the form $w_\ell\overline{w_\ell}$ with $\ell<n$.
Therefore, $|\mathcal{U}_{\beta_c,2N-1}|=2^{\aleph_0}$ which yields
Part (2), and so finishes the proof of Theorem \ref{th:critical
point}.

\section{The critical point for $\SPN$}
In this section we show that there exist infinitely many $t\in\SPN$,
i.e., there exist infinitely many $t\in[-1,1]$ having a unique
$\OTN$-code and making the intersection
$\Gamma_{\beta,N}\cap(\Gamma_{\beta,N}+t)$ a self-similar set.
Moreover, we find the critical point $\alpha_c$ for $\SPN$, i.e.,
the set $\SPN$ has positive Hausdorff dimension if
$\beta\in(1/(2N-1),\alpha_c)$, and contains countably infinite many
elements if $\beta\in[\alpha_c, 1/N)$. We are able to prove that
$\alpha_c$ is strictly smaller than $\beta_c$, the critical point of
$\UPN$ which is the set of $t\in[-1,1]$ having a unique $\OPN$-code.

In order to using techniques from beta-expansions, we consider the
set $\STN=\SPN+1$. Thus it follows from Theorem \ref{th:2a} that for
$\beta\in(1/(2N-1),1/N)$,
\begin{equation*}
\STN=\left\{\ps((\ep_\ell)_{\ell=1}^\N)\in\UTN:(N-1-|\ep_\ell-N+1|)_{\ell=1}^\N~\mbox{is
strongly periodic}\right\}.
\end{equation*}
Let $\Psi$ be a map from $\OTN$ to $\ON$ defined by
\begin{equation*}
\Psi(\ep)=N-1-|\ep-N+1|,
\end{equation*} then $\Psi$ induces a map
on blocks (for $\xi=\xi_1\dots\xi_k\in\OTN^k$ we let
$\Psi(\xi)=\Psi(\xi_1)\dots\Psi(\xi_k)$), and a map $\Psi_\N:
~\OTN^\N~\rightarrow~\ON^\N$ given by
$\Psi_\N((\ep_\ell)_{\ell=1}^\N)=(\Psi(\ep_\ell))_{\ell=1}^\N$. Then
$\STN$ can be rewritten as
\begin{equation}\label{eq:STN}
\STN=\UTN\cap\ps\big(\bigcup\limits_{\textbf{c}}\Psi_\N^{-1}(\textbf{c})\big),
\end{equation}
where the union is taken over all strongly periodic sequences
$\textbf{c}=(c_\ell)_{\ell=1}^\N\in\ON^\N$.
\begin{theorem}\label{th:critical point S}
  Given $N\ge 2$ and $\beta\in(1/(2N-1),1/N)$, let $\Gamma_{\beta,N}$ be the $N$-part homogeneous Cantor set,
   and $\SPN$ be the set of $t\in[-1,1]$ having a unique
  $\OPN$-code and making the intersection $\Gamma_{\beta,N}\cap(\Gamma_{\beta,N}+t)$ a self-similar set. Denote
   $\alpha_c:=[N+1-\sqrt{(N-1)(N+3)}\,]/2$. Then

   \rm{(1)} If $\beta\in(1/(2N-1),\alpha_c)$, $\dim_H\SPN>0$;

    \rm{(2)} If $\beta\in[\alpha_c,1/N)$, $|\SPN|=\aleph_0$.
\end{theorem}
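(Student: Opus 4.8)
The plan is to pass to $\STN=\SPN+1$ and use the description (\ref{eq:STN}), so that $t\in\STN$ becomes the statement that the unique code $(\ep_\ell)\in\ps^{-1}(\UTN)$ of $t$ has a strongly periodic image $\Psi_\N((\ep_\ell))$. The first step is to identify the critical sequence. I claim the quasi-greedy $\alpha_c$-expansion of $1$ with digit set $\OTN$ is $(\delta_\ell(\alpha_c))_{\ell=1}^\N=N(N-1)^\N$: this sequence is infinite and each of its shifts $(N-1)^\N$ is $<N(N-1)^\N$, so by Proposition \ref{prop: quasi greedy expansion} it is the quasi-greedy expansion of $1$ for the base solving $1=N\beta+(N-1)\beta^2/(1-\beta)$, and this equation simplifies to $\beta^2-(N+1)\beta+1=0$, whose root in $(1/(2N-1),1/N)$ is exactly $\alpha_c$. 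By Proposition \ref{prop: strictly decreasing of quasi expansion}, $(\delta_\ell(\beta))_{\ell=1}^\N>N(N-1)^\N$ exactly when $\beta<\alpha_c$, and $\le N(N-1)^\N$ exactly when $\beta\ge\alpha_c$.

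For (1), fix $\beta\in(1/(2N-1),\alpha_c)$ and abbreviate $(\delta_\ell)=(\delta_\ell(\beta))$. Since the purely periodic sequences $(N(N-1)^p)^\N$ decrease strictly to $N(N-1)^\N$ as $p\to\N$, and $(\delta_\ell)>N(N-1)^\N$, there is a finite $p$ with $(N(N-1)^p)^\N<(\delta_\ell)$. Take the strongly periodic sequence $\mathbf c=((N-2)(N-1)^p)^\N\in\ON^\N$. Its fibre is the Cantor set $\Psi_\N^{-1}(\mathbf c)=\{b_1(N-1)^p b_2(N-1)^p\cdots:b_i\in\{N-2,N\}\}$, which is reflection invariant (reflection fixes $(N-1)^p$ and swaps $N-2\leftrightarrow N$). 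Every tail and every reflected tail of a sequence in this fibre is lexicographically $\le(N(N-1)^p)^\N<(\delta_\ell)$, so by Theorem \ref{th:tunique_shift} the entire fibre lies in $\ps^{-1}(\UTN)$. Hence $\ps(\Psi_\N^{-1}(\mathbf c))\subseteq\STN$ is a self-similar set generated by two similarities of ratio $\beta^{p+1}$ satisfying the strong separation condition, which gives $\dim_H\SPN=\dim_H\STN\ge\log 2/(-(p+1)\log\beta)>0$.

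For (2), fix $\beta\in[\alpha_c,1/N)$, so $(\delta_\ell)\le N(N-1)^\N$ and, since $\alpha_c>1/(N+1)$, $\delta_1=N$. It suffices to prove that every valid code is eventually periodic, for then $\STN$ is contained in the countable image of the eventually periodic sequences, whence $|\SPN|\le\aleph_0$; the reverse inequality is clear from the infinitely many admissible eventually periodic codes. For $N\ge3$ the argument is short, in the spirit of Lemma \ref{lemma:eventually periodic}: if $\ep_k$ is non-extreme (that is $1\le\ep_k\le2N-3$), then the two inequalities of (\ref{tunique_shift}) together with $\delta_1=N$ give $\overline{\ep_{k+1}}\le N$ and $\ep_{k+1}\le N$, i.e. $N-2\le\ep_{k+1}\le N$, so $\ep_{k+1}\in\{N-2,N-1,N\}$ is again non-extreme. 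Thus once a non-extreme digit occurs all later digits lie in $\{N-2,N-1,N\}$, and using $\delta_2\delta_3\cdots\le(N-1)^\N$ one checks that the values at the free coordinates (where $\ep_\ell\in\{N-2,N\}$) must strictly alternate, so the eventually periodic pattern of $\mathbf c$ forces $(\ep_\ell)$ to be eventually periodic. If instead the digits are eventually extreme, then $\ep_\ell\in\{0,2N-2\}$ eventually and $\delta_1=N<2N-2$ forces this tail to be constant. The case $N=2$ is handled by comparing directly with $(\delta_\ell)\le21^\N$, showing that the admissible $\{0,2\}$-codes form a subshift whose only infinite paths are eventually periodic.

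The main obstacle is precisely part (2) for $N=2$ and, more generally, ruling out zero-entropy but aperiodic (Sturmian-type) codes: there the bound $(\delta_\ell)\le N(N-1)^\N$ must be exploited in full, not merely through its first digit, and one must verify for every strongly periodic $\mathbf c$ — including those whose period contains coordinates with $c_\ell<N-2$, which are pinned to small values by the reflected inequality and so cannot reintroduce freedom — that no fibre produces a continuum. Confirming that $N-2$ is the critical level, so that the fibre used in (1) is extremal and $N(N-1)^\N$ is the genuine threshold sequence, is what ties the two halves together and pins the critical point to exactly $\alpha_c$; the remaining ingredients (the dimension formula, reflection invariance, and the reduction to codes) are routine given the earlier sections.
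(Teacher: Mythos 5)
Your identification of $\alpha_c$ (root of $\beta^2-(N+1)\beta+1=0$, with quasi-greedy expansion $N(N-1)^\N$) and your proof of Part (1) are correct and essentially the paper's: your fibre $\big\{b_1(N-1)^pb_2(N-1)^p\cdots:b_i\in\{N-2,N\}\big\}$ is exactly the set $\prod_1^\N\{v_n,\overline{v_n}\}$ with $v_n=N(N-1)^{n-1}$ used in Lemma \ref{lemma:5_1}, your lexicographic choice of $p$ replacing the paper's appeal to $\alpha_n\uparrow\alpha_c$. For Part (2) with $N\ge 3$ your outline also matches the paper: confinement of the tail to $\{N-2,N-1,N\}$, strict alternation of the values at the free coordinates (this ``one checks'' is precisely the paper's forbidden-block property (P9) and Lemma \ref{lemma:5_3}), and then eventual periodicity forced by the periodicity of $\mathbf{c}$, giving countability as in Lemma \ref{lemma:5_4}. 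Your side worry about period blocks containing digits $c_\ell<N-2$ is harmless: by your own first step, such a fibre meets $\ps^{-1}(\UTN)$ only in codes with constant extreme tails.

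The genuine gap is the case $N=2$ of Part (2), which you dispatch in one sentence and yourself flag as the main obstacle. For $N=2$ the mechanism you propose --- both inequalities of (\ref{tunique_shift}) applying at every position of the tail, hence strict alternation at the free coordinates --- is not merely unproven but false: now $N=2N-2$ and $N-2=0$ are the extreme digits, so at a free coordinate only one of the two inequalities is in force. Concretely, $2^\N\in\ps^{-1}(\STN)$ (every reflected tail is $0^\N$, which is smaller than $\delta_1\delta_2\dots$, and $\Psi_\N(2^\N)=0^\N$ is strongly periodic), yet all of its free coordinates equal $2$; likewise, for suitable $\beta\in[\alpha_c,1/2)$ the codes $0^j(20)^\N$ are valid, so non-alternating and non-constant patterns both occur. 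Thus eventual periodicity for $N=2$ must be obtained by a different route, and this is exactly what the paper's machinery is engineered for: (P9) forbids $\tau N(N-1)^kN$ only when the \emph{preceding} digit $\tau$ lies in $\{N-2,N-1\}$ (so it remains valid when $N$ and $N-2$ are extreme), Lemma \ref{lemma:5_3} applies it to blocks beginning with $N-1$ after rotating the period of $\mathbf{c}$, and Case I of Lemma \ref{lemma:5_4} explicitly admits the non-alternating tails $N^\N$ and $(N-2)^\N$. Without an argument of this kind, your proof establishes Part (2) only for $N\ge 3$.
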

Since $\STN=\SPN+1$, we only need to consider the corresponding
conclusions for $\STN$. A simple computation yields that $\alpha_c$
satisfies the equation
\begin{equation*}
1=N \alpha_c+\sum_{j=2}^\N (N-1)\alpha_c^j.
\end{equation*}
Then it follows by Proposition \ref{prop: quasi greedy expansion}
that $
(\delta_\ell(\alpha_c))_{\ell=1}^\N=N(N-1)^\N=\lambda_1\lambda_2^\N
$ is the quasi-greedy $\alpha_c$-expansion of $1$. It follows from
Proposition \ref{prop: strictly decreasing of quasi expansion} and
\begin{equation*}
(\delta_\ell(\alpha_c))_{\ell=1}^\N=\lambda_1\lambda_2^\N>(\lambda_\ell)_{
\ell=1}^\N=(\delta_\ell(\beta_c))_{\ell=1}^\N
\end{equation*} that
$\alpha_c<\beta_c$. The proof of Theorem \ref{th:critical point S}
will be divided into several lemmas.

\begin{lemma}\label{lemma:5_1}
  Given $N\ge 2$ and $n\in\mathbb{N}$, let $\alpha_n$ be defined by
  $(\delta_\ell(\alpha_n))_{\ell=1}^\N=(N(N-1)^{n-1})^\N$. If
  $\beta<\alpha_n$, then $\dim_H\STN>0$.
\end{lemma}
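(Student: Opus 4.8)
The plan is to exhibit, for $\beta<\alpha_n$, a positive-dimensional self-similar subset of $\STN$ built from a single block and its reflection. First I would record the consequence of the hypothesis: by the definition of $\alpha_n$ through $(\delta_\ell(\alpha_n))_{\ell=1}^\N=\big(N(N-1)^{n-1}\big)^\N$ and the strict monotonicity of $\beta\mapsto(\delta_\ell(\beta))_{\ell=1}^\N$ (Proposition \ref{prop: strictly decreasing of quasi expansion}), the quasi-greedy $\beta$-expansion of $1$ with digit set $\OTN$ satisfies $(\delta_\ell(\beta))_{\ell=1}^\N>\big(N(N-1)^{n-1}\big)^\N$ in the lexicographical order.

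Next I would fix the block $B:=N(N-1)^{n-1}\in\OTN^{n}$ together with its reflection $\overline{B}=(N-2)(N-1)^{n-1}$. The key structural observation is that $\Psi$ is reflection-invariant, $\Psi(\ep)=\Psi(\overline{\ep})$, so that $\Psi(B)=\Psi(\overline{B})=(N-2)(N-1)^{n-1}$. Consequently, for \emph{every} sequence $(\ep_\ell)_{\ell=1}^\N$ obtained by freely concatenating copies of $B$ and $\overline{B}$, the image $\Psi_\N\big((\ep_\ell)_{\ell=1}^\N\big)=\big((N-2)(N-1)^{n-1}\big)^\N$ is trivially strongly periodic (take $I=J=(N-2)(N-1)^{n-1}$). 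It then remains to check that each such $(\ep_\ell)_{\ell=1}^\N$ lies in $\ps^{-1}(\UTN)$, so that $\ps$ of the full shift on $\{B,\overline{B}\}$ is contained in $\STN$ by (\ref{eq:STN}).

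The main work is verifying the admissibility condition (\ref{tunique_shift}) of Theorem \ref{th:tunique_shift}. I would show that for any $\{B,\overline{B}\}$-concatenation and every $k\ge0$ one has both $\ep_{k+1}\ep_{k+2}\dots\le\big(N(N-1)^{n-1}\big)^\N$ and $\overline{\ep_{k+1}\ep_{k+2}\dots}\le\big(N(N-1)^{n-1}\big)^\N$. This splits into tails beginning at a block boundary (where the leading symbol is $N$ or $N-2$, and a matched prefix $N(N-1)^{n-1}$ can only be continued by another $N$ or else broken at the next boundary by an $N-2<N$) and interior tails (which begin with $N-1<N$). Since reflection merely interchanges $B$ and $\overline{B}$ and fixes the symbols $N-1$, while the only symbol reflecting to $N$ is the boundary symbol $N-2$, the reflected tails are governed by the same dichotomy. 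Combined with the monotonicity inequality above, both tails are then strictly below $(\delta_\ell(\beta))_{\ell=1}^\N$, so (\ref{tunique_shift}) holds unconditionally. The only genuinely delicate point is this lexicographic bookkeeping for the reflected tails; the reflection-invariance of $\Psi$ is exactly what lets one block serve both the strong-periodicity requirement and the symmetric admissibility conditions.

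Finally, $\ps$ maps $\{B,\overline{B}\}^\N$ onto the attractor of the IFS $\{x\mapsto\beta^{n}x+c_B,\ x\mapsto\beta^{n}x+c_{\overline{B}}\}$ with $c_B=\ps(B0^\N)\ne\ps(\overline{B}0^\N)=c_{\overline{B}}$. Since every point of this set has a unique $\OTN$-code (being in $\UTN$), the two level-one cylinders are disjoint, so the IFS satisfies the strong separation condition; hence $\dim_H\STN\ge\log 2/(-n\log\beta)>0$, which is the claim.
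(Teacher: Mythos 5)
Your proposal is correct and follows essentially the same route as the paper: the paper also takes the block $v_n=N(N-1)^{n-1}$ and its reflection $\overline{v_n}=(N-2)(N-1)^{n-1}$, verifies via Proposition \ref{prop: strictly decreasing of quasi expansion} and Theorem \ref{th:tunique_shift} that every $\{v_n,\overline{v_n}\}$-concatenation lies in $\ps^{-1}(\UTN)$, notes $\Psi(v_n)=\Psi(\overline{v_n})$ so all such sequences map to the strongly periodic sequence $((N-2)(N-1)^{n-1})^\N$, and concludes $\dim_H\STN\ge\dim_H\ps\big(\prod_1^\N\{v_n,\overline{v_n}\}\big)>0$. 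Your only additions are presentational: you check the lexicographic conditions on infinite tails rather than on shifted finite blocks, and you justify the final positive-dimension claim (which the paper merely asserts) by observing that uniqueness of codes forces the strong separation condition for the two-map IFS with ratio $\beta^n$.
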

\begin{proof}
  Let $v_n=N(N-1)^{n-1}$ and
  $\overline{v_n}=(N-2)(N-1)^{n-1}$ be its reflection. It follows from $\beta<\alpha_n$ and Proposition \ref{prop: strictly decreasing of quasi expansion}
  that $(\delta_\ell(\beta))_{\ell=1}^\N>(\delta_\ell(\alpha_n))_{\ell=1}^\N=(N(N-1)^{n-1})^\N,$ which implies that for any $k\in\{0,1,\dots,n-1\}$
 \begin{eqnarray*}
      &&\sigma^k(v_n v_n)\le \delta_1(\alpha_n)\dots\delta_{2n-k}(\alpha_n),\quad
      \sigma^k(\overline{v_n} v_n)< \delta_1(\alpha_n)\dots\delta_{2n-k}(\alpha_n),\\
&&\sigma^k(v_n
\overline{v_n})<\delta_1(\alpha_n)\dots\delta_{2n-k}(\alpha_n),\quad
\sigma^k(\overline{v_n v_n})<
\delta_1(\alpha_n)\dots\delta_{2n-k}(\alpha_n).
  \end{eqnarray*}
  Thus by Theorem \ref{th:tunique_shift} we obtain that
\begin{equation*}
\prod_1^\N\{v_n,\overline{v_n}\}\subseteq\ps^{-1}(\mathcal{U}_{\beta,2N-1}).
\end{equation*}
Since $\Psi(v_n)=(N-2)(N-1)^{n-1}=\Psi(\overline{v_n})$, it is easy
to see that
\begin{equation*}
\prod_1^\N\{v_n,\overline{v_n}\}\subseteq\Psi^{-1}_\N\big(((N-2)(N-1)^{n-1})^\N\big).
\end{equation*}
Thus noting that $((N-2)(N-1)^{n-1})^\N$ is obviously a strongly
periodic sequence in $\ON^\N$, it follows from (\ref{eq:STN}) that
\begin{equation*}
\prod_1^\N\{v_n,\overline{v_n}\}\subseteq\ps^{-1}(\UTN)\cap\Psi^{-1}_\N\big(((N-2)(N-1)^{n-1})^\N\big)\subseteq\ps^{-1}(\STN)
\end{equation*}
which implies
$\dim_H\STN\ge\dim_H\ps(\prod_1^\N\{v_n,\overline{v_n}\})>0$.
\end{proof}
Since $(\delta_\ell(\alpha_n))_{\ell=1}^\N=(N (N-1)^{n-1})^\N$
decreases to $N(N-1)^\N=(\delta_\ell(\alpha_c))_{\ell=1}^\N$ in the
sense of lexicographical order as $n\rightarrow \N$, we obtain from
Proposition \ref{prop: strictly decreasing of quasi expansion} that
$\alpha_n$ increases to $\alpha_c$. Thus for each $\beta<\alpha_c$,
there exists some $n\in\mathbb{N}$ such that $\beta<\alpha_n$ and
then $\dim_H\STN>0$ by Lemma \ref{lemma:5_1}.
This finishes the proof of Part (1) of
Theorem \ref{th:critical point S}.

In the following we will show Part (2). For
$\beta\in[\alpha_c,1/N)$, it follows by Proposition \ref{prop:
strictly decreasing of quasi expansion} that
$(\delta_\ell(\beta))_{\ell=1}^\N\le(\delta_\ell(\alpha_c))_{\ell=1}^\N=N(N-1)^\N$,
which together with Theorem \ref{th:tunique_shift} imply the
following property:

\textbf{(P9)} For $N\ge 2$ and $\beta\in[\alpha_c,1/N)$, any block
in $\mathcal{F}$ is forbidden in $\ps^{-1}(\UTN)$ where
\begin{equation*}
\mathcal{F}=\bigcup_{k=0}^\N\bigcup_{\tau=N-2}^{N-1}\{\tau N(N-1)^k
N, ~\overline{\tau}(N-2)(N-1)^k(N-2)\}.
\end{equation*}
For a positive integer $n$, let $\mathcal{B}_n$ be the set of blocks
of length $n$ occurring in elements of $\ps^{-1}(\UTN)$, i.e.,
\begin{equation*}
\mathcal{B}_n:=\big\{\ep_{i+1}\ep_{i+2}\dots\ep_{i+n}:~ i\ge
0,~~(\ep_\ell)_{\ell=1}^\N\in\ps^{-1}(\UTN)\big\}.
\end{equation*}

\begin{lemma}\label{lemma:5_3}
  Given $N\ge 2$ and $\beta\in[\alpha_c,1/N)$, let~ ${\rm{\bf b}}=b_1\dots b_p\in\{N-2,N-1\}^p$ with $b_1=N-1$ for some $p\in\mathbb{N}$.
  Then $\Psi^{-1}({\rm{\bf b}})\cap\mathcal{B}_p=\{(N-1)^p\}$ or $\{\xi,\overline{\xi}\}$ for some $\xi\in\{N-2,N-1,N\}^p$.
\end{lemma}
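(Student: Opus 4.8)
The plan is to first describe the fibres of $\Psi$ and then use property (P9) to cut the preimage down to at most one reflection pair. From $\Psi(\ep)=N-1-|\ep-(N-1)|$ we read off $\Psi^{-1}(N-1)=\{N-1\}$ and $\Psi^{-1}(N-2)=\{N-2,N\}$; hence every $\xi\in\Psi^{-1}({\bf b})$ lies in $\{N-2,N-1,N\}^p$, is forced to equal $N-1$ wherever $b_i=N-1$, and is free to take either value in $\{N-2,N\}$ at each \emph{special} position $i_j$ (where $b_{i_j}=N-2$), say $i_1<\dots<i_m$. Such a $\xi$ is determined by its special values $c_j:=\xi_{i_j}$, and the reflection $\overline{\xi}$ (which fixes $N-1$ and swaps $N-2\leftrightarrow N$) simply flips every $c_j$. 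If $m=0$, i.e. ${\bf b}=(N-1)^p$, the fibre is $\{(N-1)^p\}$, and $(N-1)^p\in\mathcal{B}_p$ because $(N-1)^\N\in\ps^{-1}(\UTN)$ (every suffix and its reflection equal $(N-1)^\N<\delta_1\delta_2\dots$, using $\delta_1\delta_2\dots>(N-1)^\N$ for $\beta<1/N$); this yields the first alternative.

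Now assume $m\ge 1$. I would use that $\ps^{-1}(\UTN)$ is reflection invariant (as observed in the proof of Lemma \ref{lemma:eventually periodic}), so $\mathcal{B}_p$ is too, and therefore preimages in $\mathcal{B}_p$ occur in pairs $\{\xi,\overline{\xi}\}$ with $\overline{\xi}\neq\xi$. The heart of the argument is to show that the special values of any $\xi\in\Psi^{-1}({\bf b})\cap\mathcal{B}_p$ must strictly alternate between $N$ and $N-2$. Since $b_1=N-1$, the block starts with $N-1$, so $i_1\ge 2$ and the symbol preceding any special position is either an $N-1$ or an earlier special, in any case $\ge N-2$.

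To forbid two equal consecutive specials I invoke (P9). If $c_j=c_{j+1}=N$ then, as the positions strictly between them all carry $N-1$, the block $\tau N(N-1)^aN$ with $a\ge 0$ and $\tau\ge N-2$ occurs; this lies in $\mathcal{F}$ and is forbidden. If $c_j=c_{j+1}=N-2$ I pass to the \emph{leftmost} equal consecutive $N-2$ pair: its left neighbour is an $N-1$ (whenever a gap precedes it) or, by minimality, a special equal to $N$, hence $\ge N-1$, so the occurring block $\overline{\tau}(N-2)(N-1)^a(N-2)$ with $\overline{\tau}\ge N-1$ again lies in $\mathcal{F}$ and is forbidden. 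Consequently $c_1,\dots,c_m$ alternate, leaving precisely the two alternating fillings, which are reflections of one another. Together with reflection invariance this gives $\Psi^{-1}({\bf b})\cap\mathcal{B}_p\subseteq\{\xi,\overline{\xi}\}$, with equality as soon as one member is realised in $\mathcal{B}_p$.

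The main obstacle is exactly this realisability, needed to upgrade the inclusion to the asserted equality. By reflection invariance it suffices to embed one alternating filling as a sub-block of an infinite sequence satisfying (\ref{tunique_shift}), i.e. to choose a surrounding context so that every suffix, and every reflected suffix, stays below $\delta_1\delta_2\dots$ lexicographically. Padding $\xi$ with long runs of $N-1$ instantly settles all suffixes beginning with a symbol $\le N-1$, which are $<\delta_1\delta_2\dots$ because $\delta_1=N$ throughout $[\alpha_c,1/N)$ (indeed $(N-1)^\N<\delta_1\delta_2\dots\le N(N-1)^\N$ forces $\delta_1=N$). The delicate case is a suffix opening at an interior $N$, of the form $N(N-1)^a(N-2)\dots$; here one must compare the gap $a$ produced by ${\bf b}$ with the initial block of $N-1$'s in $\delta_1\delta_2\dots$, and close the estimate using Theorem \ref{th:tunique_shift} together with $\delta_1\delta_2\dots\le\lambda_1\lambda_2^\N=N(N-1)^\N$ (the bound guaranteed by $\beta\ge\alpha_c$). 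I expect this comparison to be the crux of the proof.
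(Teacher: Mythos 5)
Your core argument --- computing the fibres $\Psi^{-1}(N-1)=\{N-1\}$, $\Psi^{-1}(N-2)=\{N-2,N\}$ and using (P9) to force the special values to alternate between $N$ and $N-2$ --- is exactly the paper's proof; the paper merely organizes the alternation left-to-right rather than by excluding equal consecutive pairs. That organizational difference is where your version has a concrete slip: in the case $c_j=c_{j+1}=N$ you assert that the block $\tau N(N-1)^aN$ with $\tau\ge N-2$ lies in $\mathcal{F}$, but $\mathcal{F}$ contains such blocks only for $\tau\in\{N-2,N-1\}$. The bad case $\tau=N$ occurs precisely when the symbol at position $i_j-1$ is itself a special carrying $N$. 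For $N\ge 3$ the block is still forbidden by the lexicographic computation behind (P9) (since then $N\le 2N-3$), but for $N=2$ the block $2\,2\,1^a2$ violates nothing in (\ref{tunique_shift}): the leading $2=2N-2$ triggers only the reflected condition, whose comparison sequence starts with $0$. So as written that step fails for $N=2$. The repair is immediate and is already in your text: apply the same leftmost-equal-pair device you use for the $(N-2)$-pairs, or do as the paper does and determine the specials sequentially from $k_1$ onward --- since $b_1=N-1$ forces $k_1>1$, the predecessor of $k_1$ is $N-1$, and thereafter each predecessor is either $N-1$ (a gap) or the previously determined special, so it always lies in the range that $\mathcal{F}$ requires.

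Concerning the step you call the crux: the realizability needed to upgrade the inclusion $\Psi^{-1}({\bf b})\cap\mathcal{B}_p\subseteq\{\xi,\overline{\xi}\}$ to equality is not proved in the paper either. The paper's proof establishes only the inclusion (the alternation argument) and then simply asserts equality; and the inclusion is all that is ever used, since in Lemma \ref{lemma:5_4} this lemma enters only as an upper bound
\begin{equation*}
\ps^{-1}(\UTN)\cap\Psi_\N^{-1}(\textbf{c})\subseteq\ps^{-1}(\UTN)\cap\Big(\Psi^{-1}(a_1\dots a_q b_1\dots b_{k-1})\prod_1^\N\{\xi,\overline{\xi}\}\Big),
\end{equation*}
which is what yields countability. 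So the padding/embedding argument you sketched but did not complete is not a gap relative to the paper: once the $\tau=N$ slip is repaired, what you have written already contains everything the paper's own proof does (indeed slightly more, since you verify $(N-1)^p\in\mathcal{B}_p$ in the first alternative, which the paper takes for granted).
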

\begin{proof}
 Let
$\xi=\xi_1\dots \xi_p\in\Psi^{-1}(\textbf{b})\cap\mathcal{B}_p$.
Then it follows from $\textbf{b}\in\{N-2,N-1\}^p$ and the definition
of $\Psi$ that $\xi\in\{N-2,N-1,N\}^p$. Note that
$\Psi^{-1}(N-1)=\{N-1\}$ and $\Psi^{-1}(N-2)=\{N-2,N\}$.

(I) $\textbf{b}=(N-1)^p$. Then
$\Psi^{-1}(\textbf{b})\cap\mathcal{B}_p=\{(N-1)^p\}.$

(II) $\textbf{b}\ne(N-1)^p$. Let $b_{k_1}=b_{k_2}=\dots=b_{k_s}=N-2$
for $1<k_1<k_2<\dots<k_s\le p$, and $b_k=N-1$ for $k\ne k_i$. Then
also $\xi_k=N-1$ for $k\ne k_i$.
 Moreover, if $\xi_{k_1}=N$, then it
follows from (P9) that $\xi_{k_2}=N-2, ~\xi_{k_3}=N, ~\xi_{k_4}=N-2$
and so on. Similarly, if $\xi_{k_1}=N-2$ we will obtain by (P9) that
$\xi_{k_2}=N, ~\xi_{k_3}=N-2,~\xi_{k_4}=N$ and so on. Thus,
$\Psi^{-1}(\textbf{b})\cap\mathcal{B}_p=\{\xi,\overline{\xi}\}$.
\end{proof}

\begin{lemma}\label{lemma:5_4}
  Given $N\ge 2$ and $\beta\in[\alpha_c,1/N)$, let
  $\textbf{c}=(c_\ell)_{\ell=1}^\N\in\ON^\N$ be a strongly periodic sequence.
  Then $\ps^{-1}(\UTN)\cap\Psi_\N^{-1}(\textbf{c})$
  is at most countable.
\end{lemma}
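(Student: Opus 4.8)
The plan is to exploit that in the regime $\beta\in[\alpha_c,1/N)$ we have $(\delta_\ell)_{\ell=1}^\N\le N(N-1)^\N$, so the admissibility condition (\ref{tunique_shift}) is extremely restrictive; I want to show it rigidifies any preimage to the point that it is determined by finitely much initial data, whence $\ps^{-1}(\UTN)\cap\Psi_\N^{-1}(\textbf{c})$ is at most countable (in fact finite). Write the strongly periodic sequence as $\textbf{c}=IJ^\N$ with $I\preccurlyeq J$ and $I,J\in\ON^q$. The finite prefix $I$ can be completed in only finitely many ways, so it suffices to bound the number of admissible preimages of the periodic tail $J^\N$.

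First I would dispose of digits $c_\ell\le N-3$. Since $\Psi^{-1}(\{N-2,N-1\})=\{N-2,N-1,N\}$ while $\Psi^{-1}(c_\ell)=\{c_\ell,2N-2-c_\ell\}$, a digit $c_\ell\le N-3$ forces $\ep_\ell\le N-3$ or $\ep_\ell\ge N+1$. Feeding such an extreme entry back into (\ref{tunique_shift}) together with $(\delta_\ell)_{\ell=1}^\N\le N(N-1)^\N$ gives $\ep_{\ell-1}=0$ in the first case (otherwise $\overline{\ep_\ell\ep_{\ell+1}\dots}<\delta$ would force $\ep_\ell\ge N-2$) and $\ep_{\ell-1}=2N-2$ in the second; for $N\ge3$ these values are themselves extreme, so iterating shows the initial segment $\ep_1\dots\ep_{\ell-1}$ is the constant word $0^{\ell-1}$ or $(2N-2)^{\ell-1}$, hence $c_1\dots c_{\ell-1}=0^{\ell-1}$. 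Thus a small digit recurring in the period $J$ is impossible unless $\textbf{c}=0^\N$, and the same computation shows $\ps^{-1}(\UTN)\cap\Psi_\N^{-1}(0^\N)=\{0^\N,(2N-2)^\N\}$. For $N=2$ this reduction is vacuous. We may therefore assume $J\in\{N-2,N-1\}^q$, any small digits of $\textbf{c}$ being confined to a bounded initial block that contributes only finitely many completions.

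On the reduced alphabet the positions with $c_\ell=N-1$ force $\ep_\ell=N-1$, while each \emph{special} position $c_\ell=N-2$ leaves the binary choice $\ep_\ell\in\{N-2,N\}$, because $\Psi(N-2)=\Psi(N)=N-2$. The heart of the proof is that these choices are coupled: consecutive special positions must carry opposite values. This is exactly the content of the forbidden set $\mathcal{F}$ in (P9) --- a block $N(N-1)^kN$ preceded by a digit in $\{N-2,N-1\}$, or its reflection $(N-2)(N-1)^k(N-2)$ preceded by a digit in $\{N-1,N\}$, cannot occur in $\ps^{-1}(\UTN)$ --- so that once the value of the first special position is fixed, the pattern $N,N-2,N,N-2,\dots$ propagates across every intervening run of $(N-1)$'s. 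Equivalently, rotating a period so that it begins with $N-1$ and invoking Lemma \ref{lemma:5_3}, each period block of a preimage equals a fixed word $\xi$ or its reflection $\overline{\xi}$, and the junction condition forces consecutive periods to follow the same alternation; hence the whole tail is pinned down by the single choice at its first special position, giving at most two preimages of $J^\N$. Combined with the finitely many completions of the initial block, this yields $|\ps^{-1}(\UTN)\cap\Psi_\N^{-1}(\textbf{c})|<\N$.

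The main obstacle is precisely this alternation step. The blocks in $\mathcal{F}$ cover the generic configuration in which a special position is preceded by an $(N-1)$, but they do not by themselves exclude two \emph{adjacent} special positions carrying the same value, nor the analogous pattern occurring at the very start of the sequence; these boundary cases have to be ruled out by substituting the relevant tails directly into (\ref{tunique_shift}) and using strictly that $\delta_1\delta_2\dots\le N(N-1)^\N$ (for instance, $\ep_{p}=\ep_{p+1}=N$ at the start forces $\ep_{p+2}\ep_{p+3}\dots<(N-1)^\N$, which propagates a forced drop). The degenerate sub-cases $J=(N-1)^q$ (one forced preimage) and $J\in\{N-2\}^q$ (two alternating preimages) should be checked separately, and the reflection-invariance of $\ps^{-1}(\UTN)$ recorded in the proof of Lemma \ref{lemma:eventually periodic} lets one pass freely between a preimage and its reflection, completing the count.
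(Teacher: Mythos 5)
Your proposal is correct in substance and takes essentially the same route as the paper's proof: eliminate digits below $N-2$ from the periodic part, rotate the period so it begins with $N-1$ and invoke Lemma \ref{lemma:5_3} to pin each period block to $\{\xi,\overline{\xi}\}$, then use (P9) to propagate the alternation of the special values, treating the degenerate periods $(N-1)^q$ and $(N-2)^q$ separately --- exactly the paper's Case I / Case II structure (your direct small-digit argument replaces the paper's appeal to the argument of Lemma \ref{lemma:eventually periodic}, but serves the identical purpose). Your only inaccuracy, the count of ``at most two'' preimages of the tail, is harmless and is in fact shared by the paper's own Case I: since the blocks of (P9) need a preceding digit, bounded initial configurations such as $(2N-2)\,N\,N\,((N-2)N)^{\N}\in\ps^{-1}(\mathcal{U}_{\alpha_c,2N-1})\cap\Psi_\N^{-1}\big(0(N-2)^\N\big)$ genuinely occur, but the direct substitution into (\ref{tunique_shift}) that you yourself flag shows any run of equal special values has length at most two, so the intersection remains finite up to such initial segments, a fortiori countable.
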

\begin{proof}
Note by $\beta\ge \alpha_c$ that
$(\delta_\ell(\beta))_{\ell=1}^\N\le(\delta_\ell(\alpha_c))_{\ell=1}^\N=N(N-1)^\N$.
Thus for any sequence $(\ep_\ell)_{\ell=1}^\N\in\ps^{-1}(\UTN)$,
 we obtain by the same argument as in Lemma \ref{lemma:eventually periodic} that
  \begin{equation*}
\ep_k\ep_{k+1}\dots\in\prod_1^\N\{N-2,N-1,N\}
  \end{equation*}
  for some large $k\in\mathbb{N}$, which implies that
  $\Psi_\N(\ep_k\ep_{k+1}\dots)\in\{N-2,N-1\}^\N$. Let $\textbf{c}=a_1\dots a_q(b_1\dots
  b_q)^\N$ with $a_\ell\le b_\ell,~1\le \ell\le q$ be a strongly
  periodic sequence in $\ON^\N$ such that
  $\ps^{-1}(\UTN)\cap\Psi^{-1}_\N(\textbf{c})\ne\emptyset$. Then
\begin{equation*}
b_1\dots b_q\in\{N-2,N-1\}^q.
\end{equation*}

  Case I. $b_1\dots b_q=(N-2)^q$. It follows from
  $\Psi^{-1}(N-2)=\{N-2,N\}$ that $\Psi_\N^{-1}(\textbf{c})\subseteq\Psi^{-1}(a_1\dots
  a_q)\{N-2,N\}^\N$. Note by (P9) (with $\tau=N-2, k=0$) that blocks $N(N-2)^2$ and
  $(N-2)N^2$ are forbidden in $\ps^{-1}(\UTN)$. Thus
\begin{equation*}
\ps^{-1}(\UTN)\cap\Psi^{-1}_\N(\textbf{c})\subseteq\Psi^{-1}(a_1\dots
a_q)\{N^\N,(N(N-2))^\N,((N-2)N)^\N,(N-2)^\N\}
\end{equation*}
which is at most countable.

 Case II. $b_1\dots b_q\ne (N-2)^q$. Then there exists  $b_k=N-1$ for some $k\in\{1,\dots,q\}$.
 Note that
\begin{equation*}
\textbf{c}=a_1\dots a_q(b_1\dots b_q)^\N=a_1\dots a_q b_1\dots
b_{k-1}(b_k\dots b_q b_1\dots b_{k-1})^\N.
\end{equation*}
 It follows from Lemma \ref{lemma:5_3} that there exists a $q$-block
 $\xi=\xi_1\dots\xi_q\in\{N-2,N-1,N\}^q$ such that $\Psi^{-1}(b_k\dots b_q b_1\dots
 b_{k-1})\cap\mathcal{B}_q=\{\xi,\overline{\xi}\}$. Thus
\begin{equation*}
\ps^{-1}(\UTN)\cap\Psi_\N^{-1}(\textbf{c})\subseteq\ps^{-1}(\UTN)\cap\Big(\Psi^{-1}(a_1\dots
a_q b_1\dots b_{k-1})\prod_1^\N\{\xi,\overline{\xi}\}\Big).
\end{equation*}
  Note that since $\Psi^{-1}(\textbf{c})$ and $\ps^{-1}(\UTN)$ are all reflection invariant, $\ps^{-1}(\UTN)\cap\Psi_\N^{-1}(\textbf{c})$
  is also reflection invariant. Thus we only need to consider the
  following three cases.

(IIa) $\xi=(N-1)^q$. Then $\prod_1^\N\{\xi,\overline{\xi}\}$
collapses to a single point $(N-1)^\N$.

  (IIb) $\xi=(N-1)^{\ell}N\xi_{\ell+2}\dots\xi_{q-r-1} N(N-1)^{r}$ with $\ell\ge 1, r\ge
  0$ and $\ell+r\le q-1$ (note that $\xi=(N-1)^{\ell}N(N-1)^{r}$ if $\ell+r=q-1$). It follows by (P9) that
  blocks $\xi\xi$ and $\overline{\xi}\overline{\xi}$ are forbidden
  in $\ps^{-1}(\UTN)\cap\Psi_\N^{-1}(\textbf{c})$. Thus $\prod_1^\N\{\xi,\overline{\xi}\}$ collapses to two points
  $(\xi\overline{\xi})^\N$ and $(\overline{\xi}\xi)^\N$.

  (IIc) $\xi=(N-1)^{\ell}N\xi_{\ell+2}\dots\xi_{q-r-1}(N-2)(N-1)^{r}$ with $\ell\ge 1, r\ge
  0$ and $\ell+r\le q-2$.  By the same argument as in (IIb) we also obtain
  that $\ps^{-1}(\UTN)\cap\Psi^{-1}(\textbf{c})$ is at most countable.
 \end{proof}
It follows from Lemma \ref{lemma:5_4} and (\ref{eq:STN}) that for
$\beta\in[\alpha_c,1/N)$, the set
\begin{equation*}
\ps^{-1}(\STN)=\ps^{-1}(\UTN)\cap\bigcup_{\textbf{c}}\Psi_\N^{-1}(\textbf{c})
=\bigcup_{\textbf{c}}\left(\ps^{-1}(\UTN)\cap\Psi_\N^{-1}(\textbf{c})\right)
\end{equation*}
is at most countable since the union on the right is countable.
Note that for $\beta\in[\alpha_c,1/N)$, $\{0^q (N-1)^\N:\!
q\in\mathbb{N}\}\subseteq\ps^{-1}(\STN).$ This gives Part (2),
finishing the proof of Theorem \ref{th:critical point S}.

\section{Final remarks}
In this paper we determined the size of two types of sets $\UPN$,
and $\SPN$, where $\UPN$ is the set of $t\in\Gamma_{\beta,N}-\Gamma_{\beta,N}$ having a unique
$\OPN$-code and $\SPN$ is the set of $t$ not only having a unique
code but also making the intersection
$\Gamma_{\beta,N}\cap(\Gamma_{\beta,N}+t)$ a self-similar set. It
follows from \cite{Sidorov} that  for $\beta\in(1/(2N-1),1/N)$ there
also exist a lot of $t\in\Gamma_{\beta,N}-\Gamma_{\beta,N}=[-1,1]$
having exactly $p$ different $\OPN$-codes for any integer $p\ge 2$.
Let
$$
\mathcal{F}_{\beta,\pm
N}^{(p)}:=\{t\in\Gamma_{\beta,N}-\Gamma_{\beta,N}: t ~\textrm{has
exactly}~p~\textrm{different}~ \OPN\textrm{-codes}\},
$$
and
$$
\mathcal{S}_{\beta,\pm N}^{(p)}:=\{t\in\mathcal{F}_{\beta,\pm N}^{(p)}: \Gamma_{\beta,N}\cap(\Gamma_{\beta,N}+t)~\textrm{ is a self-similar set}\}.
$$

\textbf{Problem}. How large is the set $\mathcal{F}_{\beta,\pm
N}^{(p)}$ for a given positive integer $p\ge 2$? How to characterize
this set? This is also an open problem for beta-expansions. Moreover, how large is the set $\mathcal{S}_{\beta,\pm N}^{(p)}$?

\end{document}